\documentclass[12pt]{amsart}
\usepackage[margin=1in]{geometry}
\usepackage[T1]{fontenc}
\usepackage{lmodern}
\usepackage{amsmath,amssymb,amsthm,mathtools,microtype}
\usepackage{enumitem}
\usepackage[hidelinks,hypertexnames=false]{hyperref}
\newtheorem{theorem}{Theorem}[section]
\newtheorem{classical}{Theorem}

\newtheorem{lemma}[theorem]{Lemma}
\newtheorem{corollary}[theorem]{Corollary}
\newtheorem{proposition}[theorem]{Proposition}
\theoremstyle{definition}
\newtheorem{definition}{Definition}[section]
\newtheorem{example}[theorem]{Example}
\theoremstyle{remark}
\newtheorem{remark}[theorem]{Remark}
\newcommand{\R}{\mathbb R}
\newcommand{\Sym}{\operatorname{Sym}}
\newcommand{\tr}{\operatorname{tr}}
\newcommand{\diag}{\operatorname{diag}}
\newcommand{\II}{\mathrm{II}}
\newcommand{\Pplus}{\mathcal P^+_{\lambda,\Lambda}}
\newcommand{\Pminus}{\mathcal P^-_{\lambda,\Lambda}}
\newcommand{\norm}[1]{\lVert#1\rVert}

\title[Nirenberg's Theorem and Traceless Symmetric Matrices]{Nirenberg's Theorem and Singular Solutions from Traceless Symmetric Matrices}

\author[B.Z. Chu]{BaoZhi Chu}
\address[B.Z. Chu]{Department of Mathematics, Northwestern University,
2033 Sheridan Road, Evanston, IL 60208}
\email{bzchu@northwestern.edu}

\begin{document}

\begin{abstract}
    A classical theorem of Nirenberg gives interior $C^{2,\alpha}$ regularity for continuous viscosity solutions of fully nonlinear uniformly elliptic equations $F(D^2u)=0$ in dimension two. In dimensions five and higher, the work of Nadirashvili–Tkachev–Vlăduţ shows that such solutions need not be $C^2$. In this paper, we construct a family of $C^{1,1}\setminus C^2$ functions $\{w_m\}_{m\geq 3}$ and uniformly elliptic operators $F_m$ such that $F_m(D^2 w_m)=0$ in 
    $\mathbb{R}^{d_m}$, where
    $d_m=m(m+1)/2-1$. These solutions are given by $\operatorname{tr}(X^3)/\sqrt{\operatorname{tr}(X^2)}$ on the space of traceless real symmetric $m\times m$ matrices. When $m=3,4$, this family recovers the known singular solutions associated with the five-dimensional Cartan isoparametric cubic and the nine-dimensional Hsiang minimal cubic, respectively. Our construction treats all $m\geq3$ in a unified way by analyzing the
Hessian directly on the space of traceless symmetric matrices.
    
We also prove a geometric criterion for pointwise $C^{2,\beta}$
regularity of continuous viscosity solutions $u$ of uniformly elliptic
equations $F(D^2u)=0$ in every dimension $n\geq3$.
Suppose that $n-2$ smooth hypersurfaces pass through a point $p$, each carrying a smooth vector field along which the directional derivative of $u$ is constant on that hypersurface. If these vector fields are linearly independent at $p$, then $u$ is twice differentiable at $p$ and admits a pointwise $C^{2,\beta}$ expansion for some $\beta\in (0,1)$.
Our criterion is quantitative and yields interior $C^{2,\beta}$
regularity under uniform geometric hypotheses.
As an application, we extend Nirenberg's theorem to a general
class of solutions in dimensions three and higher.
As a consequence, we establish interior $C^{2,\beta}$ regularity
for solutions of Dirichlet problems with certain group symmetries.
This extends a result of
Nadirashvili--Vl\u{a}du\c{t} for axially symmetric problems.
\end{abstract}

\maketitle

\section{Introduction}
We denote by $\Sym(n)$ the space of real
symmetric $n\times n$ matrices. 
Let $F:\Sym(n)\to \R$ be a \emph{uniformly elliptic} operator. Namely, there exist $\Lambda\ge \lambda>0$ such that
\begin{equation}\label{eq:ellipticity}
\lambda\tr N\leq F(M+N)-F(M)\leq\Lambda\tr N\quad
\text{for every } M\in\Sym(n)\text{ and every }N\geq 0. 
\end{equation}

A classical theorem of
Nirenberg in the 1950s gives interior $C^{2,\alpha}$ regularity for solutions of
uniformly elliptic equations in two dimensions. We begin with its
modern form. Throughout the paper, we use $B_r(p)$ for a
Euclidean ball centered at $p$ of radius $r$ and write $B_r=B_r(0)$. 

\begin{classical}[Nirenberg \cite{Nirenberg}]\label{thm:nirenberg}
Let $F:\Sym(2)\to\R$ satisfy \eqref{eq:ellipticity}, and let
$u$ be a continuous viscosity solution of $F(D^2u)=0$ in
$B_1\subset\R^2$. There exist positive constants $\alpha$ and $C$,
depending only on $\lambda,\Lambda$, such that $u\in C^{2,\alpha}_{\mathrm{loc}}(B_1)$ and
$\norm{u}_{C^{2,\alpha}(B_{1/2})}
 \leq C\norm{u}_{L^\infty(B_1)}$.
\end{classical}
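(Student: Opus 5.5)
The plan is to prove a priori $C^{2,\alpha}$ bounds for smooth solutions first, and then pass to viscosity solutions by approximation.

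\textbf{Reduction to smooth data.} Mollify $F$ to get $F_\varepsilon$. This preserves the ellipticity constants $\lambda,\Lambda$ in \eqref{eq:ellipticity}. Solve the Dirichlet problem $F_\varepsilon(D^2u_\varepsilon)=0$ in $B_{3/4}$ with boundary data $u$. Smooth solvability in two dimensions uses the same a priori estimates by the method of continuity, after smoothing the boundary data. Krylov--Safonov gives uniform interior $C^{\gamma}$ bounds. Barrier arguments give uniform moduli of continuity up to the boundary. Hence $u_\varepsilon\to u$ locally uniformly, and the limit solves $F(D^2u)=0$ by stability of viscosity solutions. Uniqueness holds through the comparison principle for $F$ independent of $x$. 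So it suffices to prove $\|u\|_{C^{2,\alpha}(B_{1/2})}\le C\|u\|_{L^\infty(B_1)}$ for smooth $u$ and smooth $F$ with constants depending only on $\lambda,\Lambda$. Covering and scaling let us work on nested balls.

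\textbf{Derivative equations.} Fix a unit direction $e$ and differentiate the equation. Then $v=\partial_e u$ solves the linear equation $a^{ij}(x)\partial_{ij}v=0$ with $a^{ij}=F_{ij}(D^2u)$, which satisfies $\lambda I\le (a^{ij})\le\Lambda I$. Krylov--Safonov Hölder estimates for this equation are not enough to reach $D^2u$ directly.

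\textbf{Key step: Hölder estimate for $D^2u$ in 2D.} This is the step I expect to be the main obstacle; it is where dimension two is used. Write $w=\partial_{11}u$ and $z=\partial_{12}u$. Dividing the differentiated equation $a^{ij}\partial_{ij}(\partial_1u)=0$ by $a^{22}$ gives a system with two scalar coefficients,
\[
\partial_1 w+ b\,\partial_2 w+ c\,\partial_2 z=0,\qquad \partial_1 z-\partial_2 w=0,
\]
where $b=2a^{12}/a^{22}$ and $c=a^{11}/a^{22}$ are bounded by ellipticity. The second equation is the compatibility condition $\partial_1\partial_{12}u=\partial_2\partial_{11}u$. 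This is a first-order elliptic system, so the complex function $f=w+i\,\mu z$ (suitably normalized) satisfies a Beltrami-type equation $f_{\bar z}=q_1 f_z+q_2\overline{f_z}$ with $|q_1|+|q_2|\le k<1$, where $k$ depends only on $\lambda/\Lambda$. This is Bers--Nirenberg quasiconformal theory. The quasiconformal Hölder estimate (Morrey/Bers--Nirenberg) gives
\[
\|D^2u\|_{C^\alpha(B_{1/2})}\le C\|D^2u\|_{L^2(B_{3/4})}.
\]
If the complex normalization is awkward, I will instead use Morrey's lemma directly, in the form of a Caccioppoli inequality plus the two-dimensional isoperimetric/Fourier argument on circles. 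Here $\alpha$ and $C$ depend only on $\lambda,\Lambda$.

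\textbf{Closing the estimate.} Bound $\|D^2u\|_{L^2}$ by $\|u\|_{L^\infty}$. One route is the Cordes/Talenti estimate, valid in 2D for any ellipticity ratio, for $\sum a^{ij}\partial_{ij}u=0$. It gives $\|D^2u\|_{L^2(B_{3/4})}\le C\|u\|_{W^{1,2}(B_1)}$, which is then controlled by $\|u\|_{L^\infty}$ via Caccioppoli. Alternatively, use the $W^{2,\varepsilon}$/Lin estimate together with interpolation. Combining with the key step yields $\|u\|_{C^{2,\alpha}(B_{1/2})}\le C\|u\|_{L^\infty(B_1)}$. Passing to the limit in $\varepsilon$ by Arzelà--Ascoli transfers the bound to $u$.
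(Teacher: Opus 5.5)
The paper does not prove this statement. It quotes it from \cite{Nirenberg} and uses it only as a black box in Lemma~\ref{lem:planar}, so there is no in-paper argument to compare with. Your outline is the classical argument behind that citation, and it is sound: differentiate the equation, recognize the gradient of $\partial_e u$ as a quasiregular map via the two-dimensional Bers--Nirenberg/Morrey theory, get a Dirichlet-growth H\"older bound for $D^2u$, then pass to viscosity solutions by approximation. Your ``suitable normalization'' can be made explicit. With $v=\partial_1u$ and $f:=\partial_z v=\tfrac12(u_{11}-iu_{12})$, the equation $a^{ij}v_{ij}=0$ becomes $f_{\bar z}=q_1f_z+q_2\overline{f_z}$ with $|q_1|+|q_2|\le(\Lambda-\lambda)/(\Lambda+\lambda)$. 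So $\mu=-1$ works, and the same computation with $\partial_2u$ covers $u_{22}$.

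A few steps need tightening.

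First, the method of continuity for the smooth Dirichlet problem on $B_{3/4}$ needs \emph{global} $C^{2,\alpha}$ bounds up to $\partial B_{3/4}$, uniform along a path such as $tF_\varepsilon+(1-t)\tr$. Your interior estimate does not supply these. In two dimensions the boundary estimates and the resulting classical solvability are standard (see \cite[Ch.~17]{GT}), but they are a separate ingredient you should cite, not something ``the same a priori estimates'' give.

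Second, dividing $a^{11}\partial_1w+2a^{12}\partial_2w+a^{22}\partial_2z=0$ by $a^{22}$ does not give your displayed system. Divide by $a^{11}$ instead and set $b=2a^{12}/a^{11}$, $c=a^{22}/a^{11}$.

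Third, in the closing step $u$ itself solves $a^{ij}u_{ij}=-F(0)$ with $a^{ij}=\int_0^1F_{ij}(tD^2u)\,dt$. The Talenti/Cordes bound therefore carries an extra term $|F(0)|$. This is harmless, since $|F(0)|\le C\norm u_{L^\infty(B_1)}$ as in \eqref{eq:zero-bound}. Also, a Caccioppoli inequality is not directly available for $u$ in nondivergence form. Control $\norm{Du}_{L^2}$ by the interior gradient ($C^{1,\alpha}$) estimate \cite{Caffarelli1995FullyNE}, or by interpolation with absorption. Your $W^{2,\varepsilon}$ alternative would similarly need a higher-integrability property of quasiregular maps, not plain interpolation.

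With these repairs the argument gives the stated estimate, with $\alpha$ and $C$ depending only on $\lambda,\Lambda$.
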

Without additional assumptions, Nirenberg's theorem cannot be simply extended to general higher dimensions.
This is shown by the work of
Nadirashvili--Tkachev--Vl\u{a}du\c{t} \cite{NTV, NTV-book}, which constructs a homogeneous $C^{1,1}\setminus C^2$ solution with an isolated singularity in dimension five. 
If we assume additionally that $F$ is convex or concave, then the Evans--Krylov theorem \cite{Evans-82, Krylov-82, Krylov-83} gives $u\in C^{2,\alpha}_{\mathrm{loc}}$ in any dimension; see also \cite{Caffarelli1995FullyNE, CS, GT}. Some extensions of the Evans--Krylov theorem 
are established by Cabr\'e--Caffarelli \cite{CC-jmpa} and
Caffarelli--Yuan \cite{CY}.

\subsection{Singular solutions from traceless symmetric matrices}\label{sec-1-1}
The first result in this paper extends the singular solution in \cite{NTV} to a family of singular solutions $\{w_m\}$ in dimensions
$$d_m:=m(m+1)/2-1,\quad m= 3,4,\dots.$$
The first few dimensions in this family are $5,9,14,20,27,35,\ldots$.

Let $\Sym_0(m)$ be the space of traceless real symmetric $m\times m$ matrices, endowed with the Frobenius inner product $\langle X,Y\rangle:=\tr(XY)$
and its norm
$\norm X_F:=\sqrt{\tr(X^2)}$.
Clearly, $\dim\Sym_0(m)=d_m$.
Define a linear isometry $\iota_m: \R^{d_m}\to \Sym_0(m)$ by 
$$ \textstyle
 \iota_m(x):=\sum_{i=1}^{d_m}x_i E_i,\quad x\in \R^{d_m},
$$
where 
$E_1,\dots,E_{d_m}$ is a fixed orthonormal basis of $\Sym_0(m)$. 
Consider
$$
 W_m(X):=
 {\tr(X^3)}/{\norm X_F},
 \quad X\in\Sym_0(m),
$$
where $W_m(0)=0$. Our first main result is as follows. 
\begin{theorem}\label{thm:trace-cubic}
For every $m\geq3$, there exists
$F_m:\Sym(d_m)\to\R$ satisfying \eqref{eq:ellipticity} with
$\lambda=1$ and $\Lambda=1+(m+3)(d_m+3)$
such that $w_m:=W_m\circ\iota_m$ is a viscosity solution of
$$
F_m(D^2w_m)=0
\quad\text{in }\R^{d_m}.
$$
The solution $w_m$ belongs to $C^{1,1}(\R^{d_m})$,
is homogeneous of degree two, and is real analytic in
$\R^{d_m}\setminus\{0\}$, but is not twice differentiable
at $0$.
\end{theorem}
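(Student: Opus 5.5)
Since $\iota_m$ is a linear isometry, it suffices to work directly on $\Sym_0(m)$ with the Frobenius structure: Hessians of $W_m$ are symmetric operators on $\Sym_0(m)$, and conjugating by $\iota_m$ transfers everything to $\Sym(d_m)$ without changing eigenvalues. The regularity claims are routine. $W_m$ is homogeneous of degree two and real analytic away from $0$, since $\norm X_F$ is analytic and nonvanishing there. Homogeneity then gives $|D^2W_m(X)|\le C$ on $X\neq0$ (the Hessian is homogeneous of degree zero), so $W_m\in C^{1,1}$, using that $DW_m$ is homogeneous of degree one and continuous across $0$. For non-twice-differentiability at $0$, note that a degree-two homogeneous function twice differentiable at $0$ must be a quadratic form. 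But $W_m$ is odd ($W_m(-X)=-W_m(X)$), while a quadratic form is even, so it would vanish identically. This is false: take $X=\diag(2,-1,-1,0,\dots)$, where $\tr X^3=6\ne0$.

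The main task is constructing $F_m$. I would use the standard NTV criterion. It suffices to show that for any two points $X\neq Y$ (both nonzero, say), the Hessians $A=D^2W(X)$ and $B=D^2W(Y)$ satisfy
\[
\lambda\,\tr (A-B)^+\;<\;\Lambda\,\tr (A-B)^-\quad\text{and symmetrically},
\]
or more precisely the ``hyperbolicity'' condition: $A-B$ is never in the cone $\{N:\lambda\tr N^+>\Lambda \tr N^-\}$ and its negative. Equivalently, the set $\mathcal H=\{D^2W(X)\}$ is $(\lambda,\Lambda)$-admissible. Then $F_m$ can be defined as the $(\lambda,\Lambda)$-extension via a Pucci-type envelope, for instance
\[
F(M)=\inf_{H\in\mathcal H}\mathcal P^+(M-H) \quad(\text{or a sup/inf construction}).
\]
This $F$ is uniformly elliptic, vanishes on $\mathcal H$, and makes $w_m$ a classical solution away from $0$. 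At $0$ no $C^2$ test function touches $w_m$ from above or below, because $w_m$ is odd and degree-two homogeneous. More concretely, a touching paraboloid would force $w_m\le Q$ and $-w_m(-x)\le Q(x)$, and this is handled exactly as in NTV. So $w_m$ is a viscosity solution on all of $\R^{d_m}$. A convenient sufficient condition, following NTV, is the inequality
\[
\lambda\,\tr(A-B)^+\le \Lambda\,\tr(A-B)^- \text{ and vice versa}, \qquad \text{i.e. } \frac{\tr(A-B)^+}{\tr(A-B)^-}\le \frac{\Lambda}{\lambda} \text{ whenever } A\neq B.
\]
A standard way to verify it is to bound $|\tr(A-B)|$ against $\norm{A-B}$ in trace norm. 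Since $\tr(A-B)=\tr(A-B)^+-\tr(A-B)^-$, it suffices to show $|\tr(A-B)|\le \theta\,\|A-B\|_{1}$ with $\theta<1$, which gives ratio $\le (1+\theta)/(1-\theta)$.

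The key computation is the Hessian of $W=P/r$ with $P=\tr X^3$ and $r=\norm X_F$. With the projection $\pi(H)=H-\tfrac{\tr H}{m}I$, we have $DP=3\pi(X^2)$ and $D^2P[H]=3\pi(XH+HX)$. Expanding $D^2W$ by the quotient rule gives $\tr D^2W(X)=\Delta P/r+\dots$ terms. Since $\Delta P=\tr_{\Sym_0}(H\mapsto 3\pi(XH+HX))=3(m+2)\tr X-\ldots=0$ up to the traceless correction, the trace of $D^2W$ reduces to a multiple of $P/r^3$: Euler's identity gives $\langle X,DP\rangle=3P$, and $\Delta(r^{-1})$ is explicit. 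So $\tr D^2W(X)=c_m\,W(X)/r^2$ with $c_m=(d_m-3)\cdot(\text{const})-6+\dots$, an explicit constant. By homogeneity I would normalize $r=1$. The hard step is the lower bound
\[
\|D^2W(X)-D^2W(Y)\|_1\;\ge\;\kappa\,|W(X)-W(Y)|
\]
with $\kappa$ large, over unit $X,Y$, and then also comparing different radii. I would attack it by testing $D^2W(X)-D^2W(Y)$ against suitable directions. Pairing with the rank-one or low-rank test operators $H\mapsto\langle X,H\rangle X$ and $H\mapsto\langle Y,H\rangle Y$ extracts the values $D^2W(X)[X,X]=2W(X)$ and $D^2W(Y)[X,X]$, whose difference controls $W(X)-W(Y)$ via a Taylor/convexity-type identity. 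The explicit $\Lambda=1+(m+3)(d_m+3)$ suggests the bounds are $|c_m|\lesssim d_m+3$ together with an operator-norm estimate $\lesssim m+3$ coming from $\|\pi(XH+HX)\|_F\le 2\|X\|_{op}\|H\|_F$. I expect this comparison, uniform in $m$ and handling $X,Y$ at different scales, to be the main obstacle.
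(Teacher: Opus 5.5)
Your regularity and non-twice-differentiability arguments match the paper's, and so does the overall architecture: the harmonicity of $\tr(X^3)$ on $\Sym_0(m)$ gives $\operatorname{Tr}H(X)=-(d_m+3)\tr(X^3)$ for $\norm X_F=1$, where $H(X)$ is the Hessian of $W_m$ at $X$ viewed as an operator on $\Sym_0(m)$. The problem then reduces to a pairwise condition on $\mathcal K=\{H(X):\norm X_F=1\}$. Your envelope $\inf_{T\in\mathcal K}\mathcal P^+_{1,1+\kappa}(M-T)$ would work as well as the paper's $\min_T\{\operatorname{Tr}(M-T)+\kappa\lambda_{\max}(M-T)\}$.

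\textbf{Main gap.} The pairwise estimate, which is the whole content of the theorem, is not proved, and the route you suggest for it cannot work. Write $N:=H(X)-H(Y)$ and $p(X)=\tr(X^3)$. Then $\norm N_1=|\operatorname{Tr}N|+2\min\{\operatorname{Tr}N^+,\operatorname{Tr}N^-\}$ with $|\operatorname{Tr}N|=(d_m+3)|p(X)-p(Y)|$. So what you need is one-sided: if $p(X)>p(Y)$, then $N$ must have a positive eigenvalue of size $\gtrsim p(X)-p(Y)$.

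Testing on $X$ or $Y$ cannot detect this. By the Euler identities $D^2W(X)[X,\cdot]=DW(X)$ and $D^2W(X)[X,X]=2W(X)$,
\[
\langle NY,Y\rangle=-2\bigl(W(Y)-W(X)-DW(X)[Y-X]-\tfrac12D^2W(X)[Y-X,Y-X]\bigr)=O\bigl(\norm{X-Y}_F^3\bigr),
\]
and $\langle NX,X\rangle$ behaves the same way. By contrast, $p(X)-p(Y)$ is generically of first order in $\norm{X-Y}_F$.

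The missing idea is the paper's Step 2, which runs as follows.
\begin{itemize}
\item Set $A=(X-Y)/2$ and $B=(X+Y)/2$. Then $\tr(AB)=0$, $\norm A_F^2+\norm B_F^2=1$, and $q:=\tfrac12(p(X)-p(Y))=3\tr(AB^2)+\tr(A^3)$.
\item Diagonalize $A$ with $a_1\ge\dots\ge a_m$ and put $s:=a_1+a_2\ge0$.
\item Prove the traceless-matrix inequalities $\tr(AC^2)\le s\norm C_F^2$ for all $C\in\Sym_0(m)$, and $\tr(AC^2)\le\frac{m-1}{m}s\norm C_F^2$ when $\tr(AC)=0$. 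Together they give $q\le\frac{3(m-1)}{m}s$.
\item Test $N$ on matrices supported in the upper-left $2\times2$ block of this eigenbasis: first $U_{12}=U_{21}=1/\sqrt2$, then a unit traceless $U$ with $\tr(BU)=0$. The first test alone suffices if $q\ge2s$. Otherwise a positive combination of the two eliminates $B_{12}^2$.
\end{itemize}
This yields $\lambda_{\max}(N)\ge 2q/(m+3)$, which is exactly what produces $\Lambda=1+(m+3)(d_m+3)$.

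\textbf{The origin.} Your claim that no $C^2$ function touches $w_m$ at $0$ is false. For example, $-C|x|^2$ touches from below whenever $C\ge\max_{|x|=1}|w_m|$. Oddness only restricts which Hessians can occur, and you still have to show $F_m(D^2\varphi(0))\le0$ for them. The paper uses that $w_m\in C^1$ with $Dw_m(0)=D\varphi(0)=0$. The function $w_m-\varphi+\delta|x|^2-\varepsilon e\cdot x$ then attains its minimum over $\overline{B_r}$ at some $y_\varepsilon$ with $0<|y_\varepsilon|<\varepsilon/\delta$. There $w_m$ is smooth, so ellipticity and the classical equation give $F_m(D^2\varphi(y_\varepsilon)-2\delta I)\le0$; then let $\varepsilon\to0$ and $\delta\to0$.

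\textbf{Different radii.} There is no separate difficulty with $X,Y$ at different radii. $D^2W_m$ is homogeneous of degree zero, so $\mathcal K$ already contains every Hessian and only unit $X,Y$ need to be compared.
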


Note that the ellipticity constants of $F_m$
are independent
of the choice of orthonormal basis of $\Sym_0(m)$, though the coordinate forms of $F_m$ and $w_m$
differ by an orthogonal transform. 

Clearly, Theorem \ref{thm:trace-cubic} cannot be extended to $m=2$,
since $d_2=2$ and its non-$C^2$ conclusion would contradict
Nirenberg's theorem. Indeed, $\tr(X^3)=0$ for every $X\in\Sym_0(2)$, so $W_2\equiv0$.

Theorem \ref{thm:trace-cubic} in the cases $m=3,4$ recovers known singular solutions whose defining cubics have classical geometric interpretations, as described below. 
To the best of our knowledge, the realization of the functions
$w_m$ as viscosity solutions of uniformly elliptic fully nonlinear
equations was not previously known for $m\geq5$.
We are not aware of an analogous geometric interpretation of their
defining cubics $\tr(X^3)$ for $m\geq 5$.

When $m=3$, after a suitable choice of orthonormal basis,
the cubic $\tr(X^3)$ on $\Sym_0(3)$ is a nonzero constant
multiple of the five-dimensional \emph{Cartan isoparametric cubic} \cite[p.190]{Cartan}
$$
 P(x):=x_1^3
 +\frac{3x_1}{2}(x_3^2+x_4^2-2x_5^2-2x_2^2)
 +\frac{3\sqrt3}{2}
 (x_2x_3^2-x_2x_4^2+2x_3x_4x_5).
$$
Consequently, up to an orthogonal change of variables and
multiplication by a nonzero constant, $w_3$ coincides with
the singular solution $P(x)/|x|$ constructed by
Nadirashvili--Tkachev--Vl\u{a}du\c{t}~\cite{NTV}.
The regular level sets of the Cartan cubic $P$ on $\mathbb S^4$
give, up to isometries of $\mathbb S^4$, all complete connected isoparametric
hypersurfaces with three distinct principal curvatures;
see \cite{Cartan}. The identification of $P$ with $\tr(X^3)$ and its relation to Jordan algebras are
discussed by Tkachev~\cite{Tkachev2014,Tkachev2019};
see also Appendix~\ref{app-sec-realizations}.

When $m=4$, the cubic $\tr(X^3)$ on $\Sym_0(4)$ is exactly $3$ times the nine-dimensional \emph{Hsiang minimal cubic}. Its zero level set is a minimal hypercone in $\R^9$; see \cite{Hsiang1967}.
In a suitable orthonormal basis,
the Hsiang cubic takes the coordinate form
$$
D(x)\coloneqq \det 
\begin{pmatrix}
    x_1 & x_2 & x_3 \\
    x_4 & x_5 & x_6 \\
    x_7 & x_8 & x_9 
\end{pmatrix}
$$
as observed in \cite{Tkachev2010}; see also
Appendix \ref{app-sec-realizations}.
Therefore, up to an orthogonal change of variables and
multiplication by a nonzero constant, $w_4$ coincides with
the known singular solution $D(x)/|x|$ in dimension nine.
Its realization as a viscosity solution of a uniformly elliptic fully
nonlinear equation is due to C. K. Smart (unpublished), as
recorded in
\cite[Remark~4.1.5]{NTV-book} and \cite[p.1008]{S}.

Our proof of Theorem \ref{thm:trace-cubic} is unified for all $m\ge 3$ by differentiating $W_m$
directly on $\Sym_0(m)$ and analyzing its Hessian there.
The basis-independent formula for $W_m$ allows us to avoid
lengthy coordinate calculations for the Hessian of $w_m$
on $\R^{d_m}$.
One main step is to prove
$$
(\Lambda-1)\lambda_{\max}\bigl(D^2W_m(X)-D^2W_m(Y)\bigr)
\geq
\left|\operatorname{Tr}\bigl(D^2W_m(X)-D^2W_m(Y)\bigr)\right|
$$
for all $X,Y\in\Sym_0(m)\setminus\{0\}$.
Here the Hessians are identified with self-adjoint operators
on $\Sym_0(m)$ through the Frobenius inner product, and
$\lambda_{\max}$ and $\operatorname{Tr}$ denote their largest
eigenvalue and operator trace, respectively.
We prove this estimate using elementary matrix inequalities,
with several estimates refined to obtain the stated explicit
value of $\Lambda$.
Once this estimate is established, the remaining steps, including the construction of $F_m$, are similar to those in \cite{Chu}.

\subsection{An extension of Nirenberg's theorem}
Consider, in dimensions $n\geq3$, the equation
\begin{equation}\label{eq:equation}
 F(D^2u)=0\quad\hbox{in }B_1\subset\R^n,
\end{equation}
where $F$ is only assumed to be uniformly elliptic.
It is well-known that
every continuous viscosity solution $u$ of \eqref{eq:equation} is $C^{1,\alpha}_{\mathrm{loc}}$ for some $\alpha>0$; see \cite{Caffarelli1995FullyNE, Trudinger-89}. 

The second aim of this paper is to give a geometric criterion for
the existence of a pointwise $C^{2,\beta}$ expansion of a solution $u$. Under
uniform geometric hypotheses, this criterion yields interior $C^{2,\beta}$ regularity.
To guarantee such an expansion at a point $p$, our criterion
requires $n-2$ hypersurfaces through $p$, each carrying a vector
field along which the directional derivative of $u$ is constant
on that hypersurface. These vector fields are
assumed to be linearly independent at $p$.

Our criterion is quantitative, so we first introduce some geometric
parameters. Throughout this paper, a $C^2$ \emph{hypersurface}
in $\R^n$ is a subset that, near each of its points and after
a rigid motion, is the graph of a $C^2$ function on an open
subset of $\R^{n-1}$.

\begin{definition}\label{def:admissible}
Let $S\subset \R^n$ be a $C^2$ hypersurface with
$p\in S$. A radius $\rho>0$ is \emph{admissible} for
$S$ at $p$ if $S\cap B_\rho(p)$ is relatively
closed in $B_\rho(p)$ and
$$ 
 \rho 
 \sup_{S \cap B_\rho(p)}\norm{\II_{S}}\leq 1.
$$
\end{definition}
Here $\II_S$ is the Euclidean second fundamental form of $S$;
its norm is the largest absolute principal curvature.
Clearly, an admissible radius $\rho$ always exists. An admissible radius is not unique; if $\rho$ is admissible, then every $\rho'\in (0,\rho)$ is also admissible. If $S$ is an affine
hyperplane, then every $\rho>0$ is admissible.

\begin{definition}\label{def:independence}
For unit vectors $\mu_1,\dots,\mu_{n-2}\in\R^n$, we define their
\emph{independence number} by
$$
\textstyle
 \delta(\mu_1,\dots,\mu_{n-2})
 \coloneqq\min \big\{               \left|\sum_{i=1}^{n-2}a_i \mu_i\right| :   a\in\R^{n-2},~|a|=1 \big\}.
$$
\end{definition}
This number lies in $[0,1]$ and is positive precisely when
the vectors are linearly independent. It equals $1$ when the vectors are
orthonormal.

Let $S$ be a $C^2$ hypersurface through point $p$.
For a $C^{1,\gamma}$ vector field $\nu$ along $S$, with
$0<\gamma\leq1$, we measure its variation in $B_\rho (p)$ by
\begin{equation}\label{eq:field-bound}
 \rho\norm{D_S\nu}_{L^\infty(S\cap B_\rho(p))}
 +\rho^{1+\gamma}[D_S\nu]_{\gamma;S\cap B_\rho(p)}\leq K.
\end{equation}
Here $D_S\nu$ is the derivative along $S$, and the bracket denotes
its H\"older (Lipschitz) seminorm; see \S\ref{sec:main-proof} for precise definitions.
If $\nu$ is 
constant, then \eqref{eq:field-bound} holds for $K=0$ and
$\gamma=1$.

\begin{theorem}\label{thm:main}
Let $n\geq3$, $0<\gamma\leq 1$, $K\geq 0$, and let
$F:\Sym(n)\to\R$ satisfy \eqref{eq:ellipticity}. Suppose $u$
is a continuous viscosity solution of \eqref{eq:equation}.
 For each $1\le i\le n-2$, let
$S_i\subset \R^n$ be a $C^2$ hypersurface 
with admissible radius $\rho\leq1/8$ at $0\in S_i$, and let
$\nu_i\in C^{1,\gamma}(S_i\cap B_\rho;\R^n)$ satisfy
\eqref{eq:field-bound} with $p=0$, $S=S_i$, and $\nu=\nu_i$.
Assume that $|\nu_i(0)|=1$, that
$\delta:=\delta(\nu_1(0),\dots,\nu_{n-2}(0))>0$, and that
$$
 \nu_i(x)\cdot Du(x)=\text{constant}
 \quad\text{on }S_i\cap B_\rho
 \quad\text{for every } 1\le i \le n-2.
$$
Then $Du$ is differentiable at $0$, 
\begin{equation}\label{eq:main-coefficients}
 |Du(0)|+\norm{D^2u(0)}\leq C\norm{u}_{L^\infty(B_1)}, \quad \text{and}
\end{equation}
\begin{equation}\label{eq:main-expansion}
 \left|u(x)-u(0)-Du(0)\cdot x- 2^{-1}x^TD^2u(0)x\right|
 \leq C \norm{u}_{L^\infty(B_1)}|x|^{2+\beta},
 \quad x\in B_{c\rho}.
\end{equation}
Here $\beta\in(0,\gamma)$ and $c>0$ depend only on
$n,\lambda,\Lambda,\gamma$; $C$ additionally depends on an upper
bound for $K$ and positive lower bounds for $\rho$ and $\delta$.
\end{theorem}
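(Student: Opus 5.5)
The plan is to turn each hypothesis $\nu_i\cdot Du=\text{const}$ on $S_i$ into second-order information on $D^2u(0)$ in the direction $\nu_i(0)$, and then recover the remaining two-dimensional part of the Hessian from a Nirenberg-type argument in the complementary plane. Normalize $\norm{u}_{L^\infty(B_1)}=1$. By the $C^{1,\alpha}$ theory for \eqref{eq:equation} \cite{Caffarelli1995FullyNE,Trudinger-89}, $u\in C^{1,\alpha}(B_{1/2})$ with universal bounds. Moreover, for each fixed unit vector $e$, the difference quotients of $u$ in direction $e$ lie in the class $S(\lambda/n,\Lambda)$. Hence $u_e$ is a viscosity solution of a linear equation with bounded measurable coefficients, in the Pucci sense $\Pminus(D^2u_e)\le 0\le\Pplus(D^2u_e)$.

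\textbf{Step 1: each $v_i:=\nu_i\cdot Du$ is $C^{1,\beta}$ at $0$.} Fix $i$ and write $e:=\nu_i(0)$.
\begin{enumerate}[label=(\roman*)]
\item Split $v_i=u_e+(\nu_i-e)\cdot Du$ near $S_i$. The second term is a perturbation of size $O(K|x|/\rho)$, controlled by \eqref{eq:field-bound} and the $C^{1,\alpha}$ bound.
\item Thus $u_e$ is in $S(\lambda/n,\Lambda)$ and equals, on $S_i$, a constant plus a $C^{1,\gamma}$-small error relative to the $C^2$ hypersurface. Admissibility of $\rho$ makes $S_i\cap B_\rho$ a uniformly $C^2$ graph.
\item Apply Krylov's boundary $C^{1,\beta}$ estimate for the class $S$ on each side of $S_i$ in $B_{\rho/2}$. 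This gives expansions $u_e(x)=u_e(0)+a_\pm\,(x\cdot N)+b\cdot x'+O(|x|^{1+\beta})$ on the two sides, where $N$ is the normal to $S_i$ at $0$.
\item Show $a_+=a_-$. A function with a genuine kink $a_+(x\cdot N)^+-a_-(x\cdot N)^-$, $a_+\ne a_-$, cannot lie in $S$: blowing up, a convex kink violates the supersolution inequality, and a concave kink violates the subsolution inequality.
\end{enumerate}
Hence $u_e$, and with it $Du\cdot\nu_i$ up to a controlled error, has a $C^{1,\beta}$ expansion at $0$. Consequently $D(Du\cdot e)(0)$ exists, so the row $D^2u(0)\nu_i(0)$ is defined. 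It is bounded in terms of $K$, $\rho$ and universal constants, with an expansion valid in $B_{c\rho}$.

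\textbf{Step 2: reduction to two dimensions.} Since $\delta>0$, the vectors $\nu_i(0)$ span an $(n-2)$-plane $V$, and their coefficients are controlled by $\delta^{-1}$. By symmetry, Step 1 determines $D^2u(0)$ on $V\times\R^n$, and we get $|D^2u(0)\xi|\le C(\delta,K,\rho)$ for $\xi\in V$. The unknown part is the $2\times2$ block on $V^\perp$. Subtract from $u$ the affine part and the quadratic $Q$ built from the known rows (vanishing on $V^\perp\times V^\perp$). The remainder $\tilde u$ then satisfies $\tilde F(D^2\tilde u)=0$ with $\tilde F(M)=F(M+D^2Q)$, still uniformly elliptic with the same constants. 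In addition, $D_\xi\tilde u=O(|x|^{1+\beta})$ for all $\xi\in V$.

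\textbf{Step 3: improvement of flatness via Nirenberg.} I would run a compactness iteration on dyadic scales $r_k=c\rho\,2^{-k}$. Suppose at scale $r$ the remainder is within $\varepsilon r^2$ of a quadratic depending only on $V^\perp$-variables, with its $V$-derivatives of size $r^{1+\beta}$. Rescaling and passing to limits, by stability of viscosity solutions, gives a solution of a uniformly elliptic equation in $\R^n$ whose gradient in the $V$-directions vanishes. Such a limit depends only on the two $V^\perp$-variables, so it solves a two-dimensional uniformly elliptic equation $G(D^2_{V^\perp}w)=0$. Theorem \ref{thm:nirenberg} then gives $C^{2,\alpha}$ bounds for the limit with universal $\alpha$. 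Choosing $\beta<\min(\alpha,\gamma,\text{Krylov exponent})$ yields the usual geometric decay. Summing the quadratic corrections produces $D^2u(0)$ together with \eqref{eq:main-coefficients} and \eqref{eq:main-expansion}.

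\textbf{Main obstacle.} I expect the difficulty to lie in Step 1 and in feeding its output into Step 3 quantitatively. The field $\nu_i$ varies and $S_i$ is curved, so the constraint is not a clean directional derivative of a solution vanishing on a plane. The errors from $D_S\nu_i$ and $\II_{S_i}$ must be absorbed with the right powers of $|x|$, which is where $\gamma$, $K$ and $\rho$ enter. The kink-exclusion argument must also be made quantitative at each scale rather than only in the blow-up limit. My first attempt would be a boundary Harnack / Krylov iteration on both sides simultaneously, treating $(\nu_i-e)\cdot Du$ as a right-hand side of size $r^{\gamma}$ at scale $r$.
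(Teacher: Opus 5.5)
Your proposal follows essentially the same route as the paper: your Step~1 is the paper's Step~1 combined with Lemma~\ref{lem:matching} (boundary Krylov estimates on both sides of the graph of $S_i$, a blow-up that excludes a kink, and compactness to pass from flat to curved graphs, after correcting $\partial_{\mu_i}u$ by the linear function $Du(0)\cdot D_{S_i}\nu_i(0)x$ so that the boundary data vanish to order $1+\alpha$), and your Steps~2--3 are the paper's integration in the $n-2$ directions followed by Lemma~\ref{lem:planar} (Caffarelli approximation with Nirenberg's theorem applied to the $t$-independent limit). Two points to tighten: first, the boundary data of $\partial_e u$ on $S_i$ are only \emph{pointwise} $C^{1,\alpha}$ at $0$ (the term $(\nu_i-e)\cdot Du$ is merely H\"older along $S_i$, but equals $Du(0)\cdot D_{S_i}\nu_i(0)x+O(|x|^{1+\alpha})$ at $0$), so you need the pointwise form of the boundary estimate, after which a single blow-up gives $a_+=a_-$ and no scale-by-scale kink argument is needed; second, differentiability of $Du$ at $0$ does not follow from the expansion of $u$ alone and requires the rescaled interior $C^{1,\alpha}$ estimate applied to $h=u-Q$, which solves $F(D^2Q+D^2h)=0$.
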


The vector field $\nu_i$ need
not be tangent or normal to the hypersurface $S_i$. The hypersurfaces
need not meet transversely and may even coincide.
For constant vector fields $\{\nu_i\}$, take $\gamma=1$ and $K=0$. Then $\beta,c$
depend only on $n,\lambda,\Lambda$, and $C$ depends additionally
only on positive lower bounds for $\rho$ and $\delta$.

\begin{remark}\label{rem:parameters}
The dependence of $C$ on $\rho$ and $\delta$ in
Theorem \ref{thm:main} cannot be dropped: the estimate
\eqref{eq:main-expansion} may fail if either dependence is removed; see Example \ref{ex:dependence} for counterexamples
in every dimension $d_m+1$ with $m\ge 3$.
These counterexamples utilize the singular solutions $w_m$ constructed in Theorem \ref{thm:trace-cubic}.
\end{remark}

Trudinger \cite{Trudinger-twice-differentiability} proved that every
continuous viscosity solution of \eqref{eq:equation} is twice
differentiable almost everywhere. Recently, Chu \cite{Chu}
quantitatively strengthened this result: for each $0\leq\alpha<1$,
the set where $u$ has no pointwise $C^{2,\alpha}$ expansion\footnote{In \cite{Chu}, pointwise $C^{2,\alpha}$ expansion refers to little-$o$ remainder. More precisely, a continuous
function $u$ has a $C^{2,\alpha}$ expansion at $x$ if there
exists a polynomial $q_x$ of degree at most two such that
$u(y)=q_x(y)+o(|y-x|^{2+\alpha})$ as $y\to x$.} has
Hausdorff dimension at most $n-\varepsilon_0(1-\alpha)$, where
$\varepsilon_0>0$ depends only on $n,\lambda,\Lambda$.
For $\alpha=0$, this is precisely the set where $u$ is not twice
differentiable. 
Determining whether $u$ is twice differentiable
or admits a $C^{2,\alpha}$ expansion at a point
remains a delicate issue. 
Theorem \ref{thm:main} provides a geometric criterion
ensuring twice differentiability and a pointwise $C^{2,\alpha}$
expansion for some $\alpha>0$.
In dimension three, it implies in particular that $D^2u(0)$
exists whenever $\partial_eu$ is constant on a $C^2$ surface
containing $0$, for some constant unit vector $e$.

For viscosity solutions $u$ of \eqref{eq:equation},
pointwise twice differentiability and the existence of pointwise
$C^{2,\alpha}$ expansions must be distinguished from local $C^2$
and $C^{2,\alpha}$ regularity. Indeed, in dimensions $n\geq5$,
the results in \cite{Chu} show that the singular set, consisting
of points at which $u$ has no $C^2$ neighborhood, can have
positive Lebesgue measure, even though $u$ admits pointwise
$C^{2,\alpha}$ expansions almost everywhere for every
$0\leq\alpha<1$. However, if we assume additionally that
$F\in C^1$ and $DF$ is uniformly continuous on $\Sym(n)$, then
the partial regularity theorem of Armstrong--Silvestre--Smart
\cite{ASC} gives a relatively closed set $\Sigma\subset B_1$
of Hausdorff dimension at most $n-\varepsilon$, where
$\varepsilon>0$ depends only on $n,\lambda,\Lambda$, such that
$u\in C^{2,\beta}_{\mathrm{loc}}(B_1\setminus\Sigma)$ for every $0<\beta<1$.

As a consequence of Theorem \ref{thm:main}, we obtain the following
interior $C^{2,\beta}$ regularity result under uniform geometric
hypotheses.

\begin{corollary}\label{cor:uniform}
Let $n\geq 3$, $0<\gamma\leq 1$, $K\geq 0$, and let $F:\Sym(n)\to\R$ satisfy \eqref{eq:ellipticity}.
Suppose $u$ is a continuous viscosity solution of \eqref{eq:equation}.
For every $1\le i\le n-2$ and  $p\in B_{1/2}$, let $S_{i,p}\subset \R^n$ be a $C^2$ hypersurface
with admissible radius $\rho\le 1/8$ at $p\in S_{i,p}$, 
and let
$\nu_{i,p}\in C^{1,\gamma}(S_{i,p}\cap B_\rho(p);\R^n)$ satisfy
\eqref{eq:field-bound} 
with $S=S_{i,p}$ and $\nu=\nu_{i,p}$. 
Assume for every $p\in B_{1/2}$ that
$|\nu_{i,p}(p)|=1$, that $\delta(\nu_{1,p}(p),\dots,\nu_{n-2,p}(p))\geq\delta>0$, and that
$$
 \nu_{i,p}(x)\cdot Du(x)=\text{constant}
 \quad\text{on }S_{i,p}\cap B_\rho(p)
 \quad \text{for every }1\le i\le n-2.
$$
Then $u\in C^{2,\beta}(B_{1/2})$ and
\begin{equation}\label{eq:uniform-estimate}
 \norm{u}_{C^{2,\beta}(B_{1/2})}
 \leq C\norm{u}_{L^\infty(B_1)}.
\end{equation}
Here $\beta\in(0,\gamma)$ depends only on $n,\lambda,\Lambda,\gamma$;
$C$ additionally depends on an upper bound for $K$ and positive
lower bounds for $\rho$ and $\delta$.
\end{corollary}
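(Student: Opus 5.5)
The plan is to apply Theorem~\ref{thm:main} at every point $p\in B_{1/2}$, after translating, and then convert the uniform pointwise expansions into a Hölder bound on $D^2u$ by a standard Campanato-type argument.

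\emph{Step 1: rescaling.} Fix $p\in B_{1/2}$. Set $v(x):=u(p+x/2)$ on $B_1$. Then $v$ solves $\tilde F(D^2v)=0$ with $\tilde F(M)=F(4M)$, which after normalizing by $4$ satisfies \eqref{eq:ellipticity} with the same $\lambda,\Lambda$. The hypersurfaces $2(S_{i,p}-p)$ have admissible radius $2\rho$ at $0$; shrinking to $\min(2\rho,1/8)\ge\rho$ keeps admissibility. The fields $\nu_{i,p}(p+\cdot/2)$ still satisfy \eqref{eq:field-bound} with the same $K$, because the quantity there is scale invariant and shrinking the radius only decreases it. The independence number is unchanged. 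Finally, $\nu\cdot Dv$ is constant on the rescaled surface.

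Theorem~\ref{thm:main} therefore gives, with uniform constants, that $u$ is twice differentiable at every $p\in B_{1/2}$, that $|Du(p)|+\norm{D^2u(p)}\le C\norm{u}_{L^\infty(B_1)}$, and that
\[
\bigl|u(x)-q_p(x)\bigr|\le C\norm{u}_{L^\infty(B_1)}|x-p|^{2+\beta},\qquad |x-p|<r_0 .
\]
Here $q_p$ is the second-order Taylor polynomial of $u$ at $p$, and $r_0=c\rho/2$ depends only on the allowed quantities. The bound on $|u(p)|$ is trivial.

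\emph{Step 2: Hölder continuity of $D^2u$.} Take $p,p'\in B_{1/2}$ with $r:=|p-p'|<r_0/2$. On $B_r(p)$, which lies inside both expansion balls, we have $|q_p-q_{p'}|\le C\norm{u}_{L^\infty}r^{2+\beta}$. Since $q_p-q_{p'}$ is a quadratic polynomial, equivalence of norms on polynomials of degree at most two, applied on a ball of radius $r$, gives
\[
\norm{D^2u(p)-D^2u(p')}\le C\norm{u}_{L^\infty}r^{\beta},
\]
together with the analogous bounds $|Du(p)-Du(p')|\le Cr^{1+\beta}$ and so on. For $|p-p'|\ge r_0/2$, the Hölder quotient is controlled by the uniform bound $\norm{D^2u}\le C\norm{u}_{L^\infty}$ from Step 1.

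\emph{Step 3: from pointwise to genuine $C^{2,\beta}$.} We still need $u\in C^2$ with $D^2u$ equal to the classical Hessian. Because $D^2u(p)$ is Hölder in $p$ and $Du$ is differentiable everywhere in $B_{1/2}$ with derivative $D^2u(p)$ (Theorem~\ref{thm:main} gives differentiability of $Du$), $Du$ is $C^1$ with a $C^\beta$ derivative. Together with the bounds on $u$ and $Du$, this yields \eqref{eq:uniform-estimate}.

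The main obstacle is Step 1: one has to verify that the hypotheses of Theorem~\ref{thm:main} survive translation and dilation with constants that depend only on the stated quantities. In particular, the expansion radius $c\rho$ must be uniform in $p$ so that the overlap argument in Step 2 works. Everything else is routine.
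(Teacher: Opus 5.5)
Your proposal is correct and follows the paper's argument: apply a rescaled Theorem~\ref{thm:main} at each $p\in B_{1/2}$ with constants uniform in $p$, then pass from uniform pointwise $C^{2,\beta}$ expansions to a $C^{2,\beta}$ bound by comparing Taylor polynomials, which the paper calls ``standard'' and you spell out correctly. (The paper rescales from $B_{1/4}(p)$ rather than $B_{1/2}(p)$, so its bounds are in terms of $\norm{u}_{L^\infty(B_{3/4})}$, which is always finite and therefore also covers a $u$ that is unbounded on $B_1$.) One harmless slip: comparing $q_p$ and $q_{p'}$ gives $|Du(p)-Du(p')-D^2u(p')(p-p')|\le Cr^{1+\beta}$, not $|Du(p)-Du(p')|\le Cr^{1+\beta}$, but you never use that bound.
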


The hypersurfaces and vector fields may be chosen separately at each base
point $p\in B_{1/2}$; only the parameters $\rho,\delta,\gamma, K$ must be uniform.
Again for constant vector fields $\{\nu_{i,p}\}$, one may take $\gamma=1$ and $K=0$.
Then $\beta$
depends only on $n,\lambda,\Lambda$, and $C$ depends additionally
only on positive lower bounds for $\rho$ and $\delta$.

To present two applications of Corollary \ref{cor:uniform}, we consider the following decomposition
\begin{equation}\label{eq:blocks}
 \R^n=\R^d\times\R^m\times
 \R^{r_1}\times\cdots\times\R^{r_k},
 \quad d+k\leq2,
\end{equation}
where $d,m,k\geq0$, $r_i\geq1$, and
$d+m+\sum_i r_i=n$. Accordingly, we write $x\in \R^n$ by $x=(z,y,x^{(1)},\dots,x^{(k)})$.  Here components with zero dimension should be omitted. 

\begin{corollary}\label{cor:quadratic}
Let $n\geq3$, $0<\gamma\leq1$, and $L\geq1$.
Let $F:\Sym(n)\to\R$ satisfy \eqref{eq:ellipticity}, and let $u$
be a continuous viscosity solution of \eqref{eq:equation}.
Suppose that $\Phi\in C^{2,\gamma}(B_{3/4};\R^n)$ satisfies $\det D\Phi\ne 0$ in $B_{3/4}$, and
$$
 \norm{D\Phi}_{C^{1,\gamma}(B_{3/4})}
 +\norm{(D\Phi)^{-1}}_{L^\infty(B_{3/4})}\leq L.
$$
Using the decomposition \eqref{eq:blocks}, write
$\Phi=(Z,Y,X^{(1)},\dots,X^{(k)})$.
Assume that $u$ takes the form
$$
 u(x)=U\bigl(Z(x),Q_1(X^{(1)}(x)),\dots,Q_k(X^{(k)}(x))\bigr)\quad \text{for }x\in B_{3/4},
$$
where each $Q_i$ is a real polynomial of degree at most two and
$U$ is a function on the set of attained arguments.
Then $u\in C^{2,\beta}(B_{1/2})$ and
\eqref{eq:uniform-estimate} holds, with $\beta\in(0,\gamma)$ depending only on
$n,\lambda,\Lambda,\gamma$ and $C$ additionally depending on an upper bound for $L$.
\end{corollary}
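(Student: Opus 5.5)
The plan is to verify the hypotheses of Corollary \ref{cor:uniform} at every $p\in B_{1/2}$. We use hypersurfaces that are preimages under $\Phi$ of affine hyperplanes, and vector fields that are pullbacks of constant vectors. All geometric parameters will then be controlled by $L$ alone.

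\emph{Local straightening.} Since $\norm{D\Phi}_{C^{1,\gamma}}+\norm{(D\Phi)^{-1}}_{L^\infty}\le L$, the quantitative inverse function theorem gives $r_0=r_0(n,L)>0$ such that, for each $p\in B_{1/2}$, $\Phi$ is a $C^{2,\gamma}$ diffeomorphism of $B_{r_0}(p)$ onto a neighborhood of $\xi_p:=\Phi(p)$. The inverse $\Psi$ has $C^{2,\gamma}$ norm bounded in terms of $n,L$. Set $\tilde u:=u\circ\Psi$, which is $C^1$ because $u\in C^{1,\alpha}_{\mathrm{loc}}$. Then for a constant $e\in\R^n$,
$$Du(x)\cdot (D\Phi(x))^{-1}e=D\tilde u(\Phi(x))\cdot e .$$
Writing $\xi=(z,y,\xi^{(1)},\dots,\xi^{(k)})$, we have $\tilde u(\xi)=U(z,Q_1(\xi^{(1)}),\dots,Q_k(\xi^{(k)}))$. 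No regularity of $U$ will be used.

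\emph{Choice of directions.} There are two kinds.
\begin{enumerate}
\item For each coordinate direction $e$ of the $\R^m$ block, $\tilde u$ is independent of $y$, so $\partial_e\tilde u\equiv0$. We take $H_e=\R^n$, or any hyperplane through $\xi_p$.
\item For the $i$-th block, let $g_i=\nabla Q_i$, an affine map, and choose $r_i-1$ orthonormal vectors $e\in\R^{r_i}$ (embedded in $\R^n$) orthogonal to $g_i(\xi_p^{(i)})$; if $g_i(\xi^{(i)}_p)=0$, any choice works. Let $H_e:=\{\xi: e\cdot g_i(\xi^{(i)})=0\}$. This affine function vanishes at $\xi_p$, so $H_e$ is an affine hyperplane through $\xi_p$, or all of $\R^n$ if the function vanishes identically; in that case we take any hyperplane through $\xi_p$.
\end{enumerate}

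\emph{Key step: vanishing of $\partial_e\tilde u$ on $H_e$.} For $\xi\in H_e$ and $t\in\R$, since $Q_i$ is quadratic,
$$Q_i(\xi^{(i)}\pm te)=Q_i(\xi^{(i)})\pm t\,e\cdot g_i(\xi^{(i)})+t^2 Q_i^{(2)}(e)=Q_i(\xi^{(i)})+t^2Q_i^{(2)}(e),$$
where $Q_i^{(2)}$ is the quadratic part of $Q_i$. Hence $\tilde u(\xi+te)=\tilde u(\xi-te)$. Because $\tilde u\in C^1$, the derivative $\partial_e\tilde u(\xi)$ exists and equals its own negative, so it is $0$. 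This gives $\nu\cdot Du\equiv0$ on $S:=\Psi(H_e)$ near $p$, with $\nu(x):=(D\Phi(x))^{-1}e/|(D\Phi(p))^{-1}e|$.

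\emph{Counting and parameter bounds.} The number of directions is $m+\sum_i(r_i-1)=n-d-k\ge n-2$ by \eqref{eq:blocks}; we keep any $n-2$ of them. In $\xi$-coordinates the chosen vectors are orthonormal, so after applying $(D\Phi(p))^{-1}$ and normalizing, $\delta\ge c(n,L)>0$. Each $S=\Psi(H_e)$ is the $C^{2,\gamma}$ image of a hyperplane under a map with bounded $C^2$ norm and bounded inverse differential. Its second fundamental form is therefore bounded by $C(n,L)$, and it is relatively closed in a ball of radius comparable to $r_0$. Hence some $\rho=\rho(n,L)\le1/8$ is admissible. The fields $\nu$ are $C^{1,\gamma}$ along $S$, and \eqref{eq:field-bound} holds with $K=K(n,L)$.

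\emph{Conclusion and main obstacle.} Corollary \ref{cor:uniform} now applies. Its $B_1$ normalization, versus the present $B_{3/4}$, is handled by rescaling and covering $B_{1/2}$ with small balls, which only changes constants. This yields $u\in C^{2,\beta}(B_{1/2})$ and \eqref{eq:uniform-estimate}. The main obstacle is uniformity near critical points of $Q_i$. There the natural tangent fields to the level sets of $Q_i$ degenerate. The symmetric-reflection trick with constant directions $e\perp g_i(\xi_p)$ and the flat hypersurfaces $H_e$ avoids this degeneration entirely, which is why the constants depend only on $L$.
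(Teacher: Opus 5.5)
Your proposal is correct and follows essentially the same route as the paper. Both pull back, through $\Phi$, the affine hyperplanes $\{e\cdot \nabla Q_i(\xi^{(i)})=0\}$ (or coordinate hyperplanes for the $Y$-directions) through $\Phi(p)$, use the fields $(D\Phi)^{-1}e$ normalized at $p$, exploit the reflection identity $Q_i(\xi^{(i)}+te)=Q_i(\xi^{(i)}-te)$ on those hyperplanes, and then invoke Corollary \ref{cor:uniform}. The only difference is minor: the paper defines $S_j$ directly as the level set $\{\ell_j\cdot(\Phi-\Phi(p))=0\}$ in $B_{3/4}$, which needs only pointwise local inverses rather than a quantitative inverse function theorem and gives explicit constants $\rho=1/(8L^2)$, $\delta\ge L^{-2}$, $K=8L^6$, so Corollary \ref{cor:uniform} applies as stated without your rescaling and covering step.
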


Note that constants $\beta$ and $C$ are independent of $Q_i$.
The map $\Phi$ need not be globally one-to-one in $B_{3/4}$, and no regularity
of $U$ is assumed beyond the continuity of $u$.
When $\Phi$ is the identity map, taking $\gamma=1$ and $L=2$
gives $\beta$ and $C$ depending only on $n,\lambda,\Lambda$.

We provide several illustrating examples by
taking $\Phi$ to be the identity map. 
Any solution of the form $u(x)=U(x_1,x_2)$ satisfies the condition in
Corollary \ref{cor:quadratic}, so this corollary may be viewed as an
extension of Nirenberg's theorem to dimensions three and higher.
Other examples include solutions of the forms
$$
U(x_1,x_2^2+\cdots+x_n^2)
\quad\text{and}\quad
U(|x'|^2,|x''|^2),
$$
where $x'=(x_1,\dots,x_\ell)$ and
$x''=(x_{\ell+1},\dots,x_n)$.
Further examples, involving more general quadratic expressions, include
$$
U(x_1^2+x_2-x_3-x_4^2),\quad
U(x_1,x_2x_3),\quad
U(x_1^2-x_2^2,x_3^2-x_4^2),\quad \text{and many others}.
$$

The next corollary concerns the following Dirichlet problem:
\begin{equation}\label{eq:dirichlet}
 F(D^2u)=0\quad\hbox{in }\Omega,\qquad
 u=g\quad\hbox{on }\partial\Omega,
\end{equation}
where $\Omega\subset\R^n$ is a bounded $C^2$ domain, $g\in C^0(\partial\Omega)$, and $F:\Sym(n)\to\R$ satisfies \eqref{eq:ellipticity}. It is well-known that there exists a unique viscosity solution $u\in C^0(\overline\Omega)$ of \eqref{eq:dirichlet}; see \cite{Ishii-89, Jensen-88}.
We further impose group symmetry assumptions on $(\Omega,F,g)$. 
Take $m=0$ in \eqref{eq:blocks}. For each $i$, let
$\mathcal G_i\subset O(r_i)$ be a subgroup acting transitively on
the unit sphere in $\R^{r_i}$, and let
$$
 \mathcal G=\mathcal G_1\times\cdots\times\mathcal G_k
$$
act on $(x^{(1)},\dots,x^{(k)})$ and leave the $z$ variable fixed.
For example, one may take $\mathcal G_i=O(r_i)$, or
$\mathcal G_i=SO(r_i)$ when $r_i\geq2$.
Assume the following invariances:
\begin{equation}\label{domain-G-invariant}
     O\Omega=\Omega \quad \text{for every }O\in\mathcal G.
\end{equation}
\begin{equation}\label{F-G-invariant}
   F(O^TMO)=F(M) \quad \text{for every } M\in \Sym(n)\text{ and } O\in\mathcal G. 
\end{equation}
\begin{equation}\label{g-G-invariant}
    g(O \cdot )=g(\cdot) \text{ on }  \partial\Omega\quad \text{for every } O\in \mathcal G.
\end{equation}

\begin{corollary}\label{cor:dirichlet}
For $n\geq3$, let the group $\mathcal G$
be as above. Let $\Omega\subset\R^n$ be a bounded $C^2$ domain satisfying \eqref{domain-G-invariant}.
Suppose that $F:\Sym(n)\to\R$ satisfies \eqref{eq:ellipticity} and \eqref{F-G-invariant}, and $g\in C^0(\partial\Omega)$ satisfies \eqref{g-G-invariant}. Then there exists $\beta\in (0,1)$, depending only on $n,\lambda,\Lambda$, such that
the unique continuous viscosity solution $u$ of
\eqref{eq:dirichlet} satisfies $u\in C^{2,\beta}_{\mathrm{loc}}(\Omega)$ and
\begin{equation}\label{eq:dirichlet-estimate}
 \norm{u}_{C^{2,\beta}(\Omega')}
 \leq C\bigl(\norm{g}_{L^\infty(\partial\Omega)}+|F(0)|\bigr)
\end{equation}
for every open set $\Omega'\Subset\Omega$,
where $C$ depends on 
$n,\lambda,\Lambda, \operatorname{diam}\Omega$, and
$\operatorname{dist}(\Omega',\partial\Omega)$.
\end{corollary}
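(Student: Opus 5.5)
The plan is to show that the solution $u$ inherits the $\mathcal G$-symmetry, then apply Corollary \ref{cor:quadratic} locally with $\Phi$ a rigid motion. A covering and scaling argument then gives \eqref{eq:dirichlet-estimate}.

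\emph{Step 1: invariance.} For $O\in\mathcal G$, set $v(x):=u(Ox)$. By \eqref{domain-G-invariant}, $v$ is defined and continuous on $\overline\Omega$, and by \eqref{g-G-invariant}, $v=g$ on $\partial\Omega$. Test functions transform as $\varphi\mapsto\varphi(O\,\cdot)$, with $D^2[\varphi(Ox)]=O^TD^2\varphi(Ox)O$. Hence \eqref{F-G-invariant} shows that $v$ is a viscosity solution of $F(D^2v)=0$. Uniqueness for \eqref{eq:dirichlet} gives $v=u$. Since $\mathcal G_i$ acts transitively on the unit sphere of $\R^{r_i}$, $u(z,x^{(1)},\dots,x^{(k)})$ depends on each $x^{(i)}$ only through $|x^{(i)}|^2$. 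Thus
$$u(x)=U\bigl(z,|x^{(1)}|^2,\dots,|x^{(k)}|^2\bigr),$$
which has the form in Corollary \ref{cor:quadratic} with $m=0$, $Q_i(\xi)=|\xi|^2$, and $d+k\le2$.

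\emph{Step 2: localization.} Fix $\Omega'\Subset\Omega$ and set $r:=\min\{\operatorname{dist}(\Omega',\partial\Omega),1\}/2$. For $p\in\Omega'$, put $u_p(x):=r^{-2}u(p+rx)$ on $B_1$. This solves $F_{p}(D^2u_p)=0$ with $F_p(M):=F(M)$, since the factor $r^{-2}$ exactly cancels the scaling. Moreover $u_p(x)=U(z_p+rz,\,|p^{(1)}+rx^{(1)}|^2,\dots)$, and each $|p^{(i)}+rx^{(i)}|^2$ is a quadratic polynomial in $x^{(i)}$. So Corollary \ref{cor:quadratic} applies with $\Phi=\mathrm{id}$, $\gamma=1$, $L=2$; the $z$-block is absorbed into $U$ through an affine change. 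This is why the corollary insists that $\beta,C$ be independent of $Q_i$: the translated quadratics are arbitrary.

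One point needs care. Corollary \ref{cor:quadratic} is presumably stated for $F$ with the normalization in which it was proved, and the estimate $\|u\|_{C^{2,\beta}}\le C\|u\|_{L^\infty}$ for equations with $F(0)\neq0$ fails as stated for constants. I would therefore first reduce to $F(0)=0$. Choose $t$ with $F(tI)=0$; such $t$ exists with $|t|\le|F(0)|/(n\lambda)$ by \eqref{eq:ellipticity}. Replace $u$ by $u-\tfrac t2|x|^2$. This solves $\tilde F(D^2\tilde u)=0$ with $\tilde F(M)=F(M+tI)$, $\tilde F(0)=0$, and $\tilde F$ has the same ellipticity and the same invariance \eqref{F-G-invariant}, because $O^TIO=I$. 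The modified function keeps the symmetric form, since $|x|^2=|z|^2+\sum|x^{(i)}|^2$ and $|z|^2$ can be absorbed as a function of $z$; the same remains true after translation and scaling.

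\emph{Step 3: global bounds.} By the ABP maximum principle (or comparison with quadratic barriers),
$$\|\tilde u\|_{L^\infty(\Omega)}\le C\bigl(\|g\|_{L^\infty}+|F(0)|\bigr),$$
with $C$ depending on $n,\lambda,\Lambda,\operatorname{diam}\Omega$. Combining this with the rescaled estimates at each $p\in\Omega'$ gives uniform bounds on $D^2\tilde u$ and its $\beta$-Hölder quotients at scale $r/2$. For points farther apart, the oscillation of $D^2\tilde u$ is bounded by $2\sup|D^2\tilde u|\,r^{-\beta}|x-y|^\beta$. Adding back $\tfrac t2|x|^2$ then yields \eqref{eq:dirichlet-estimate}.

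The main obstacle is Step 1, the invariance, which rests on the uniqueness of viscosity solutions of \eqref{eq:dirichlet}. The rest is bookkeeping. The only subtle point there is that the translated radial quadratics $|p^{(i)}+rx^{(i)}|^2$ are still quadratic in $x^{(i)}$, which is exactly the uniformity in $Q_i$ that Corollary \ref{cor:quadratic} provides.
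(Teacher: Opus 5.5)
Your architecture matches the paper's. Uniqueness gives $\mathcal G$-invariance, hence $u=U(z,|x^{(1)}|^2,\dots,|x^{(k)}|^2)$. Corollary~\ref{cor:quadratic} on rescaled balls $B_r(p)$, with $\Phi=\mathrm{id}$, then gives $\norm{u}_{C^{2,\beta}(\Omega')}\le C\norm{u}_{L^\infty(\Omega)}$; the translated quadratics $|p^{(i)}+rx^{(i)}|^2$ cause no trouble because the constants do not depend on the $Q_i$. Finally a barrier bounds $\norm u_{L^\infty(\Omega)}$. The paper uses $b_\pm=\pm\bigl(\norm g_\infty+A(R_0^2-|x-x_0|^2)\bigr)$ with $A=|F(0)|/(2n\lambda)$ and $R_0=\operatorname{diam}\Omega$.

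The one step that fails as written is your reduction to $F(0)=0$, and it is also unnecessary. Its premise is wrong. Testing with $\pm16A|x|^2$, $A=\norm v_{L^\infty(B_1)}$, shows that any solution of $G(D^2v)=0$ in $B_1$ satisfies $|G(0)|\le C(n,\Lambda)\norm v_{L^\infty(B_1)}$; this is \eqref{eq:zero-bound}. So constants are not solutions when $F(0)\ne0$. The estimate $\norm u_{C^{2,\beta}}\le C\norm u_{L^\infty}$ in Theorem~\ref{thm:main} and Corollaries~\ref{cor:uniform} and~\ref{cor:quadratic} therefore holds for arbitrary $F$; the proof normalizes $F(M)\mapsto F(NM)/N$.

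Meanwhile the detour spoils the constants. On $\partial\Omega$ you have $\tilde u=g-\frac t2|x|^2$, so $\norm{\tilde u}_{L^\infty(\Omega)}$ and the re-added term $\frac t2|x|^2$ on $\Omega'$ are controlled by $\sup_\Omega|x|$, not by $\operatorname{diam}\Omega$. Since $\mathcal G$ fixes $z$, $\Omega$ may lie arbitrarily far out in the $z$-directions. Concretely, take $F(M)=\tr M-n$, $g=0$, $d=1$, $\mathcal G=O(n-1)$ and $\Omega=B_1(Re_1)$. Then $t=1$, $u=\frac12(|x-Re_1|^2-1)$, and $\tilde u\approx -R^2/2$ on $\Omega$. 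So your Step~3 bound $\norm{\tilde u}_{L^\infty(\Omega)}\le C(\operatorname{diam}\Omega)(\norm g_\infty+|F(0)|)$ is false for large $R$.

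You can repair this in either of two ways:
\begin{enumerate}
\item Drop the reduction, apply Corollary~\ref{cor:quadratic} to $u$ itself, and bound $\norm u_{L^\infty(\Omega)}$ by ABP or the barrier above.
\item Subtract $\frac t2|x-x_0|^2$ with the $\mathcal G$-fixed center $x_0=(z_0,0,\dots,0)$, where $z_0$ is the $z$-coordinate of some $y\in\Omega$. This keeps the symmetric form. Each $\mathcal G_i$ contains an element sending $y^{(i)}$ to $-y^{(i)}$, so $|y^{(i)}|\le\frac12\operatorname{diam}\Omega$, which gives $\sup_\Omega|x-x_0|\le2\operatorname{diam}\Omega$.
\end{enumerate}
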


In the axially symmetric case, i.e. $\mathcal G=O(n-1)$, Corollary \ref{cor:dirichlet} under stronger technical hypotheses  was known by
Nadirashvili--Vl\u{a}du\c{t}; see \cite[Theorem~1]{NV} for $n=3$ and its following remark for general $n$.

Let us briefly describe the proof of Theorem \ref{thm:main}.
 We first use a Krylov-type boundary estimate,
in the form established by
Silvestre--Sirakov \cite{SS}, together with a Caffarelli-type iteration argument, to obtain pointwise
$C^{1,\eta}$ expansions at $0$ for directional derivatives
in $n-2$ independent directions.
Integrating these expansions shows that, after subtraction
of a quadratic polynomial, the solution differs from a
function of the remaining two variables by at most
$C|x|^{2+\eta}$ near $0$.
We then combine Caffarelli's approximation method with
Nirenberg's theorem to obtain the desired quadratic expansion.

The paper is organized as follows. Section \ref{sec:matching}
recalls a boundary estimate needed in the proof of Theorem \ref{thm:main}. Section~\ref{sec:criterion} proves Theorem~\ref{thm:main}
and Corollary~\ref{cor:uniform}.  Section~\ref{sec:applications}
proves Corollary \ref{cor:quadratic} and \ref{cor:dirichlet}. Section \ref{sec:examples} establishes
Theorem \ref{thm:trace-cubic} and the counterexamples in
Remark \ref{rem:parameters}. Appendix~\ref{app-sec-realizations}
discusses Cartan and Hsiang cubics.

\section{Preliminary: Krylov-type boundary estimate}\label{sec:matching}
Recall the definition of \emph{Pucci's extremal operators}.
For a symmetric matrix $M$ with eigenvalues $\mu_j$ and $\Lambda\ge \lambda>0$, define
$$
\textstyle
 \Pplus(M):=\Lambda\sum_{\mu_j>0}\mu_j+
  \lambda\sum_{\mu_j<0}\mu_j\quad \text{and}\quad
 \Pminus(M) :=\lambda\sum_{\mu_j>0}\mu_j+
                 \Lambda\sum_{\mu_j<0}\mu_j.
$$
In the following,
we write $B_r^+ :=B_r\cap\{x_n>0\}$ and $B_r' :=B_r\cap\{x_n=0\}$.

\begin{proposition}[\cite{SS}]\label{lem:flat-expansion}
Let $w\in C^0(B_1^+\cup B_1')$ be a bounded viscosity solution of 
\begin{equation}\label{eq:Pucci-class}
 \Pminus(D^2w)\leq0\leq\Pplus(D^2w)
\end{equation}
in $B_1^+$.
Suppose that $w=0$ on $B_1'$. 
Then there exists $a\in\R$ such that for all $x\in B_{1/4}^+$,
$$
 |w(x)-ax_n|\leq C\norm w_{L^\infty(B_1^+)}|x|^{1+\alpha_0}\quad \text{and}\quad  |a|\leq C\norm w_{L^\infty(B_1^+)},
$$
where positive constants $\alpha_0$ and $C$
depend only on $n,\lambda,\Lambda$.
\end{proposition}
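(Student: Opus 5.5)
The plan is the standard Krylov boundary argument for solutions of the Pucci class \eqref{eq:Pucci-class} vanishing on the flat boundary $B_1'$: prove a one-step improvement of flatness for the quotient $w/x_n$, then iterate it geometrically. Normalize $\norm w_{L^\infty(B_1^+)}=1$.

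The first step is a boundary Harnack-type oscillation decay for $w/x_n$. We show there is $\theta\in(0,1)$, depending only on $n,\lambda,\Lambda$, with the following property. If $a_k x_n\le w\le b_k x_n$ in $B_{r_k}^+$ with $r_k=4^{-k}$, then we can find $a_{k+1}\ge a_k$ and $b_{k+1}\le b_k$ with
$$b_{k+1}-a_{k+1}\le \theta(b_k-a_k)$$
on $B_{r_{k+1}}^+$.

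1. Set $v=w-a_kx_n\ge0$. Since $x_n$ is linear, $v$ stays in the Pucci class and vanishes on the flat boundary.
2. Consider the point $P=r_k e_n/2$. Either $v(P)\ge (b_k-a_k)r_k/4$, or $(b_k-a_k)x_n-v$ satisfies the symmetric inequality. Treat the first case; the other is identical.
3. The interior Harnack inequality spreads the lower bound from $P$ to the region $\{x_n\ge r_k/8\}\cap B_{r_k/2}$, giving $v\ge c(b_k-a_k)r_k$ there.
4. Compare $v$ with a barrier $\phi$ satisfying $\Pminus(D^2\phi)\ge0$ in the annular half-region. A standard choice is $\phi=x_n\bigl(1-|x'|^2/r_k^2\bigr)$ plus a small multiple of $x_n^2$, or a radial barrier $|x-x_0|^{-p}-R^{-p}$ centered at a point below $B'$.
5. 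The barrier comparison yields $v\ge c'(b_k-a_k)x_n$ in $B_{r_{k+1}}^+$, so we may take $a_{k+1}=a_k+c'(b_k-a_k)$.

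Initially we need bounds $a_0,b_0$ with $a_0x_n\le w\le b_0x_n$ near the boundary. These come from the fact that $|w|\le Cx_n$ in $B_{1/2}^+$. To see this, compare $\pm w$ with the barrier above, using $w=0$ on $B_1'$ and $|w|\le1$.

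With this decay, $b_k-a_k\le C\theta^k=Cr_k^{\alpha_0}$, where $\alpha_0=\log_4(1/\theta)$. Both sequences converge to a common limit $a$ with $|a|\le C$. For $x\in B_{r_k}^+\setminus B_{r_{k+1}}^+$ we then get
$$|w(x)-ax_n|\le (b_k-a_k)x_n\le Cr_k^{\alpha_0}|x|\le C|x|^{1+\alpha_0}.$$
Rescaling gives the stated estimate on $B_{1/4}^+$.

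The main obstacle is step 5, the barrier construction. The barrier must be a subsolution of $\Pminus$ that vanishes on $\{x_n=0\}$ and grows linearly in $x_n$. It must also be dominated by $v$ on the rest of the boundary of the comparison region, where we only know $v\ge0$ and the lower bound from the Harnack region. I would take $\phi(x)=x_n\bigl(1-4|x'|^2/r_k^2\bigr)+Ax_n^2$ on the box $\{|x'|<r_k/2,\ 0<x_n<r_k/8\}$, with $A$ chosen large depending on $\lambda,\Lambda$ so that $\Pminus(D^2\phi)\ge0$. Then I would check the boundary ordering face by face. Scaling is harmless because the Pucci class is invariant under $x\mapsto rx$.
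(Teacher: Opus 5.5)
Your overall scheme matches the paper's proof. You bound $|w|\le Cx_n$ initially with a barrier. You improve the slope interval $[a_k,b_k]$ in one step, using Harnack at an interior point followed by a lower barrier. You then iterate geometrically. Steps 1--3 and the final summation are fine.

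The gap is in step 5, which you rightly single out: neither barrier you propose can be compared with $v$. Take $\phi=x_n(1-4|x'|^2/r_k^2)+Ax_n^2$ on $\{|x'|<r_k/2,\ 0<x_n<r_k/8\}$. On the lateral faces $|x'|=r_k/2$ you get $\phi=Ax_n^2>0$. Those faces lie outside your Harnack region, and there you only know $v\ge0$. Verifying $v\ge\epsilon\phi$ on them would itself require a boundary lower bound for $v$ of the kind step 5 is supposed to produce.

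The radial option $|x-x_0|^{-p}-R^{-p}$ with $x_0$ below $B'$ fails as well. If $\operatorname{dist}(x_0,\{x_n=0\})\ge R$, it is $\le0$ on all of $\{x_n\ge0\}$ and gives nothing. Otherwise it is positive on a disc of the flat boundary, where $v=0$. With the opposite sign, that below-centered function is a $\Pplus$-supersolution, and it is precisely what your initial estimate $|w|\le Cx_n$ needs, since $\pm w$ must be dominated by a supersolution. The step-4 box function is a $\Pminus$-subsolution and is of the wrong type for that step.

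The paper resolves step 5 by centering the radial barrier at an \emph{interior} point. In your scaling, take $R=r_k/4$ and $z=(z',R)$ with $|z'|\le R/2$. Then $\overline{B_{R/2}(z)}$ lies in your Harnack region $\{x_n\ge r_k/8\}\cap B_{r_k/2}$, so $v\ge c(b_k-a_k)r_k$ there. On the annulus $R/2<|x-z|<R$, contained in $B_{r_k}^+$, set $\zeta=(|x-z|^{-b}-R^{-b})/((R/2)^{-b}-R^{-b})$ with $\lambda(b+1)>\Lambda(n-1)$. It satisfies $\Pminus(D^2\zeta)>0$, equals $1$ on the inner sphere and $0$ on the outer sphere. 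The outer sphere lies in $\{x_n\ge0\}$, where $v\ge0$, and touches the flat boundary only at $(z',0)$. Comparison gives $v\ge c(b_k-a_k)r_k\zeta\ge c'(b_k-a_k)x_n$ on the segment $\{(z',x_n):0<x_n<R/2\}$. Letting $z'$ vary covers $B^+_{r_k/8}$, so the iteration runs with ratio $1/8$, as in the paper, instead of $1/4$.

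If you prefer to keep a box, the barrier must be negative on the lateral faces. For instance, use $x_n-4|x'-y'|^2/r_k+Ax_n^2/r_k$ on $\{|x'-y'|<r_k/2,\ 0<x_n<\delta r_k\}$, with $A\ge4\Lambda(n-1)/\lambda$ and $\delta\le1/8$ so small that $\delta(1+A\delta)\le1$. Translate it over $|y'|\le r_k/4$ and enlarge the Harnack region to reach down to height $\delta r_k$. The contraction ratio is then $\delta$.
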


This is a Krylov-type boundary estimate \cite{Krylov-83} in the form of Silvestre--Sirakov \cite[Lemma~3.1]{SS} with $K=L=0$ there. We sketch a proof below using Caffarelli's simplification of Krylov's original proof; see \cite[\S IV.3]{Kazdan} for an exposition.

\begin{proof}
If $w\equiv 0$, the conclusion is immediate. Otherwise,
replacing $w$ by $w/\norm{w}_{L^\infty(B_1^+)}$, we may assume $\norm w_{L^\infty(B_1^+)}=1$.  Choose $b>0$ with
$\lambda(b+1)>\Lambda(n-1)$. Then a computation gives
$$
 \Pminus(D^2|x-y|^{-b})>0 \quad \text{for all }x,y\in \R^n\text{ with } x\ne y.
$$

We first prove 
\begin{equation}\label{initial-hopf-est}
    |w(x)|\leq Cx_n \quad  \text{in }  B_{1/2}^+. 
\end{equation}
Fix any $q=(q',0)$ with $|q'|\leq1/2$.  Let $R=1/4$, $y :=q-Re_n$, and
$$
 \psi(x):=
 \frac{R^{-b}-|x-y|^{-b}}{R^{-b}-(\sqrt{2}R)^{-b}}, \qquad x\ne y.
$$
Then $\Pplus(D^2\psi)<0$
in $B_R(q)^+\coloneqq B_R(q)\cap\{x_n>0\}$. Clearly, $\psi\ge 0$ on $B_R(q)^+$ and its flat boundary. 
On its spherical boundary, we have $\psi\geq1$, since
$|x-y|^2=2R^2+2Rx_n\geq2R^2$. 
By the maximum principle, we obtain $\pm w\leq\psi$ in $B_R(q)^+$. 
For $x=q+x_ne_n$ with
$0<x_n<R$, by $\psi(q)=0$ and the mean value theorem, we have $\psi(x)\leq C x_n$. Hence, we obtain $|w(x)|\le C x_n$ for $x\in B^+_{1/2}$ with $x_n<R$. 
When $x_n\geq R$, the desired estimate follows from  $|w|\leq 1$ by enlarging $C$ if necessary. Thus \eqref{initial-hopf-est} is proved.

Next suppose $v$ satisfies \eqref{eq:Pucci-class} and $ ax_n\leq v\leq dx_n$ in $B_1^+$, for some slopes $-\infty<a<d<+\infty$.
Set $L:=d-a>0$. We prove that at least one of the following holds in $B_{1/8}^+$:
$$v(x)\ge (a+c L)x_n\quad \text{or}\quad v(x)\le (d-cL)x_n,$$
where $c\in (0,1/2]$ depends only on $n,\lambda,\Lambda$; in particular, independent of $L$. 

Indeed, still set $R=1/4$, then
at $R e_n$, one of $v-ax_n$ and $dx_n-v$
is at least $LR/2$. Both functions are nonnegative solutions of
\eqref{eq:Pucci-class} in $B_1^+$. Let $v_0$ be the larger function
at $R e_n$. Then $v_0$ is a nonnegative solution of \eqref{eq:Pucci-class} in $B_1^+$ and satisfies $v_0(R e_n)\ge LR/2$. 
The weak Harnack inequality and local maximum principle
\cite[Theorem~4.8]{Caffarelli1995FullyNE}, combined along
a finite chain of balls inside $B_{3/4}^+$, give
$v_0\geq c_0 L$ on $B_{R/2}(z)$, for all
 $z=(z', R)$ with $|z'|\leq R/2$. Here $c_0>0$ depends only on $n,\lambda,\Lambda$.
 In the annulus $R/2<|x-z|< R$, contained
in $B_1^+$, the function
$$
 \zeta(x) :=
 \frac{|x-z|^{-b}-R^{-b}}{(R/2)^{-b}-R^{-b}}
$$
satisfies $\Pminus(D^2\zeta)>0$.
Clearly, $\zeta=1$ on the inner sphere, and $\zeta=0$ on the outer sphere.
By the maximum principle, $v_0 \geq c_0 L\zeta$ on the annulus. For $x=(z',x_n)$ with $0<x_n <R/2$,
$\zeta(x)\geq c_1 x_n$. Consequently,
$v_0\geq c Lx_n$ in $B_{R/2}^+$, where
$c=\min\{c_0 c_1,1/2\}>0$. This proves, on $B_{1/8}^+$, either the
lower slope $a$ increases by $cL$ or the upper slope $d$ decreases
by $cL$.

The estimate \eqref{initial-hopf-est} provides initial slopes $a_0=-C$ and $d_0=C$ on $B_{1/2}^+$. Applying a rescaled version of what we just proved inductively, we obtain nested intervals $[a_j,d_j]$ such that $a_j x_n\le w(x)\le d_j x_n$ on
$B_{8^{-j}/2}^+$ and
$d_j-a_j\leq2C(1-c)^j$. Then $a_j\uparrow a$ and $d_j\downarrow a$ for some $|a|\le C$. Choose $\alpha_0\in(0,1)$ such that
$1-c\leq8^{-\alpha_0}$. For
every $j\ge 0$, the slope bound gives
$$
 |w(x)-ax_n|\leq2C8^{-j\alpha_0}x_n
 \leq C|x|^{1+\alpha_0}\quad\text{for every } 2x\in B^+_{8^{-j}} \setminus B^+_{8^{-j-1}}.
$$
The proposition is therefore proved.
\end{proof}

\section{Proof of Theorem \ref{thm:main} and Corollary \ref{cor:uniform}}\label{sec:criterion}

\subsection{Auxiliary lemmas}

We first extend  Proposition \ref{lem:flat-expansion} to the following setting.

\begin{lemma}\label{lem:matching}
Let $n\geq2$, $\alpha\in(0,1)$, and $L\geq0$.
Let $\phi:B_{1/2}\subset\R^{n-1}\to\R$
satisfy $|\phi(x')|\leq|x'|^2$ in $B_{1/2}$.
Suppose that $w$ is a bounded continuous viscosity solution of \eqref{eq:Pucci-class} in $B_1$, and satisfies
$$
 |w(x',\phi(x'))|\leq L|x'|^{1+\alpha} 
 \quad \text{for }|x'|<1/2.
$$
Then there exists a number $a\in\R$ such that
$$
 |a|\leq C\bigl(\norm w_{L^\infty(B_1)}+L\bigr)\quad\text{and}\quad 
 |w(x)-ax_n|\leq C\bigl(\norm w_{L^\infty(B_1)}+L\bigr)|x|^{1+\eta}
 \quad\text{for }x\in B_{c_b},
$$
where $\eta\in(0,\alpha)$, $c_b\in(0,1/4)$, and $C>0$
depend only on $n,\lambda,\Lambda,\alpha$. 
\end{lemma}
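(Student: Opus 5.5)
We prove Lemma \ref{lem:matching} by a Caffarelli-type iteration that calls Proposition \ref{lem:flat-expansion} once at every dyadic scale. Normalize so that $\norm w_{L^\infty(B_1)}+L\le 1$. The surface $\Gamma=\{x_n=\phi(x')\}$ passes through $0$ and, at scale $r$, lies within $r^2$ of the hyperplane $\{x_n=0\}$. At that scale $w$ on $\Gamma$ has size at most $r^{1+\alpha}$. The plan is to show that there exist slopes $a_k$ with $|a_k-a_{k+1}|\le C\theta^{k\eta}$ and
$$
\sup_{B_{\theta^k}}|w-a_kx_n|\le \theta^{k(1+\eta)}
$$
for a small fixed $\theta$ and some $\eta\in(0,\alpha)$. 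The lemma then follows by the standard summation argument, with $a=\lim_k a_k$.

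The inductive step works with the rescaled function $v(y)=\theta^{-k(1+\eta)}(w-a_kx_n)(\theta^ky)$ on $B_1$. It satisfies \eqref{eq:Pucci-class}, since linear functions do not change Pucci inequalities, and it obeys $|v|\le 1$. On the rescaled surface $\Gamma_k=\{y_n=\phi_k(y')\}$, where $|\phi_k(y')|\le\theta^k|y'|^2$, we have
$$
|v|\le \theta^{k(\alpha-\eta)}|y'|^{1+\alpha}+|a_k|\,\theta^{k(1-\eta)}|y'|^2.
$$
Because $|a_k|\le C$ is bounded uniformly, this is at most $\varepsilon_k:=C\theta^{k(\alpha-\eta)}$, which is small once $k\ge k_0$. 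The first few scales, $k<k_0$, are handled trivially by enlarging constants.

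The key perturbative step compares $v$ with a solution that vanishes on a flat boundary, and I expect this to be the main obstacle. The difficulty is that $\Gamma_k$ is curved, $v$ is only small on it (not zero), and $v$ is a solution on both sides of it. I would handle this by working in the half-ball $\{y_n>\phi_k(y')\}\cap B_{1/2}$ and using the Krylov/Hopf-type barrier argument from the proof of Proposition \ref{lem:flat-expansion}. That argument adapts to the domain above a $C^{1,1}$-small graph, because the exterior ball barriers $\psi$ still fit there: the graph has curvature at most $2\theta^k\le 1$.

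Concretely, I would not invoke Proposition \ref{lem:flat-expansion} as a black box. Instead I would rerun its oscillation-of-slope argument, modified in two ways. First, the barriers are based at points of $\Gamma_k$ rather than of the flat boundary. Second, the slope bounds only need to hold up to an additive error $\varepsilon_k$, so the two-sided comparison reads
$$
a x_n-\varepsilon\le v\le dx_n+\varepsilon
$$
with $x_n$ replaced by $x_n-\phi_k(x')$. The slope improvement then becomes: either $a$ increases by $cL'$ or $d$ decreases by $cL'$, provided the slope gap $L'=d-a$ is large compared with $\varepsilon$. The additive errors accumulate geometrically and are absorbed.

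Running the argument on both the upper and the lower side of $\Gamma_k$ yields a single slope $\tilde a$ such that
$$
|v-\tilde a\,(y_n-\phi_k)|\le C(\theta^{1+\alpha_0}+\varepsilon_k)\quad\text{in } B_\theta .
$$
For this to give one slope rather than two, the slopes on the two sides must agree. This holds because $w\in C^{1}$: the interior $C^{1,\alpha}$ regularity quoted in the paper forces $Dv$ to be continuous across $\Gamma_k$. If that matching turns out to be delicate, a fallback is available: treat only $\{y_n>\phi_k\}$ to get the slope there, then run the same argument on $\{y_n<\phi_k\}$ and compare the two slopes at the single point $0$ using differentiability of $w$.

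To close the induction, replace $y_n-\phi_k$ by $y_n$ at a cost of $\theta^k\theta^2$. Then choose $\theta$ so that $C\theta^{1+\alpha_0}\le\frac12\theta^{1+\eta}$, which requires $\eta<\alpha_0$, and choose $k_0$ so that $C\varepsilon_k\le\frac12\theta^{1+\eta}$ for all $k\ge k_0$. Scaling back gives $a_{k+1}=a_k+\theta^{k\eta}\tilde a$, completing the iteration with $\eta<\min\{\alpha,\alpha_0\}$.
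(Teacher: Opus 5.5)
Your overall architecture matches the paper's: a one-scale improvement iterated geometrically, with $\eta<\min\{\alpha,\alpha_0\}$. The one-sided part of your perturbed Krylov argument is also plausible. But there is a genuine gap at exactly the step you flagged, namely matching the slopes on the two sides of $\Gamma_k$.

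You justify the matching by $w\in C^1$, citing the interior $C^{1,\alpha}$ estimate. That estimate, \cite[Corollary~5.7]{Caffarelli1995FullyNE}, is for solutions of $F(D^2u)=0$. Here $w$ only satisfies the two Pucci inequalities \eqref{eq:Pucci-class}. This class contains solutions of linear equations with merely measurable coefficients, and in general it is only $C^{\alpha}$ (Krylov--Safonov). This is precisely the situation in the application: in Theorem~\ref{thm:main}, $w_i=\mu_i\cdot(Du-a)+a\cdot A_ix$ is a derivative of a $C^{1,\gamma_0}$ function, and nothing better is known.

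Your fallback, comparing the two slopes at $0$ via differentiability of $w$, is circular. Differentiability of $w$ at $0$ is what the lemma asserts; if $w$ were $C^{1,\alpha}$ with bounds, the lemma would be trivial.

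The missing idea is a matching argument that uses only \eqref{eq:Pucci-class}. The paper does this in the exact flat case, where $w=0$ on $B_{1/2}'$:
\begin{enumerate}[label=\textup{(\arabic*)}]
\item Proposition~\ref{lem:flat-expansion} gives one-sided slopes $a_\pm$.
\item The blow-ups $w(rx)/r$ converge to $w_0=a_\pm x_n$ on $\{\pm x_n\ge0\}$, and $w_0$ still satisfies \eqref{eq:Pucci-class} by stability.
\item If $a_+>a_-$, pick $a_*\in(a_-,a_+)$. Then $a_*x_n+x_n^2$ touches $w_0$ from below at $0$, and $\Pminus$ of its Hessian equals $2\lambda>0$, contradicting the supersolution inequality.
\end{enumerate}
The approximate version needed at each scale is then obtained by a compactness argument, not by rerunning barriers. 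Alternatively, you could make the matching quantitative at each scale. Apply the weak Harnack inequality to the nonnegative supersolution $v-\frac{a_++a_-}{2}y_n+e$, where $e$ is your one-sided error; this yields $|a_+-a_-|\lesssim e/\theta$. Some argument of this kind must be supplied.

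A second, smaller gap concerns the geometry of $\Gamma_k$. Your barriers based at points of $\Gamma_k$ need an exterior-ball condition, i.e.\ a bound on $D^2\phi$. You assert that ``the graph has curvature at most $2\theta^k$,'' but the lemma assumes only $|\phi(x')|\le|x'|^2$. That bound gives no control on curvature, nor even continuity of $\phi$. So $\{y_n>\phi_k(y')\}$ need not be a domain on which Krylov's argument runs, and $v$ is not controlled on the rest of its boundary.

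The paper avoids the geometry of $\Gamma$ entirely. Since $w$ satisfies \eqref{eq:Pucci-class} on both sides of the graph, the interior H\"older estimate gives $|v(y',0)|\le|v(y',\phi_k(y'))|+C|\phi_k(y')|^{\sigma}$. This transfers the smallness from the graph to the flat disk. In the compactness limit the function then vanishes on $B_{1/2}'$, and the flat case applies, including the matching step above. The same transfer would also let your direct scheme work on flat half-balls, with an extra geometrically decaying error $C\theta^{k\sigma}$.
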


\begin{proof}
\emph{Step 1: The flat case.
If $\norm w_{L^\infty(B_1)}\leq1$ and
$w=0$ on $B_{1/2}'$, then there exists $a\in \R$ such that
$$
 |a|\leq C\quad \text{and}\quad 
 |w(x)-ax_n|\leq C|x|^{1+\alpha_0}
 \quad \text{for }x\in B_{1/8},
$$
where positive constants $\alpha_0$ and $C$ depend only on $n,\lambda,\Lambda$.
}

Applying Proposition \ref{lem:flat-expansion}, after rescaling and
reflection, on the two sides of the disk $B'_{1/2}$, we obtain
$a_+,a_-\in \R$ such that 
$$|a_\pm|\le C\quad  \text{and}
\quad 
 |w(x)-a_\pm x_n|\leq C|x|^{1+\alpha_0}
 \quad\text{for }x\in B_{1/8} \text{ with } \pm x_n\geq 0.
$$

We next prove that $a_+=a_-$. If not, we only need to consider the case when $a_+>a_-$, since otherwise we may replace $w$ by $-w$. 

For $r>0$, let
$$w_r(x):={w(rx)}/{r} \text{ for }x\in B_{1/r}, \quad \text{and} \quad  
 w_0 (x):=
a_\pm x_n  \text{ for }\pm x_n\geq 0,
$$
Then the above estimate yields $w_r\to w_0$ in $C^0_{\mathrm{loc}}(\R^n)$ as $r\downarrow 0$.
Since $w_r$ satisfies \eqref{eq:Pucci-class} in
$B_{1/r}$, the viscosity stability gives
 that $w_0$ satisfies \eqref{eq:Pucci-class} in $\R^n$ in the viscosity sense; see \cite{Caffarelli1995FullyNE, CIL}.
 Take
$a_* \in (a_-,a_+)$. Then the quadratic 
$Q(x):=a_*x_n+x_n^2$
touches $w_0$ from below at $0$. The lower Pucci
inequality of $w_0$ gives
$$
 0\geq\Pminus(D^2Q)=2\lambda,
$$
a contradiction.
Therefore, we must have $a_+=a_-=:a$, and the desired conclusion follows.

\medskip

The following proof utilizes ideas in
\cite[Lemma~3.4 and Theorem~3.5]{SS} and their proofs.
Let 
$$\eta:= 2^{-1}\min\{\alpha,\alpha_0\}>0.$$

\emph{Step 2: Approximation at one scale.
There exist $\theta\in(0,1/8)$, $\epsilon\in(0,1/4)$,
and $C_0>0$, depending only on $n,\lambda,\Lambda,\alpha$,
with the following property.
Suppose that $v\in C^0(B_1)$ satisfies
\eqref{eq:Pucci-class} in $B_1$,
$\norm v_{L^\infty(B_1)}\leq1$, and
$\psi:B_{1/2}\subset\R^{n-1}\to\R$ satisfies
$$
 |\psi(x')|\leq\epsilon
 \quad\text{and}\quad
 |v(x',\psi(x'))|\leq\epsilon
 \quad\text{for }|x'|<1/2.
$$
Then there exists $b\in\R$ such that
$$
 |b|\leq C_0
 \quad\text{and}\quad
 \norm{v-bx_n}_{L^\infty(B_\theta)}
 \leq\theta^{1+\eta}.
$$
}

Take $C_0$ to be the constant $C$ in
Step 1, and choose $\theta\in(0,1/8)$
 small enough such that
$$
 C_0\theta^{\alpha_0-\eta}\leq 1/2.
$$
If the assertion failed for every $\epsilon\in(0,1/4)$,
there would be $\epsilon_j\downarrow0$ and functions
$v_j,\psi_j$ satisfying the hypotheses with
$\epsilon=\epsilon_j$, but
$\norm{v_j-bx_n}_{L^\infty(B_\theta)}
 >\theta^{1+\eta}$ for every $|b|\leq C_0$.

The interior H\"older estimate
\cite[Proposition~4.10]{Caffarelli1995FullyNE}
and Arzel\`a--Ascoli give, after passing to a subsequence,
$v_j\to v_\infty$ in $C^0_{\mathrm{loc}}(B_1)$.
The limit function $v_\infty$ satisfies \eqref{eq:Pucci-class} in $B_1$
by viscosity stability, and
$\norm{v_\infty}_{L^\infty(B_1)}\leq1$.
The same interior H\"older estimate gives constants
$\sigma\in(0,1)$ and $C>0$, independent of $j$, such that
$$
 |v_j(x',0)|
 \leq |v_j(x',\psi_j(x'))|
       +C|\psi_j(x')|^\sigma
 \leq\epsilon_j+C\epsilon_j^\sigma
 \quad\text{for }|x'|<1/2.
 $$
Hence, $v_\infty=0$ on $B_{1/2}'$.
By Step 1, there exists $b_\infty\in\R$ with $|b_\infty|\leq C_0$ and
$$
 \norm{v_\infty-b_\infty x_n}_{L^\infty(B_\theta)}
 \leq C_0\theta^{1+\alpha_0}
 \leq 2^{-1}\theta^{1+\eta}.
$$
For large $j$, local uniform convergence therefore gives
$$
 \norm{v_j-b_\infty x_n}_{L^\infty(B_\theta)}
 \leq\norm{v_j-v_\infty}_{L^\infty(B_\theta)}
       +2^{-1}\theta^{1+\eta}
 \leq\theta^{1+\eta},
$$
a contradiction. Step 2 is completed. 

\medskip
\emph{Step 3: Iteration and completion of the proof.}
Let
$$
 A_0:={C_0}/{(1-\theta^\eta)},\quad
 \tau:={\epsilon}/{(1+A_0)},\quad \text{and} \quad 
 N:=\norm w_{L^\infty(B_1)}+L.
$$
Clearly, $0<\tau<1/4$.
If $N=0$, then the desired conclusion holds with $a=0$.
Hence, we may assume $N>0$. Consider 
$$
 v(x):= N^{-1}{w(\tau x)}\quad \text{for } x\in B_1,\quad \text{and}
 \quad
 \psi(x'):=\tau^{-1} {\phi(\tau x')} \quad \text{for }
 |x'|<1/2.
$$
Then $v$ satisfies \eqref{eq:Pucci-class} in $B_1$ and 
$\norm v_{L^\infty(B_1)}\leq 1$. Moreover, 
$$
 |\psi(x')|\leq\tau|x'|^2\quad\text{and}\quad 
 |v(x',\psi(x'))|
 \leq\tau^{1+\alpha}|x'|^{1+\alpha}
 \quad\text{for }|x'|<1/2.
$$

Let $r_j:=\theta^j$ for $j\ge 0$. We construct numbers $a_j$, with
$a_0=0$, such that for every $j\geq0$,
$$
 \norm{v-a_jx_n}_{L^\infty(B_{r_j})}
 \leq r_j^{1+\eta}
 \quad\text{and}\quad
 |a_{j+1}-a_j|\leq C_0r_j^\eta.
$$
We give a construction by induction.
The first estimate holds for $j=0$.
Suppose that $a_0,\dots,a_j$ have been constructed,
then
$$ \textstyle
 |a_j|\leq C_0\sum_{k=0}^{j-1}r_k^\eta\leq A_0.
$$
Define
\[
 v_j(x):= r_j^{-(1+\eta)}
{\bigl(v(r_jx)-a_jr_jx_n \bigr)}\quad \text{and}
 \quad
 \psi_j(x'):= r_j^{-1}\psi(r_jx').
\]
Then $v_j$ satisfies \eqref{eq:Pucci-class} in $B_1$
and $\norm{v_j}_{L^\infty(B_1)}\leq1$.
Moreover, for $|x'|<1/2$,
 \begin{align*}
 |\psi_j(x')|
 &\leq\tau r_j|x'|^2\leq\epsilon,\quad \text{and}\\
 |v_j(x',\psi_j(x'))|
 &\leq\tau^{1+\alpha}r_j^{\alpha-\eta}|x'|^{1+\alpha}
       +A_0\tau r_j^{1-\eta}|x'|^2
 \leq(1+A_0)\tau=\epsilon.
 \end{align*}
In the last inequality, we used $\alpha>\eta$, $1>\eta$, and $\tau<1$.
Step 2 then gives $b_j\in\R$ such that
$$
 |b_j|\leq C_0\quad \text{and}\quad
 \norm{v_j-b_jx_n}_{L^\infty(B_\theta)}
 \leq\theta^{1+\eta}.
$$
Let $a_{j+1}:=a_j+r_j^\eta b_j$. Then
$$
 \norm{v-a_{j+1}x_n}_{L^\infty(B_{r_{j+1}})}
 =r_j^{1+\eta}
   \norm{v_j-b_jx_n}_{L^\infty(B_\theta)}
 \leq r_j^{1+\eta}\theta^{1+\eta}
 =r_{j+1}^{1+\eta},
$$
and $|a_{j+1}-a_j|\leq C_0r_j^\eta$.
This completes the induction, and thus the desired $a_j$'s are constructed.

Since $\{a_j\}$ is Cauchy, we have $a_j\to a_\infty$ for
some $a_\infty\in\R$. Moreover, we have
$$ \textstyle
 |a_\infty|\leq A_0\quad \text{and}\quad 
 |a_\infty-a_j|
 \leq C_0\sum_{k=j}^{\infty}r_k^\eta
 =A_0r_j^\eta\quad \text{for every }j.
$$
For $0<|x|<1$, choose $j\geq0$ such that
$r_{j+1}\leq|x|<r_j$. Then
$$
 |v(x)-a_\infty x_n|
 \leq |v(x)-a_jx_n|+|a_j-a_\infty||x_n|
 \leq r_j^{1+\eta}+A_0r_j^\eta|x|
 \leq C|x|^{1+\eta}.
$$
This estimate also holds at $x=0$, since $v(0)=0$.

Finally, let $a :=Na_\infty/\tau$. Then
$|a|\leq {A_0N}/{\tau}$
and, for $x\in B_\tau$,
$$
 |w(x)-ax_n|
 =N\left|v(x/\tau)-a_\infty{x_n}/\tau\right|
 \leq CN\tau^{-1-\eta}|x|^{1+\eta}.
$$
This proves 
the desired estimates with $c_b:=\tau$.
\end{proof}

The next lemma combines Nirenberg's theorem with Caffarelli's approximation method; see
\cite{Caffarelli-89, Caffarelli1995FullyNE}. In this lemma, we write $x=(z,t)\in\R^2\times\R^{n-2}$.

\begin{lemma}\label{lem:planar}
Let $n\geq3$, $\alpha\in(0,1)$, $D\geq0$, and let
$G:\Sym(n)\to\R$ satisfy \eqref{eq:ellipticity}.
Suppose that $w$ is a bounded continuous viscosity solution
of $G(D^2w)=0$ in $B_1$, and satisfies
\begin{equation}\label{eq:independence}
 |w(z,t)-w(z,0)|\leq D|(z,t)|^{2+\alpha}
 \quad\text{for }(z,t)\in B_1.
\end{equation}
Then there exists a quadratic polynomial $Q=Q(z)$ such that  $G(D^2Q)=0$,
\begin{gather*}
 |Q(0)|+|DQ(0)|+\norm{D^2Q}
 \leq C\bigl(\norm w_{L^\infty(B_1)}+D\bigr),\quad \text{and} \\
 |w(x)-Q(x)|\leq C\bigl(\norm w_{L^\infty(B_1)}+D\bigr)
 |x|^{2+\beta}
 \quad\text{for }x\in B_{1/4},
\end{gather*}
where $\beta\in(0,\alpha)$ and $C>0$ depend only on
$n,\lambda,\Lambda,\alpha$.
Here $Q$ is extended independently of $t$, so that its Hessian
is an $n\times n$ matrix.
\end{lemma}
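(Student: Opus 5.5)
The proof follows Caffarelli's approximation scheme, with Nirenberg's theorem supplying the approximating quadratics. The natural approximants are functions of $z$ alone that solve a two-dimensional uniformly elliptic equation.

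\emph{Reduced operator.} For $N\in\Sym(2)$ let $\tilde N\in\Sym(n)$ be the matrix with $N$ in the upper-left $2\times2$ block and zeros elsewhere. Define $\tilde G(N):=G(\tilde N)$. Then $\tilde G$ satisfies \eqref{eq:ellipticity} on $\Sym(2)$ with the same $\lambda,\Lambda$. Moreover, a function $h(z)$ that is a viscosity solution of $\tilde G(D^2h)=0$ in a planar disk, extended independently of $t$, is a viscosity solution of $G(D^2h)=0$ in the corresponding cylinder. This holds because test functions touching $h(z)$ at $(z_0,t_0)$ restrict to the slice $t=t_0$, and dropping the $t$-block only decreases or increases the Hessian in the correct order. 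I would check this with the standard argument: if $\varphi$ touches from above, then $D^2_{tt}\varphi$ is only constrained weakly, so one instead tests $h$ with $\varphi(z,t_0)$ and uses ellipticity together with the block structure. Quadratics are handled directly.

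\emph{One-step approximation.} Normalize so that $\norm w_{L^\infty(B_1)}+D\le 1$. I claim that for each $\varepsilon>0$ there is $\delta>0$ such that the following holds. If $v$ solves $G_v(D^2v)=0$ in $B_1$ (with $G_v$ of the same ellipticity), $|v|\le1$, and $|v(z,t)-v(z,0)|\le\delta$, then there is $h=h(z)$ solving $\tilde G_v(D^2h)=0$ in $B_{1/2}\subset\R^2$ with $\norm{v-h}_{L^\infty(B_{1/2})}\le\varepsilon$. The natural choice is $h(z):=v(z,0)$ itself. The hypothesis says $v$ is nearly independent of $t$. By compactness (using Hölder estimates and viscosity stability), if no such $h$ existed, a limit $v_\infty$ would be exactly independent of $t$ and solve $G_\infty(D^2v_\infty)=0$. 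Then $v_\infty(z,0)$ solves $\tilde G_\infty=0$, which can be seen by touching with test functions independent of $t$ plus $-K|t|^2$ penalizations, or directly via the comparison that uses only $z$-test functions. This gives a contradiction.

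Nirenberg's theorem (Theorem \ref{thm:nirenberg}) then gives $h\in C^{2,\bar\alpha}(B_{1/4})$ with universal bounds. Hence its Taylor polynomial $P$ satisfies $\tilde G_v(D^2P)=0$ after an adjustment of the constant term (the zero-order term does not affect the Hessian, so I would use $D^2h(0)$ directly; $\tilde G_v(D^2h(0))=0$ holds pointwise since $h\in C^2$). It also satisfies $|h-P|\le C r^{2+\bar\alpha}$. Choosing $\beta<\min\{\alpha,\bar\alpha\}$, then $r$ with $Cr^{\bar\alpha}\le r^{\beta}/2$, and then $\varepsilon=r^{2+\beta}/2$ yields $\norm{v-P}_{L^\infty(B_r)}\le r^{2+\beta}$.

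\emph{Iteration.} Inductively build quadratics $Q_j(z)$ with $G(D^2Q_j)=0$ and $\norm{w-Q_j}_{L^\infty(B_{r^j})}\le r^{j(2+\beta)}$, with coefficient increments geometric. At step $j$ set
\[
v_j(x)=r^{-j(2+\beta)}(w-Q_j)(r^jx),
\]
which solves $G_j(M):=r^{-j\beta}G(r^{j\beta}M+D^2Q_j)=0$. This $G_j$ has the same ellipticity and satisfies $G_j(0)=0$. Since $Q_j$ depends only on $z$, it cancels in the $t$-difference, and \eqref{eq:independence} gives $|v_j(z,t)-v_j(z,0)|\le D\,r^{j(\alpha-\beta)}\le\delta$. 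For this to hold at $j=0$ we pre-scale by a fixed factor so that $D\le\delta$. Summing the increments yields $Q$ and the estimate on $B_{1/4}$, after covering the remaining annulus trivially. The main obstacle is the viscosity-theoretic claim that the slice $v_\infty(z,0)$ of a $t$-independent solution solves the reduced 2D equation. I would prove it by a perturbation argument: given a strict $z$-test function $\phi$ at $z_0$, consider $\phi(z)+|t|^2/\kappa$-type modifications in $\R^n$. Since $v_\infty$ is constant in $t$, $\phi(z)$ itself touches it along the whole $t$-line, and the extended test function touches in $\R^n$ with zero $t$-Hessian, so the $n$-dimensional inequality directly reads $\tilde G(D^2\phi(z_0))\le0$.
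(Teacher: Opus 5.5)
Your architecture (compactness with a $t$-independent limit, Nirenberg's theorem on its slice, and a Caffarelli iteration with $G_j(M)=r^{-j\beta}G(D^2Q_j+r^{j\beta}M)$) is the paper's. Your closing argument that $\bar v(z):=v_\infty(z,0)$ solves $\tilde G_\infty(D^2\bar v)=0$ is correct, and it is in fact the easy direction: a planar test function extended constantly in $t$ touches $v_\infty$ at $(z_0,0)$ with Hessian $\diag\{D^2\phi(z_0),0\}$. The converse extension claim in your ``reduced operator'' paragraph is never needed.

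The genuine gap is in the one-step approximation. First, $h(z)=v(z,0)$ is not a solution in general. Take $G(M)=\tr M$ and $v=z_1^2-(1-\delta)z_2^2-\delta t_1^2$. Then $|v|\le1$ on $B_1$ and $|v(z,t)-v(z,0)|\le\delta$, yet $\Delta_z v(z,0)=2\delta\neq0$.

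More seriously, the compactness argument does not close. The iteration applies the lemma to the varying operators $G_j$, so the lemma must be uniform in the operator, and a contradiction sequence is $(G_k,v_k)$ with $G_k\to G_\infty$. The limit slice $\bar v$ solves $\tilde G_\infty(D^2\bar v)=0$, not $\tilde G_k(D^2\bar v)=0$. So $\bar v$ is not an admissible $h$ for $v_k$, and nothing contradicts the assumption that no admissible $h$ exists for $v_k$. For the same reason, the Taylor polynomial $P_\infty$ of $\bar v$ satisfies $G_\infty(D^2P_\infty)=0$ but generally $G_k(D^2P_\infty)\neq0$. Your iteration, however, needs $G_j(D^2P)=0$ exactly in order to keep $G(D^2Q_{j+1})=0$.

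The paper avoids this by stating the one-scale lemma directly for quadratics: there is $P=P(z)$ with $G_v(D^2P)=0$, $\norm{P}_{L^\infty(B_1)}\le C_0$, and $\norm{v-P}_{L^\infty(B_\theta)}\le\theta^{2+\beta}$. In the contradiction argument one takes Nirenberg's Taylor polynomial $P_\infty$ of $\bar v$. Uniform ellipticity then gives $s_k$ with $G_k\bigl(D^2P_\infty+s_k\diag\{I_2,0\}\bigr)=0$ and $2\lambda|s_k|\le|G_k(D^2P_\infty)|\to0$. The corrected polynomial $P_\infty+s_k|z|^2/2$ is admissible for $v_k$ and lies within $\theta^{2+\beta}$ of $v_k$ on $B_\theta$ for large $k$, which is the contradiction. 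If you keep your formulation with $h$, you would instead have to solve planar Dirichlet problems for $\tilde G_k$ with boundary data $\bar v$ and run a stability-plus-uniqueness argument.

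A smaller point: your claim $G_j(0)=0$ fails at $j=0$, because $G(0)=0$ is not assumed. Compactness of $\{G_k\}$ therefore needs a bound on $|G_k(0)|$. The paper supplies it via $|G_v(0)|\le C(n,\Lambda)\norm{v}_{L^\infty(B_1)}$, obtained by touching $v$ from above and below with paraboloids $c\pm16A|x|^2$, where $A=\norm{v}_{L^\infty(B_1)}$.
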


\begin{proof}
We first record a useful observation: For any continuous viscosity solution $v$ of
$G(D^2v)=0$ in $B_1$, we have
\begin{equation}\label{eq:zero-bound}
 |G(0)|\leq C(n,\Lambda)\norm v_{L^\infty(B_1)}.
\end{equation}
Indeed, if $A:=\norm v_{L^\infty(B_1)}>0$, the maximum of
$v-16A|x|^2$ and the minimum of $v+16A|x|^2$ on
$\overline B_{1/2}$ are attained in the interior.
From this and the equation of $v$, we obtain $G(32AI)\geq0\geq G(-32AI)$. Combining this with a
global Lipschitz bound of $G$ yields \eqref{eq:zero-bound}. The cases when $A=0$ and $A=\infty$ are trivial, so \eqref{eq:zero-bound} is proved.

By Theorem \ref{thm:nirenberg}, there exist $\sigma\in (0,1)$ and
$C_1>0$, depending only on $\lambda,\Lambda$, with the following property.
For every 
$F:\Sym(2)\to\R$ satisfying \eqref{eq:ellipticity},
and every continuous viscosity solution $v$ of
$F(D^2 v)=0$ in $B_1\subset\R^2$ with
$\norm v_{L^\infty(B_1)}\leq 1$, we have $v\in C^{2,\sigma}_{\mathrm{loc}}$ and the quadratic 
$$
 Q_v(z):=v(0)+Dv(0)\cdot z+ 2^{-1} z^T D^2 v(0)z
$$
satisfies $F(D^2 Q_v)=0$,
$$
 \norm{Q_v}_{L^\infty(B_1)}\leq C_1,
 \quad\text{and}\quad
 |v(z)-Q_v (z)|\leq C_1|z|^{2+\sigma}
 \quad\text{for }z\in B_{1/4}.
$$

Let 
$$\beta:=2^{-1}\min\{\alpha,\sigma\}>0. $$

\emph{Step 1: Approximation at one scale.
There exist $\theta\in(0,1/8)$, $\epsilon\in(0,1)$,
and $C_0>0$, depending only on $n,\lambda,\Lambda,\alpha$,
with the following property.
If $v$ is a continuous viscosity solution of $G(D^2v)=0$ in $B_1$, and satisfies
$$
 \norm v_{L^\infty(B_1)}\leq1
 \quad\text{and}\quad
 |v(z,t)-v(z,0)|\leq\epsilon
 \quad\text{for }(z,t)\in B_1,
$$
then there exists a quadratic polynomial $Q=Q(z)$ such that
$G(D^2Q)=0$,
$$
 \norm Q_{L^\infty(B_1)}\leq C_0 \quad \text{and} \quad 
 \norm{v-Q}_{L^\infty(B_\theta)}\leq\theta^{2+\beta}.
$$
}

Take $C_0:=C_1+1$, and choose $\theta\in(0,1/8)$ small enough so that
$C_1\theta^{\sigma-\beta}\leq1/4$.
If the assertion failed for every $\epsilon\in(0,1)$, then
there would be a sequence $(G_j,v_j)$ satisfying all hypotheses with
$\epsilon_j\downarrow 0$, but
$\|v_j-Q\|_{L^\infty(B_\theta)}>\theta^{2+\beta}$ for every quadratic $Q=Q(z)$ satisfying $G_j(D^2 Q)=0$
and $\|Q\|_{L^\infty (B_1)}\le C_0$. 

The interior H\"older estimate
\cite[Proposition~4.10]{Caffarelli1995FullyNE}, a uniform global Lipschitz bound of $G_j$, and \eqref{eq:zero-bound} yield, after passing to a subsequence,
$$
 v_j\to v_\infty\quad\text{in }C^0_{\mathrm{loc}}(B_1),\quad \text{and}
 \quad
 G_j\to G_\infty\quad\text{locally uniformly on }\Sym(n),
$$
where $G_\infty:\Sym(n)\to \R$ satisfies \eqref{eq:ellipticity} with the same constants $\lambda$ and $\Lambda$.
Then the viscosity stability gives
$G_\infty(D^2v_\infty)=0$ in $B_1$.

Since $|v_j(z,t)-v_j(z,0)|\le \epsilon_j$ for $(z,t)\in B_1$, we have
$v_\infty(z,t)=\bar{v}(z)$ with $\norm {\bar{v}}_{L^\infty(B_1)}\leq1$.
Let $\overline G(M):=G_\infty(\diag\{M,0\})$ for $M\in \Sym(2)$, then it satisfies \eqref{eq:ellipticity} with constants
$\lambda,\Lambda$, and 
$$
 \overline G(D^2 \bar{v})=0\quad\text{in }B_1\subset\R^2.
$$
Then, as discussed at the beginning of the proof,
Nirenberg's theorem yields $\bar{v}\in C^{2,\sigma}_{\mathrm{loc}}$ and the quadratic $Q_{\bar{v}}$ satisfies
$\overline{G}(D^2Q_{\bar v})=0$, 
$$
 \norm{Q_{\bar v}}_{L^\infty(B_1)}\leq C_1,\quad\text{and} \quad 
 |\bar{v}(z)-Q_{\bar{v}}(z)|\leq C_1|z|^{2+\sigma}
 \quad\text{for }|z|<1/4.
$$

Let $J:=\diag\{I_2,0\}$ and $Q_\infty(z,t)\coloneqq Q_{\bar{v}}(z)$. For each $j$, the uniform ellipticity of $G_j$ gives a unique $s_j\in\R$ such that
$$
 G_j(D^2Q_\infty+s_jJ)=0\quad\text{and}\quad 
2\lambda |s_j|\leq {|G_j(D^2Q_\infty)|}\to  0.
$$
Therefore,
the quadratic $Q_j:=Q_\infty+s_j|z|^2/2$ satisfies
$G_j(D^2Q_j)=0$ and $\norm{Q_j}_{L^\infty(B_1)}\leq C_0$
for large $j$. However, by the choice of $\theta$,
$$
 \norm{v_j-Q_j}_{L^\infty(B_\theta)}
 \leq\norm{v_j-v_\infty}_{L^\infty(B_\theta)}
      + 4^{-1}\theta^{2+\beta}
      + 2^{-1}|s_j|\theta^2
 \leq\theta^{2+\beta}
$$
for large $j$, a contradiction. Step 1 is completed.

\medskip
\emph{Step 2: Iteration and completion of the proof.}

Let $N:=\norm w_{L^\infty(B_1)}+D/\epsilon$.
If $N=0$, then $w\equiv 0$, $G(0)=0$, and thus the desired conclusion of the lemma holds with $Q=0$.
Hence, we may assume $N>0$.
Replacing $w$ by $w/N$ and $G(M)$ by $G(NM)/N$,
we may further assume that
$$\norm w_{L^\infty(B_1)}\leq1 \quad \text{and} \quad D\leq\epsilon.$$

Let $r_j:=\theta^j$ for $j\geq0$.
By induction, we prove that there exist quadratic polynomials $Q_j=Q_j(z)$, with
$Q_0=0$, such that $G(D^2Q_j)=0$ for $j\geq 1$, and
$$
 \norm{w-Q_j}_{L^\infty(B_{r_j})}\leq r_j^{2+\beta}
 \quad\text{for }j\geq0,
$$
The assertion holds for $j=0$.
Suppose that $Q_j$ has been constructed. Let
$$
 w_j(x):=r_j^{-2-\beta}\bigl(w(r_jx)-Q_j(r_jz)\bigr)\quad \text{and}
 \quad
 G_j(M):=r_j^{-\beta}G(D^2Q_j+r_j^\beta M).
$$
Then $G_j(D^2w_j)=0$ in $B_1$,
$\norm{w_j}_{L^\infty(B_1)}\leq1$, and $G_j$ has the
same ellipticity constants as those of $G$.
Since $Q_j$ is independent of $t$, assumption \eqref{eq:independence} implies
$$
 |w_j(z,t)-w_j(z,0)|
 \leq\epsilon r_j^{\alpha-\beta}|(z,t)|^{2+\alpha}
 \leq\epsilon\quad\text{for }(z,t)\in B_1.
$$
Step 1 then gives a quadratic $q_j=q_j(z)$ satisfying
$G_j(D^2q_j)=0$,
$\norm{q_j}_{L^\infty(B_1)}\leq C_0$, 
and
$\norm{w_j-q_j}_{L^\infty(B_\theta)}
 \leq\theta^{2+\beta}$.
Define
$$
 Q_{j+1}(z):=Q_j(z)+r_j^{2+\beta}q_j(z/r_j).
$$
Then
$G(D^2Q_{j+1})=0$ and
$$
 \norm{w-Q_{j+1}}_{L^\infty(B_{r_{j+1}})}
 =r_j^{2+\beta}\norm{w_j-q_j}_{L^\infty(B_\theta)}
 \leq r_{j+1}^{2+\beta}.
$$
This completes the induction.

For every $j\geq 0$, the triangle inequality gives
$\norm{Q_{j+1}-Q_j}_{L^\infty(B_{r_{j+1}})}
 \leq 2r_j^{2+\beta}$. 
Then a standard fact of quadratic polynomials yields
$$
 |(Q_{j+1}-Q_j)(0)|+
 r_j |D(Q_{j+1}-Q_j)(0)| +
 r_j^2 \norm{D^2(Q_{j+1}-Q_j)} \leq C_* r_j^{2+\beta},
$$
where $C_*$ is independent
of $j$.
Therefore, the coefficients of $Q_j$ converge to those of a
quadratic polynomial $Q=Q(z)$.
Since $Q_0=0$, summing the above coefficient bounds gives
$$
|Q(0)|+|DQ(0)|+\norm{D^2Q}\leq C.
$$
Since $G(D^2Q_j)=0$ for all $j\geq 1$, continuity of $G$
gives $G(D^2Q)=0$.

For $x=(z,t)$ with $|x|\leq r_j$, the coefficient bounds
imply
$$ \textstyle
 |Q(z)-Q_j(z)|
 \leq C \sum_{k=j}^{\infty}
 \bigl(r_k^{2+\beta}
       +r_k^{1+\beta}|x|
       +r_k^\beta|x|^2\bigr)
 \leq C r_j^{2+\beta}.
$$
For $0<|x|<1/4$, choose $j\geq0$ such that
$r_{j+1}\leq|x|<r_j$. Then
$$
 |w(x)-Q(x)|
 \leq |w(x)-Q_j(x)|+|Q_j(x)-Q(x)|
 \leq Cr_j^{2+\beta}
 \leq C|x|^{2+\beta}.
$$
This estimate also holds at $0$ by continuity.
The lemma is therefore proved.
\end{proof}

The next lemma translates a bound of the second fundamental form near $p$ to a quantitative local graphical representation near $p$. 

\begin{lemma}\label{lem:curvature-graphs}
Let $S\subset\R^n$ be a $C^2$ hypersurface with admissible radius $\rho>0$ at $p\in S$.
After a rigid motion taking $p$ to the origin and $T_pS$ to
$\{x_n=0\}$, the set $S$ contains the graph of a $C^2$ function
$\phi: B_{\rho/8}\subset\R^{n-1}\to \R$ satisfying $\phi(0)=0$, $D\phi(0)=0$, and
$\rho \norm{D^2\phi}\leq 3$ in $B_{\rho/8}$.
\end{lemma}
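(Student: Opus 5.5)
We reduce to a graph over the tangent plane with a quantitative radius, using the curvature bound to control how fast the unit normal can turn. Translate and rotate so that $p=0$ and $T_0S=\{x_n=0\}$ with unit normal $e_n$ at $0$. By definition of a $C^2$ hypersurface, near $0$ the set $S$ is the graph of a $C^2$ function $\phi$ with $\phi(0)=0$ and $D\phi(0)=0$. The task is to show that this local graph extends over the whole disk $B_{\rho/8}\subset\R^{n-1}$, stays inside $S\cap B_\rho$, and satisfies $\rho\norm{D^2\phi}\le 3$ there.

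\emph{Step 1 (a priori estimates).} Let $\phi$ be defined on a disk $B_s$ with $s\le\rho/8$, with graph contained in $S\cap B_\rho$. For a graph we have the formula
\[
\II=\frac{D^2\phi}{\sqrt{1+|D\phi|^2}}\quad\text{in the basis }\partial_i+\partial_i\phi\,e_n.
\]
With the induced metric $g=I+D\phi\otimes D\phi$, this gives $|D^2\phi|\le(1+|D\phi|^2)^{3/2}\norm{\II}$. Hence, as long as the graph lies in $B_\rho$, the function $f(r)=\sup_{|x'|\le r}|D\phi|$ satisfies
\[
f'\le (1+f^2)^{3/2}/\rho,\qquad f(0)=0 .
\]
Integrating via $\theta=\arctan f$ gives $\sin\theta(r)\le r/\rho$. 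For $r\le\rho/8$ this yields $|D\phi|\le \tan(\arcsin(1/8))<0.13$. Then $|D^2\phi|\le(1.017)^{3/2}/\rho\le 3/\rho$, with room to spare. Moreover $|\phi(x')|\le 0.13|x'|$, so the graph over $B_{\rho/8}$ lies in $B_{\rho/8\cdot 1.01}\subset B_\rho$, which justifies using the curvature bound along the way.

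\emph{Step 2 (continuation).} Let $s^*$ be the supremum of the radii $s\le\rho/8$ for which $S$ contains the graph of a $C^2$ function over $B_s$ that agrees with the local graph near $0$. We argue that $s^*=\rho/8$. By Step 1, on $B_{s^*}$ the function $\phi$ has uniformly bounded $D\phi$ and $D^2\phi$, so $\phi$ extends as a $C^{1,1}$ function to $\overline B_{s^*}$. Each boundary point of the extended graph lies in $B_\rho$ and is a limit of points of $S$. Since $S\cap B_\rho$ is relatively closed in $B_\rho$, these limit points belong to $S$.

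At such a point $q$, the tangent plane of $S$ is the limit of tangent planes of the graph, so its normal makes an angle of at most $\arctan 0.13$ with $e_n$. Hence $S$ is locally a graph over $\{x_n=0\}$ near $q$. By uniqueness of the graph representation near $q$, this local graph coincides with the extension of $\phi$. This extends $\phi$ past $\partial B_{s^*}$ by compactness. Therefore $s^*<\rho/8$ is impossible, and $s^*=\rho/8$.

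\emph{Main obstacle.} The delicate point is Step 2. We need $S$ near $q$ to be a graph over the $x'$-plane that matches $\phi$, and not a different sheet of $S$ passing nearby. To handle this, take the graph representation of $S$ at $q$ as given by the definition and rotate it to a graph over $\{x_n=0\}$ using the small slope. Continuity of the graph near $q$ then forces agreement with $\phi$ on the overlap. The ODE comparison in Step 1 is routine by comparison. The relative closedness in Definition~\ref{def:admissible} is exactly what prevents the graph from escaping $S$ at its boundary.
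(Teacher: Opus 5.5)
Your proof is correct, but it follows a different route from the paper's. The paper normalizes $\rho=1$ and never grows a disk. It works on the open set $\mathcal U\subset S\cap B_1$ where the unit normal satisfies $N\cdot e_n>1/2$. For each $a\in B_{1/8}\subset\R^{n-1}$ it solves $q_a'=V_a(q_a)$, $q_a(0)=0$, where $V_a$ is the tangent field to $S$ with horizontal component $a$. Thus $q_a$ lifts the ray $t\mapsto ta$ into $S$, and $\phi$ is defined by $q_a(1)=(a,\phi(a))$. The rest of the paper's argument runs as follows:
\begin{itemize}
\item The bounds $|q_a'|\le 2|a|$ and $|\tfrac{d}{dt}N(q_a)|\le 2|a|$ keep $N_n>3/4$ up to time $1$.
\item Relative closedness prevents $q_a$ from leaving $\mathcal U$ before $t=1$.
\item $C^2$ regularity comes from the implicit function theorem plus continuous dependence on $a$.
\end{itemize}
Because each value $\phi(a)$ is produced by its own integral curve, there is nothing to glue and no uniqueness of continuation to check. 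The price is a cruder slope bound, $|D\phi|\le 1/3$.

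Your continuation argument buys the sharp estimate $\sin(\arctan|D\phi(x')|)\le|x'|/\rho$, with equality for the sphere of radius $\rho$. This gives $|D\phi|<0.13$ and $\rho\norm{D^2\phi}\le 1.03$. In exchange it needs bookkeeping that you only sketch:
\begin{itemize}
\item Near $0$ the definition gives a graph over some plane, so you need the implicit function theorem to get a graph over $T_0S$.
\item Two $C^2$ graph pieces in $S$ that agree on a nonempty open set agree on any connected common domain. The agreement set is open and closed, by the same local-graph argument you give at $q$. This is what makes the union over $s<s^*$ well defined.
\item When you extend past $\partial B_{s^*}$ ``by compactness,'' the local graphs at different boundary points must agree on overlaps. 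This holds if you use small balls centered on $\partial B_{s^*}$: any two that meet have a convex intersection that also meets $B_{s^*}$, where both equal $\phi$.
\item The curvature bound may only be used while the graph stays in $B_\rho$. That needs a short bootstrap, not an assumption.
\end{itemize}
These points are routine and do not affect correctness.
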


Here and throughout the paper, for a linear map $A:\R^m\to\R^k$, we use
its operator norm $\norm A:=\sup_{|\xi|=1}|A\xi|$.
Recall that for a $C^2$ graph $z=(x',\phi(x'))$ and the tangent
vector $\widehat\xi=(\xi,D\phi(x')\cdot\xi)$, the graph
formula for the second fundamental form is, up to a sign,
$$
 \II_S(z)(\widehat\xi,\widehat\xi)
 =\frac{D^2\phi(x')(\xi,\xi)}{\sqrt{1+|D\phi(x')|^2}},\quad \xi\in \R^{n-1}.
$$

\begin{proof}[Proof of Lemma \ref{lem:curvature-graphs}]
By translation and rotation, we may assume
$p=0$ and $T_0S=\{x_n=0\}$. Note that $\rho \sup_{S\cap B_\rho(p)}\norm{\II_S}$ is scale-invariant, we may assume $\rho=1$ by a rescaling.

\emph{Claim: The set $S\cap B_{1/4}$ contains a $C^2$ graph
$(x',\phi(x'))$ for $|x'|<1/8$, where $\phi$ satisfies
$\phi(0)=0$, $D\phi(0)=0$, and $|D\phi|\leq1/3$ in $B_{1/8}\subset \R^{n-1}$.
}

Assuming this claim, by the graph formula of the second fundamental form and $\norm{\II_S}\leq 1$, we obtain, for every $\xi\in \R^{n-1}$ and $|x'|<1/8$,
$$
 |D^2\phi(x')(\xi,\xi)|
 \leq\sqrt{1+|D\phi(x')|^2} \, |\widehat\xi|^2
 \leq(1+|D\phi(x')|^2)^{3/2}|\xi|^2
 \leq 3|\xi|^2.
$$
Hence $\norm{D^2\phi}\leq 3$ in $B_{1/8}$.
Rescaling $\rho$ back proves the lemma.

Now we prove the claim.
For $q\in S\cap B_1$, let $\Pi_q^\perp$ be the orthogonal
projection onto the normal line $(T_qS)^\perp$.
Since $S$ is $C^2$, this projection depends $C^1$ on $q$.
Let
$$
 \mathcal U
 :=\{q\in S\cap B_1:|\Pi_q^\perp e_n|>1/2\},
 \quad \text{and}\quad 
 N(q):={\Pi_q^\perp e_n}/{|\Pi_q^\perp e_n|}
 \quad\text{for }q\in\mathcal U,
$$
where $e_n\coloneqq (0,\dots,0,1)$. 
Then $\mathcal U$ is open in $S$ and $0\in \mathcal U$.
The vector field $N$ is a $C^1$ unit normal on $\mathcal U$,
and satisfies 
$N(0)=e_n$ and
$N(q)\cdot e_n
 =|\Pi_q^\perp e_n|>1/2$ for $q\in \mathcal U$.
Below, write $N=(N',N_n)$ with $N'\in\R^{n-1}$.

Let $\pi:\R^n\to\R^{n-1}$ be the horizontal projection defined by
$\pi(x',x_n)=x'$.
For each $a\in\R^{n-1}$ with $|a|<1/8$, define on
$\mathcal U$ the vector field
$$
 V_a(q):=\left(a,- \bigl( {N'(q)\cdot a} \bigr)/ {N_n(q)} \right),\quad q\in \mathcal U.
$$
Clearly, it is $C^1$ and tangent to $S$ on $\mathcal U$.
The local existence and uniqueness theorem for ODE
therefore gives a unique maximal solution  $q_a:[0,T_a)\to\mathcal U$ of
$$
 q_a'(t)=V_a(q_a(t))\quad \text{and}\quad  q_a(0)=0.
$$
Its horizontal component satisfies
$d\pi(q_a(t))/dt=a$, so $\pi(q_a(t))=ta$.

For $q\in\mathcal U$, the identity $|N(q)|=1$ gives
$|V_a(q)|^2
 \leq {|a|^2}/{N_n(q)^2}
 \leq 4|a|^2$. From $\norm{\II_S}\le 1$, 
we have
$|dN_q(v)|\leq\norm{\II_S(q)} |v|\leq|v|$
for every $v\in T_qS$.
Therefore, for every $t\in [0,T_a)$,
$$
 |q_a'(t)|\leq2|a|\quad \text{and}
 \quad
 \left| dN(q_a(t))/dt\right|
 \leq2|a|.
$$
Integrating from zero, we obtain, for
$0\leq t<\min\{T_a,1\}$,
$$
 |q_a(t)|\leq2t|a|<1/4,\quad
 |N(q_a(t))-e_n|\leq2t|a|<1/4,\quad\text{and}\quad 
 N_n(q_a(t))>3/4.
$$

We next prove $T_a>1$. If not, then the above bounds on $q_a$ and $q_a'$
imply that $q_a(t)$ has a limit
$q_*\in\overline B_{1/4}$ as $t\uparrow T_a$.
Since $S\cap B_1$ is relatively closed in $B_1$,
we have $q_*\in S\cap B_1$.
By continuity of the normal projections,
$$
 |\Pi_{q_*}^\perp e_n|
 =\lim_{t\uparrow T_a}|\Pi_{q_a(t)}^\perp e_n|
 =\lim_{t\uparrow T_a}N_n(q_a(t))
 \geq3/4.
$$
Thus $q_*\in\mathcal U$.
Then the solution $q_a(t)$ extends past $T_a$,
contradicting its maximality.

We have proved $T_a>1$.
In particular, $q_a(1)$ exists for every $|a|<1/8$, and satisfies
$$
\pi(q_a(1))=a,\quad
|q_a(1)|<1/4,\quad\text{and}\quad 
|N(q_a(1))-e_n|<1/4.
$$

Define $\phi$ on $B_{1/8}\subset\R^{n-1}$ by
$$
 q_a(1)=(a,\phi(a)),\quad |a|<1/8.
$$
Then the graph of $\phi$ is contained in $S\cap B_{1/4}$.

We next check that $\phi$ is $C^2$.
By continuous dependence of solutions of ODEs on parameters, the map
$a\mapsto q_a(1)$ is continuous.
Fix $a_0\in B_{1/8}$ and set $q_0=q_{a_0}(1)$.
Since $S$ is a $C^2$ hypersurface, there are an open neighborhood
$U$ of $q_0$ and a function $f\in C^2(U)$ such that
$S\cap U=\{x\in U:f(x)=0\}$ and
$\nabla f(q_0)\neq 0$.
The vector $\nabla f(q_0)$ is normal to $S$ at $q_0$ and hence
is a nonzero multiple of $N(q_0)$.
Since $N_n(q_0)>3/4$, we have $\partial_n f(q_0)\neq0$.
The implicit function theorem therefore gives an open
neighborhood $V$ of $a_0$, an open interval $I$ containing
$\phi(a_0)$, and a function $g\in C^2(V)$ such that
$S\cap(V\times I)=\{(a,g(a)):a\in V\}$.
By continuity of $a\mapsto q_a(1)$, for all $a$ sufficiently
close to $a_0$ we have
$(a,\phi(a))=q_a(1)\in S\cap(V\times I)$.
Thus $\phi(a)=g(a)$ near $a_0$.
Since $a_0$ was arbitrary, $\phi\in C^2(B_{1/8})$.

Finally, since $N(q_a(1))$ is normal to the graph, we have
$D\phi(a)=- {N'(q_a(1))}/{N_n(q_a(1))}$.
Since $N_n(q_a(1))>3/4$ and $|N'(q_a(1))|<1/4$, we obtain
$$
 |D\phi(a)|
 \leq 1/3,\quad a\in B_{1/8}\subset \R^{n-1}.
$$
For $a=0$, we have $q_0(t)\equiv 0$, and thus $\phi(0)=0$. By $N(0)=e_n$, we have $D\phi(0)=0$.
The claim is proved.
\end{proof}

\subsection{Proof of Theorem \ref{thm:main}}\label{sec:main-proof}

We first define the derivative and seminorm in
\eqref{eq:field-bound}.
Let $S$ be a $C^2$ hypersurface in $\R^n$, and let  $\nu\in C^1(S;\R^n)$. For $x\in S$, let
$\Pi_x:\R^n\to T_xS$ be the Euclidean orthogonal projection.
For $\xi\in\R^n$, define
$$
 D_S\nu(x)\xi
 :=\left.\frac{d}{dt}\nu(\sigma(t))\right|_{t=0},
$$
where $\sigma$ is any $C^1$ curve in $S$ satisfying  $\sigma(0)=x$ and $\sigma'(0)=\Pi_x\xi$. One can show that $D_S\nu(x)$ is
independent of $\sigma$ and is linear in $\xi$.
Thus $D_S\nu(x)$ is a linear map from $\R^n$ to $\R^n$.

For $E\subset S$ and $0<\gamma\leq1$, define 
$$
 [D_S\nu]_{\gamma;E}
 :=\sup_{\substack{x,y\in E\\x\ne y}}
 \frac{\norm{D_S\nu(x)-D_S\nu(y)}}{|x-y|^\gamma}.
$$

\begin{proof}[Proof of Theorem~\ref{thm:main}]
We may assume $0<\norm u_{L^\infty(B_1)}<\infty$.
Replacing $u$ by $u/\norm{u}_{L^\infty(B_1)}$ and replacing $F(M)$ by
$F(\norm u_{L^\infty(B_1)}M)/\norm u_{L^\infty(B_1)}$,
we may further assume $\norm u_{L^\infty(B_1)}=1$.
For $1\le i\le n-2$, write
$$\mu_i:=\nu_i(0).
$$

\emph{Step 1.
For each $i$, there exists a constant vector $g_i\in\R^n$ with $|g_i|\leq C(1+K)\rho^{-1}$ such that
$$
| \partial_{\mu_i}u(x)
 -\partial_{\mu_i}u(0)- g_i\cdot x |
 \leq C(1+K)\rho^{-1-\eta}|x|^{1+\eta}
 \quad\text{for }x\in B_{c_b\rho/8},
$$
where $\eta\in(0,\gamma)$, $c_b\in(0,1/4)$, and $C>0$
depend only on $n,\lambda,\Lambda,\gamma$.
}

Fix any $1\le i\le n-2$.
Let $$r_0:=\rho/8, \quad a:=Du(0), \quad  \text{and} \quad A_i:=D_{S_i}\nu_i(0).$$
By Lemma~\ref{lem:curvature-graphs}, a graphical representation of $S_i$
near zero, in orthonormal coordinates over $T_0S_i$,
takes the form $(x',\phi_i(x'))$ for $|x'|<r_0$ satisfying
\begin{equation}\label{graph-bounds}
 \phi_i(0)=0,\quad D\phi_i(0)=0,\quad \text{and}\quad 
 \norm{D^2\phi_i(x')}\leq 3/\rho\quad \text{for }|x'|<r_0.
\end{equation}

For a point $x=(x',\phi_i(x'))$ on this graph, consider
$\Gamma(t):=(tx',\phi_i(tx'))$, $0\leq t\leq1$.
Using \eqref{graph-bounds}, we obtain
$|\Gamma(t)|\leq2t|x|$ and $|\Gamma'(t)|\leq2|x|$ for $0\le t\le 1$.
Since $\Gamma'(t)$ is tangent to $S_i$, the definition
of $D_{S_i}\nu_i$ gives
$$
 \nu_i(x)-\mu_i-A_ix
 =\int_0^1
 \bigl(D_{S_i}\nu_i(\Gamma(t))-A_i\bigr)\Gamma'(t)\,dt.
$$
Using \eqref{eq:field-bound} and the bounds for $\Gamma$ and its derivative, we obtain for every $x$ on this graph that
\begin{equation}\label{graph-est}
 |\nu_i(x)-\mu_i-A_ix|
 \leq CK\rho^{-1-\gamma}|x|^{1+\gamma}
 \quad \text{and}\quad 
 |\nu_i(x)-\mu_i|\leq2K\rho^{-1}|x|.
\end{equation}
These estimates hold in the original
coordinates as well.

Define
$$
 w_i(x):=\mu_i\cdot(Du(x)-a)+a\cdot A_ix
 \quad\text{for }x\in B_{1}.
$$

Note that $\mu_i$ is unit. For $h>0$, the difference quotient $\bigl(u(x+h\mu_i)-u(x)\bigr)/{h}$ satisfies
\eqref{eq:Pucci-class} 
 in $B_{1-h}$ by \cite[Proposition 5.5]{Caffarelli1995FullyNE} (Note that our definition of uniform ellipticity uses $\tr N$, not $\|N\|$).
Since $u\in C^1(B_1)$, this difference quotient converges
locally uniformly to $\partial_{\mu_i} u$ in $B_1$. Passing $h$ to zero and using the 
viscosity stability, 
 $w_i$ satisfies the Pucci inequality
\eqref{eq:Pucci-class} in $B_{1}$.

By the interior $C^{1,\gamma_0}$ estimate \cite[Corollary~5.7]{Caffarelli1995FullyNE}, we have
\begin{equation}\label{u-C-1-gamma-est}
 \norm{Du}_{C^{0,\gamma_0}(B_{1/2})}\leq C,
\end{equation}
where $\gamma_0\in(0,1)$ and $C>0$ depend only on
$n,\lambda,\Lambda$.
By \eqref{eq:field-bound}, we have $\norm{A_i}\leq K/\rho$.
From this and \eqref{u-C-1-gamma-est}, 
we obtain
$$\norm{w_i}_{L^\infty(B_{r_0})}\leq C(1+K).$$

For a point $x$ on the graph in $S_i$, the identity
$\nu_i(x)\cdot Du(x)=\mu_i\cdot a$ gives
$$
 w_i(x)=-(\nu_i(x)-\mu_i)\cdot(Du(x)-a)
        -a\cdot\bigl(\nu_i(x)-\mu_i-A_ix\bigr).
$$
Let $\alpha:=\min\{\gamma,\gamma_0\}\in(0,1)$.
Using \eqref{graph-est} and \eqref{u-C-1-gamma-est}, we obtain
$$
 |w_i(x)|
 \leq CK\bigl(\rho^{-1}|x|^{1+\gamma_0}
              +\rho^{-1-\gamma}|x|^{1+\gamma}\bigr)
 \leq CK\rho^{-1-\alpha}|x|^{1+\alpha}.
$$
Here we used $|x|<\rho\leq1$.

Rescaling by $r_0$, the graph bounds \eqref{graph-bounds} give
$$
 |r_0^{-1}\phi_i(r_0x')|
 \leq\frac{3}{16}|x'|^2
 \quad\text{for }|x'|<1/2,
$$
in graph coordinates.
The rescaled $w_i$ restricted to the rescaled graph is bounded by
$CK|x'|^{1+\alpha}$ for $|x'|<1/2$.
Lemma \ref{lem:matching} therefore gives a number $b_i$
with $|b_i|\leq C(1+K)$ such that, in the original coordinates,
$$
 |w_i(x)-(b_i/r_0)n_i\cdot x|
 \leq C(1+K)r_0^{-1-\eta}|x|^{1+\eta}
 \quad\text{for }x\in B_{c_br_0},
$$
where $n_i$ is the unit normal used for the graph coordinate at zero.
Let $g_i:=(b_i/r_0)n_i-A_i^Ta$. Then the desired conclusion of Step 1 follows.

\medskip

For the rest of the proof, let $r_*:=c_b\rho/8$.
The constant $C$ may now depend additionally on an upper
bound for $K$ and positive lower bounds for $\rho$ and $\delta$.

\medskip
\emph{Step 2.
There exists a quadratic polynomial $Q$ such that
$F(D^2Q)=0$,
$$
 |Q(0)|+|DQ(0)|+\norm{D^2Q}\leq C,\quad\text{and}\quad 
 |u(x)-Q(x)|\leq C|x|^{2+\beta}
 \quad\text{for }x\in B_{r_*/4},
$$
where $\beta\in(0,\gamma)$ depends only on
$n,\lambda,\Lambda,\gamma$.
}

Choose $O\in O(n)$ whose last $n-2$ columns
form an orthonormal basis of
$\operatorname{span}\{\mu_1,\dots,\mu_{n-2}\}$.
By the definition of $\delta$, each of these columns is
$\sum_i\zeta_i\mu_i$ for some $\zeta\in \R^{n-2}$ with $|\zeta|\leq\delta^{-1}$.
Let $v(z,t):=u(O(z,t))$, where
$(z,t)\in\R^2\times\R^{n-2}$.
By Step 1, the chain rule yields
\begin{equation}\label{eq:t-gradient}
 D_tv(z,t)=b+Bz+Tt+R(z,t)\quad \text{with}\quad 
 |R(z,t)|\leq C|(z,t)|^{1+\eta}
 \quad\text{for }(z,t)\in B_{r_*},
\end{equation}
where $|b|+\norm B+\norm T\leq C$.
Let
$$
 Q_1(z,t):=b\cdot t+t^TBz+ 2^{-1} t^TTt.
$$
Integrating \eqref{eq:t-gradient} along $(z,st)$ for $0\le s\le 1$ gives
$
 v(z,t)-v(z,0)-Q_1(z,t)
 =\int_0^1 R(z,st)\cdot t\,ds$, and therefore
$$
 |v(z,t)-v(z,0)-Q_1(z,t)|
 \leq C|(z,t)|^{2+\eta} \quad\text{for }(z,t)\in B_{r_*}.
$$

Consider $w:=v-Q_1$ and  $H(M):=F\bigl(O(M+D^2Q_1)O^T\bigr)$ for $M\in \Sym(n)$. Then 
$$H(D^2w)=0\quad \text{and}\quad  |w(z,t)-w(z,0)|\leq C|(z,t)|^{2+\eta}
 \quad\text{for }(z,t)\in B_{r_*}.$$
The operator $H$ has ellipticity constants $\lambda,\Lambda$.
Applying Lemma \ref{lem:planar} to $w(r_*\cdot)$ and
$r_*^2H(r_*^{-2}\cdot)$ and then rescaling back, we obtain a two-variable quadratic polynomial $Q_2=Q_2(z)$
such that
$H(D^2Q_2)=0$ and
$$
 |v(z,t)-Q_1(z,t)-Q_2(z)|
 \leq C|(z,t)|^{2+\beta}
 \quad\text{for }(z,t)\in B_{r_*/4}.
$$
Here the factors involving $r_*^{-1}$ are absorbed into $C$, since $C$ may depend
on $\rho$.
Taking $Q=(Q_1+Q_2)\circ O^T$ proves Step 2.

\medskip
\emph{Step 3: Completion of the proof.}
By Step 2 and $u\in C^1$, we have 
$Q(0)=u(0)$ and $D Q(0)=D u(0)$.
To identify the Hessian, let $h:=u-Q$.
It satisfies $\widetilde{F}(D^2 h)=0$ in $B_1$ with operator
$\widetilde{F}(M):= F(M+D^2Q)$.
For $0<s<r_*/8$, a rescaled version of \cite[Corollary~5.7]{Caffarelli1995FullyNE} and Step 2 imply
$$
 \norm{Dh}_{L^\infty(B_s)}
 \leq Cs^{-1}\norm h_{L^\infty(B_{2s})}
 \leq Cs^{1+\beta}.
$$
Taking $s=2|x|$ yields
$$
 |Du(x)-Du(0)-D^2Qx|
 \leq C|x|^{1+\beta}
 \quad\text{for }0<|x|<r_*/16.
$$
Hence, $Du$ is differentiable at zero and $D^2u(0)=D^2Q$.
Step 2 now gives \eqref{eq:main-coefficients} and
\eqref{eq:main-expansion}, with $c:=c_b/32$.
Theorem \ref{thm:main} is proved.
\end{proof}

\subsection{Proof of Corollary \ref{cor:uniform}}
For every $p\in B_{1/2}$, applying a rescaled version of
Theorem \ref{thm:main} in $B_{1/4}(p)$ implies  that $Du$ is
differentiable at every $p$,
$$
 |Du(p)|+\norm{D^2u(p)}\leq C\norm u_{L^\infty(B_{3/4})} ,\quad\text{and}
 $$
 $$
 |u(x)-Q_p(x)|\leq C\norm u_{L^\infty(B_{3/4})} |x-p|^{2+\beta}
 \quad\text{for }x\in B_{c\rho/4}(p),
$$
where
$Q_p(x):=u(p)+Du(p)\cdot(x-p)+2^{-1}(x-p)^TD^2u(p)(x-p)$. Here, all constants are as stated in the corollary, and in particular, independent of $p$.
Then the desired conclusion follows by a standard argument.

\section{Proofs of Corollary \ref{cor:quadratic} and \ref{cor:dirichlet}}\label{sec:applications}

\subsection{Proof of Corollary~\ref{cor:quadratic}}
By Corollary~\ref{cor:uniform}, it suffices to construct the following
for every $p\in B_{1/2}$ and $1\le j\le n-2$: A $C^2$ hypersurface $S_j$
through $p$ with admissible radius
$\rho:= 1/{(8L^2)}$,
and a vector field
$\nu_j\in C^{1,\gamma}(S_j\cap B_\rho(p);\R^n)$ satisfying
\eqref{eq:field-bound} with
$K:=8L^6$
such that
$|\nu_j(p)|=1$, $\nu_j(x)\cdot Du(x)=0$ for $x\in S_j\cap B_\rho(p)$,
and
$\delta(\nu_1(p),\dots,\nu_{n-2}(p))\geq\delta:=L^{-2}$.

Fix $p\in B_{1/2}$. Let
$$
A(x):=(D\Phi(x))^{-1}\quad\text{for } x\in B_{3/4}.
$$

\emph{Step 1: Construction of $S_j$ and $\nu_j$.}
For each $i$, choose an orthonormal basis of
$$
\{\eta\in\R^{r_i}:DQ_i(X^{(i)}(p))\cdot\eta=0\}.
$$
This space has dimension at least $r_i-1$.
Regard these vectors as vectors in $\R^n$ by placing them
in the coordinates corresponding to $X^{(i)}$ and setting
all other components equal to zero.
Together with the $m$ standard coordinate vectors
corresponding to $Y$, they form a set
of at least
$$ \textstyle
m+\sum_{i=1}^k(r_i-1)=n-d-k\geq n-2
$$
orthonormal vectors.
Choose $n-2$ of them and denote them by
$\xi_1,\dots,\xi_{n-2}$.

We associate an affine hyperplane $L_j$ through $\Phi(p)$
to each $\xi_j$.
Suppose first that $\xi_j$ has its nonzero component
$\eta\in\R^{r_i}$ in the coordinates corresponding to
$X^{(i)}$.
The affine function
$q\mapsto DQ_i(q^{(i)})\cdot\eta$ vanishes at $\Phi(p)$.
If it is not identically zero, set
$$
L_j:=\{q\in\R^n:DQ_i(q^{(i)})\cdot\eta=0\}.
$$
This is an affine hyperplane through $\Phi(p)$.
If that affine function is identically zero, or if $\xi_j$
is a coordinate vector corresponding to $Y$, take
$$
L_j:=\{q\in\R^n:\xi_j\cdot(q-\Phi(p))=0\}.
$$

We observe that, for every $j$ and $s\in \R$, the points $q+s\xi_j$ and $q-s\xi_j$
give the same arguments in $U$ whenever $q\in L_j$.
Indeed, for $L_j$ arising from $Q_i$, the quadratic identity
$$
Q_i(q^{(i)}+s\eta)-Q_i(q^{(i)}-s\eta)
=2s\,DQ_i(q^{(i)})\cdot\eta=0
$$
holds for every $q\in L_j$ and $s\in\R$.
All other arguments of $U$ are unchanged by adding or
subtracting $s\xi_j$.
For $L_j$ corresponding to $Y$, every argument of
$U$ is clearly unchanged.

For every $1\le j\le n-2$, we may rewrite
$$
L_j=\{q\in\R^n:\ell_j\cdot(q-\Phi(p))=0\}
$$
for some $|\ell_j|=1$. Define
$$
S_j:=\{x\in B_{3/4}:f_j(x)=0\},
$$
where $f_j(x):=\ell_j\cdot(\Phi(x)-\Phi(p))$.
Since $Df_j=D\Phi^T\ell_j\neq0$, the implicit function
theorem shows that $S_j$ is a $C^{2,\gamma}$ hypersurface
through $p$.
Define
$$
\nu_j(x):=d_j^{-1}{A(x)\xi_j}\quad
\text{for }
x\in B_{3/4},
$$
where $d_j:=|A(p)\xi_j|$.
In particular, $|\nu_j(p)|=1$.

Fix $x\in S_j$ and choose a local inverse $\Psi$ of $\Phi$
near $\Phi(x)$, with $\Psi(\Phi(x))=x$.
For sufficiently small $|s|$, our previous observation gives
$$
u\bigl(\Psi(\Phi(x)+s\xi_j)\bigr)
=
u\bigl(\Psi(\Phi(x)-s\xi_j)\bigr).
$$
Since $D\Psi(\Phi(x))=A(x)$, differentiation at $s=0$
gives
$2Du(x)\cdot A(x)\xi_j=0$.
Consequently, $\nu_j\cdot Du=0$ on $S_j$.

\medskip
\emph{Step 2: Verification of the uniform bounds.}
The assumption on $\Phi$ gives
$$
|Df_j|=|D\Phi^T\ell_j|\geq L^{-1}\quad \text{and}
\quad
\norm{D^2f_j}\leq L \quad \text{on }B_{3/4}.
$$
For $x\in S_j$ and $v\in T_xS_j$, the level set formula
for the second fundamental form yields
$$
|\II_{S_j}(x)(v,v)|
={|D^2f_j(x)(v,v)|}/{|Df_j(x)|}
\leq L^2|v|^2.
$$
Also, $S_j$ is relatively closed in $B_{3/4}$.
Since $B_\rho(p)\subset B_{3/4}$ and $\rho L^2=1/8$,
the radius $\rho$ is admissible at $p$ for every $S_j$.

The bounds on $D\Phi$ and $A$ imply
$L^{-1}\leq d_j\leq L$.
For every $a\in\R^{n-2}$, orthonormality of the $\xi_j$
therefore gives
$$ \textstyle
|\sum_j a_j\nu_j(p)|
=|A(p)\sum_j\frac{a_j}{d_j}\xi_j|
\geq L^{-1}
 (\sum_j\frac{|a_j|^2}{d_j^2})^{1/2}
\geq L^{-2}|a|.$$
Thus the independence number of $\nu_1(p),\dots, \nu_{n-2}(p)$ is at least $\delta=L^{-2}$.

We finally verify \eqref{eq:field-bound}.
For every $x,y \in B_{3/4}$, we have
$$
A(x)-A(y)=A(x)\bigl(D\Phi(y)-D\Phi(x)\bigr)A(y),
$$
and differentiation of $D\Phi(x)A(x)=I$ gives
$$
DA(x)[h]
=-A(x)D^2\Phi(x)[h,\cdot]A(x).
$$
Hence, for $x,y\in B_{3/4}$,
$\norm{A(x)-A(y)}\leq L^3|x-y|$ and $\norm{DA(x)}\leq L^3$.
From these, we obtain
$$
\norm{DA(x)-DA(y)}
\leq2L^5|x-y|+L^3|x-y|^\gamma,\quad x,y\in B_{3/4}.
$$
Since $\nu_j=A\xi_j/d_j$ and $d_j\geq L^{-1}$, it follows that
$$\norm{D\nu_j(x)}\leq L^4 \quad \text{and}\quad
\norm{D\nu_j(x)-D\nu_j(y)}
\leq 2L^6|x-y|+L^4|x-y|^\gamma, \quad x,y\in B_{3/4}.$$
In particular, $\nu_j\in C^{1,\gamma}(B_{3/4};\R^n)$.

On $B_{3/4}$, let
$$
\Pi_j:=I- {|Df_j|^{-2}} {Df_j\otimes Df_j}.
$$
For $x\in S_j$, this is the orthogonal projection onto $T_x S_j$.
Differentiating this gives
$
\norm{D\Pi_j}
\leq4 {|Df_j|}^{-1} {\norm{D^2f_j}} 
\leq4L^2$,
and therefore
$$
\norm{\Pi_j(x)-\Pi_j(y)}\leq4L^2|x-y|,\quad x,y\in B_{3/4}.
$$
By the definition of $D_S \nu$, we have
$$
D_{S_j}\nu_j=D\nu_j\,\Pi_j\quad\text{on }S_j.
$$
Thus $\norm{D_{S_j}\nu_j}\leq L^4$ on $S_j$, and for
$x,y\in S_j\cap B_\rho(p)$,
$$
\begin{aligned}
\norm{D_{S_j}\nu_j(x)-D_{S_j}\nu_j(y)}
&\leq \norm{D\nu_j(x)-D\nu_j(y)}
 +\norm{D\nu_j(y)}\,\norm{\Pi_j(x)-\Pi_j(y)}\\
&\leq6L^6|x-y|+L^4|x-y|^\gamma \leq7L^6|x-y|^\gamma.
\end{aligned}
$$
The last inequality uses $|x-y|<2\rho\leq1$ and $L\geq1$.
Consequently,
$$
\rho\norm{D_{S_j}\nu_j}_{L^\infty(S_j\cap B_\rho(p))}
+\rho^{1+\gamma}
 [D_{S_j}\nu_j]_{\gamma;S_j\cap B_\rho(p)} \leq \rho L^4+7\rho^{1+\gamma}L^6
\leq8L^6=K.
$$
This proves \eqref{eq:field-bound} with the asserted $K$, and thus the proof is completed.

\subsection{Proof of Corollary~\ref{cor:dirichlet}}
Let $u\in C^0(\overline\Omega)$ be the unique viscosity
solution of \eqref{eq:dirichlet}.

It follows from the uniqueness and the invariance of $(\Omega,F,g)$ under the group $\mathcal G$ that $u$ takes the form
$$
 u(x)=U(z,|x^{(1)}|^2,\dots,|x^{(k)}|^2),\quad x\in \overline\Omega,
$$
for some function $U$ defined on the set of attained arguments. Then an application of Corollary \ref{cor:quadratic} yields $u\in C^{2,\beta}_{\mathrm{loc}}(\Omega)$ and, for every open set $\Omega'\Subset\Omega$, 
$$
\norm{u}_{C^{2,\beta}(\Omega')}\le C\norm{u}_{L^\infty(\Omega)}.
$$
The asserted right hand side follows from a standard barrier argument by using 
$$
 b_\pm(x):=\pm\bigl(\norm{g}_\infty+A(R_0^2-|x-x_0|^2)\bigr),\quad x\in \Omega,
$$
where $x_0\in \Omega$, $R_0:=\operatorname{diam}\Omega$, and $A:={|F(0)|}/{(2n\lambda)}$.

\section{Proof of Theorem \ref{thm:trace-cubic} and Remark \ref{rem:parameters}}\label{sec:examples}

Our proof of Theorem \ref{thm:trace-cubic} needs the following inequalities for traceless matrices. Among them, inequality (i) is standard. Inequalities (ii) and (iii) might be known but we cannot locate them in the literature.
\begin{lemma}\label{lem-5-1}
For $m\ge 3$, let $A,B\in \Sym_0(m)$, and let $s$ be the sum of the first and second largest eigenvalues of $A$. Then the following inequalities hold.
\begin{enumerate}[label=\textup{(\roman*)}]
    \item 
 $\norm B^2\leq\frac{m-1}{m}\norm B_F^2$. 
 
\item
$\tr(AB^2)\leq s\norm B_F^2$.

\item
 $\tr(AB^2)\leq\frac{m-1}{m}s\norm B_F^2$
if $\tr(AB)=0$.
\end{enumerate}
\end{lemma}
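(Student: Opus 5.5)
The plan is to diagonalize $A$ and reduce all three inequalities to elementary estimates on eigenvalues and diagonal entries. Write $A=O\diag(a_1,\dots,a_m)O^T$ with $a_1\ge a_2\ge\dots\ge a_m$ and $\sum a_i=0$, so $s=a_1+a_2$. Since the Frobenius norm and the traces involved are conjugation invariant, we may assume $A$ is diagonal. Then
$$
\textstyle\tr(AB^2)=\sum_i a_ip_i,\qquad p_i:=(B^2)_{ii}=\sum_j B_{ij}^2\ge 0,\qquad \sum_i p_i=\norm B_F^2=:\Phi .
$$

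For (i), let $b_1,\dots,b_m$ be the eigenvalues of $B$, with $|b_1|=\norm B$. From $\sum b_i=0$ and Cauchy--Schwarz, $b_1^2=(\sum_{i\ge2}b_i)^2\le(m-1)\sum_{i\ge2}b_i^2$. Hence $\Phi\ge b_1^2+b_1^2/(m-1)=\tfrac{m}{m-1}b_1^2$, which is (i).

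For (ii), the key input is $p_i\le\norm{B^2}=\norm B^2\le\tfrac{m-1}{m}\Phi$ by (i). Since $a_1$ is the largest coefficient and $a_2$ the next, we get $\sum a_ip_i\le a_1p_1+a_2(\Phi-p_1)$, which is linear in $p_1\in[0,\Phi]$.
\begin{itemize}
\item If $a_2\ge0$, the bound is at most $a_1\Phi\le s\Phi$.
\item If $a_2<0$, then all $a_i<0$ for $i\ge2$, and maximizing over $p_1\le\tfrac{m-1}{m}\Phi$ gives $\tfrac{\Phi}{m}\bigl((m-1)a_1+a_2\bigr)$. This is $\le s\Phi$ iff $a_1\ge(m-1)|a_2|$. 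That holds because $a_1=\sum_{i\ge2}|a_i|$ and $|a_i|\ge|a_2|$ for $i\ge2$.
\end{itemize}

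For (iii), in the case $a_2\ge0$ the same computation already gives
$$
\sum_i a_ip_i\le\tfrac{m-1}{m}a_1\Phi+\tfrac1m a_2\Phi\le\tfrac{m-1}{m}(a_1+a_2)\Phi,
$$
without using orthogonality. The main obstacle is the case $a_2<0$, where the extremal configuration for (ii) must be excluded by the constraint $\tr(AB)=\sum a_iB_{ii}=0$. There the goal is the sharper bound
$$
p_1\le \tfrac{m-1}{m}\Phi+\tfrac{(m-1)a_2}{m(a_1-a_2)}\Phi ,
$$
which is exactly what the linear bound $a_1p_1+a_2(\Phi-p_1)\le\tfrac{m-1}{m}s\Phi$ requires. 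I would first try to obtain it from $p_1=B_{11}^2+\sum_{j\ne1}B_{1j}^2$ together with three facts: the constraint $a_1B_{11}=\sum_{i\ge2}|a_i|B_{ii}$; the tracelessness $\sum_iB_{ii}=0$; and $\sum_{i\ge2}B_{ii}^2\le\Phi-B_{11}^2-2\sum_{j\ne1}B_{1j}^2$, where the factor $2$ comes from the symmetry of $B$ counting each off-diagonal entry of the first row twice.

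Concretely, I would use Cauchy--Schwarz on these constraints to bound $B_{11}^2$ in terms of $\sum_{i\ge2}B_{ii}^2$, weighted by the $|a_i|$. I would then maximize the resulting quadratic expression in $B_{11}$ and $\sum_{j\ne1}B_{1j}^2$. Checking that this optimization yields the displayed bound, with equality structure consistent with $a_1=\sum_{i\ge2}|a_i|$, is the step I expect to require the most care. If the direct bound on $p_1$ proves too lossy, the fallback is to bound $\sum_i a_ip_i$ directly using $a_i\le a_2$ for $i\ge2$ combined with the same constraints.
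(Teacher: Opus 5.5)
Your arguments for (i), (ii), and the case $a_2\ge 0$ of (iii) are correct and are essentially the paper's. The paper bounds $\tr(AB^2)\le a_2\Phi+(a_1-a_2)p_1\le (s-\theta/m)\Phi$ with $\theta=a_1+(m-1)a_2\ge0$, and its case $\theta\ge s$ is exactly your case $a_2\ge 0$.

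\textbf{The gap.} It lies in the case $a_2<0$ of (iii), the heart of that part, which you leave as an unexecuted optimization aimed at a wrong target. Solving $a_2\Phi+(a_1-a_2)p_1\le\frac{m-1}{m}s\Phi$ for $p_1$ gives
\[
p_1\le\tfrac{m-1}{m}\Phi+\tfrac{(m-2)a_2}{m(a_1-a_2)}\Phi ,
\]
not the bound with $(m-1)a_2$ that you display. Your displayed bound is strictly stronger and is false for $m=3$. Take $A=\diag(2,-1,-1)$ and let $B$ have $B_{12}=B_{21}=1$ with all other entries zero. Then $\tr(AB)=0$, $\Phi=\norm B_F^2=2$ and $p_1=(B^2)_{11}=1$, but your bound requires $p_1\le 8/9$. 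So the step you flag as needing ``the most care'' cannot succeed as stated.

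\textbf{How to close it.} The ingredients you list do suffice, and no quadratic optimization is needed. When $a_2<0$, all $a_i<0$ for $i\ge2$ and $\sum_{i\ge2}|a_i|=a_1>0$, so $\sum_{i\ge2}a_i^2\le a_1^2$. Cauchy--Schwarz applied to $a_1B_{11}=\sum_{i\ge2}|a_i|B_{ii}$ then gives $B_{11}^2\le\sum_{i\ge2}B_{ii}^2$. Combined with your symmetric-counting bound, this yields $2p_1=2B_{11}^2+2\sum_{j\ge2}B_{1j}^2\le B_{11}^2+\sum_{i\ge2}B_{ii}^2+2\sum_{j\ge2}B_{1j}^2\le\Phi$. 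Hence
\[
\tr(AB^2)\le a_2\Phi+(a_1-a_2)\Phi/2=s\Phi/2\le\tfrac{m-1}{m}s\Phi .
\]
The bound $p_1\le\Phi/2$ implies the correct requirement above, since $\frac{(m-1)a_1-a_2}{m(a_1-a_2)}\ge\frac12$ is equivalent to $(m-2)s\ge0$; your example shows $\Phi/2$ is attained.

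The paper reaches the same inequality $B_{11}^2\le\sum_{i\ge2}B_{ii}^2$ slightly differently. It combines $\tr(AB)=0$ with $\tr B=0$ into $(a_1-a_2)B_{11}=\sum_{j\ge2}(a_2-a_j)B_{jj}$, whose nonnegative weights sum to $\theta<s<a_1-a_2$. Your $|a_i|$-weighted version does not even need $\tr B=0$ at this step.
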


Inequality \textup{(ii)} is optimal: equality holds for
$A=B=\diag\{ m-1,-1,\dots,-1\}$.  We do not pursue an optimal coefficient in
\textup{(iii)}, since its present form suffices for our purposes.

\begin{proof}
For \textup{(i)}, let $\mu$ be an eigenvalue of $B$ with $|\mu|=\norm B$,
and denote the others by $\mu_2,\ldots,\mu_m$.
Since $\tr B=0$, Cauchy--Schwarz gives
$$ \textstyle
 \mu^2=(\sum_{j=2}^m\mu_j )^2
 \leq(m-1)\sum_{j=2}^m\mu_j^2
 =(m-1)(\norm B_F^2-\mu^2).
$$
Then \textup{(i)} follows.
For
\textup{(ii)} and \textup{(iii)}, by an orthogonal conjugation,
we may assume $A=\diag\{ a_1,\dots,a_m\}$ with $a_1\geq\cdots\geq a_m$.
Then $s=a_1+a_2\ge 0$.
Let $d:=a_1-a_2\ge 0$ and $\theta:= a_1+(m-1)a_2\geq 0$. The nonnegativity of $\theta$ is from $\tr A=0$. Write $B=(b_{ij})$.  By \textup{(i)},
\begin{equation}\label{eq:trace-first-bound} \textstyle
 \tr(AB^2)
 \leq a_2\norm B_F^2+d \sum_{j=1}^m b_{1j}^2
 \leq\left(s-{\theta}/{m}\right)\norm B_F^2.
\end{equation}
This proves \textup{(ii)}, and also proves \textup{(iii)} when
$\theta\ge s$.

Suppose now $\theta < s$ and $\tr(AB)=0$. Then we must have $a_2 < 0$, and thus $s<d$.
Since $AB$ and $B$ are both traceless, we obtain
$$  \textstyle
  b_{11}d=\sum_{j=2}^m(a_2-a_j)b_{jj}.
$$
Since $\sum_{j=2}^m (a_2-a_j)=\theta$ and its every summand is nonnegative, Cauchy--Schwarz yields
$b_{11}^2 d^2\leq\theta^2\sum_{j=2}^m b_{jj}^2$, and therefore
$b_{11}^2\leq\sum_{j=2}^m b_{jj}^2$. This implies 
$2\sum_{j=1}^m b_{1j}^2
 \leq 
 \norm B_F^2$.
The first inequality in \eqref{eq:trace-first-bound} now gives
$\tr(AB^2) \le
s\norm B_F^2/2$. Hence, \textup{(iii)} is proved.
\end{proof}

Now we are ready to give the following:
\begin{proof}[Proof of Theorem \ref{thm:trace-cubic}]
Fix $m\geq 3$. For convenience, we write $p(X):=\tr(X^3)$, and omit all the subscripts indicating dependence on $m$. For example, we write $d=d_m$. 

We differentiate on $\Sym_0(m)$.
For $X=\iota(x)$, $U=\iota(u)$ and $V=\iota(v)$ with $x,u,v\in \R^d$, the differential of $W$ is the linear form
$$
 DW(X)[U]:=\left.\frac{d}{dt}W(X+tU)\right|_{t=0}
 =Dw(x)\cdot u.
$$
For $X\ne0$, the Hessian of $W$ is the symmetric bilinear form
$$
 D^2W(X)[U,V]
 :=\left.\frac{\partial^2}{\partial s\,\partial t}
 W(X+sU+tV)\right|_{s=t=0}
 =u^TD^2w(x)v.
$$
It defines a self-adjoint operator $H(X)$ on $\Sym_0(m)$
by $\langle H(X)U,V\rangle=D^2W(X)[U,V]$.
Its matrix in the orthonormal basis $E_1,\ldots,E_d$ is exactly $D^2w(x)$.
We use $\tr$ for the trace of an $m\times m$ matrix and
$\operatorname{Tr}$ for the trace of an operator on $\Sym_0(m)$;
in particular, $\operatorname{Tr}H(X)=\Delta w(x)$.

\medskip

\emph{Step 1. We prove that for $X, Y\in \Sym_0(m)$ with $\norm{X}_F=\norm{Y}_F=1$, 
\begin{equation}\label{eq:trace-hessian}
 \langle H(X)Y,Y\rangle=6\tr(XY^2)
 -6\tr(X^2Y)\tr(XY)
 +3p(X)(\tr(XY))^2-p(X).
\end{equation}
As a consequence, 
$\operatorname{Tr}H(X)=-(3+d)p(X)$. }

Indeed, a direct computation gives
$$
 Dp(X)[Y]=3\tr(X^2Y)\quad\text{and}\quad 
 D^2p(X)[Y,Y]=6\tr(XY^2).
$$
Writing $r=r(X):=\norm X_F$, we also have
$$
 D(r^{-1})[Y]=-r^{-3}\tr(XY)\quad\text{and}\quad 
 D^2(r^{-1})[Y,Y]=3r^{-5}(\tr(XY))^2-r^{-3}\norm Y_F^2.
$$
Then \eqref{eq:trace-hessian} follows from  the above and the product rule.

Next we compute $\operatorname{Tr}H(X)$.
Let $E_1,\dots,E_d$ be an orthonormal basis of
$\Sym_0(m)$. Using \eqref{eq:trace-hessian}, we obtain
\begin{align*}
    \textstyle
 \operatorname{Tr}H(X)
 &=\textstyle\sum_{i=1}^d\langle H(X)E_i,E_i\rangle\\  
 &=\textstyle 6\tr\left(X S\right)
   -6\tr\left(X^2\sum_{i=1}^d\tr(XE_i)E_i\right)+(3-d) p(X)  \\
&=\textstyle 6\tr\left(X S\right)
   -(3+d) p(X),
\end{align*}
where $S:=\sum_{i=1}^d E_i^2$.
Since $\tr X=0$, it remains to show that
$S$ is a scalar matrix. 
Indeed, $S$ is independent of the choice of orthonormal basis.
For every $Q\in O(m)$, the matrices $QE_iQ^T$ also form an
orthonormal basis of $\Sym_0(m)$, so $QSQ^T=S$.
Thus $S$ is scalar.

\medskip

\emph{Step 2.
For $X,Y\in\Sym_0(m)$ with $\norm{X}_F=\norm{Y}_F=1$,
we prove
$$
\kappa\lambda_{\max}(H(X)-H(Y))\geq \left|\operatorname{Tr}(H(X)-H(Y)) \right|,
$$
where $\kappa:=(m+3)(d+3)>0$, and $\lambda_{\max}$ denotes the largest eigenvalue. }

If  $\operatorname{Tr}(H(X)-H(Y)) \geq 0$, then the desired inequality is trivial. By Step 1, we may assume 
$$2 q :=p(X)-p(Y)= - (3+d)^{-1} \operatorname{Tr}(H(X)-H(Y))  >0,$$
and then the desired inequality is equivalent to
$$
(m+3)\lambda_{\max}(H(X)-H(Y))\geq 2q .
$$

Let
$$A=(X-Y)/2 \quad  \text{and}\quad  B=(X+Y)/2. $$
Then $\tr(AB)=0$ and
$\norm{A}_F^2+\norm{B}_F^2=1$.
Since orthogonal conjugation $X\mapsto OXO^T$ is an orthogonal transformation of $\Sym_0(m)$
and preserves $W$,
we may assume 
$A=\diag\{ a_1,\ldots,a_m\}$, where $a_1\geq\cdots\geq a_m$.
By $\tr A=0$ and $m\ge 3$, we have $s:=a_1+a_2\ge 0$.
By a direct computation and
Lemma \ref{lem-5-1}\textup{(ii)-(iii)}, we obtain
$$
q
=3\tr(AB^2)+\tr(A^3)
\leq s\left(
{3m^{-1}(m-1)}\norm{B}_F^2+\norm{A}_F^2
\right)
\leq {3m^{-1}(m-1)} s.
$$
This gives $3s-q\geq q/(m-1)$.

First take test matrix $U$ with entries $U_{12}=U_{21}=1/\sqrt2$
and all other entries zero.
Applying \eqref{eq:trace-hessian} (with $Y=U$ there) yields
\begin{equation}\label{eq:trace-first-test}
2^{-1} \lambda_{\max}(H(X)-H(Y))
\geq3s-q + 6(q-2s)B_{12}^2.
\end{equation}
If $q\geq2s$, the right side is at least
$3s-q\geq q/(m-1)$, which implies the desired inequality.
Hence, we may assume $0<q<2s$ in the following.

Next choose $U\in\Sym_0(m)$, whose entries
outside the left upper $2\times2$ block vanish, satisfying $\norm{U}_F=1$ and $\tr(BU)=0$.
Using $a_1+(m-1)a_2\ge0$ and
Lemma~\ref{lem-5-1}\textup{(i)}, we obtain 
\begin{gather*}
    \sqrt{2}|\tr(AU)|
\leq {a_1-a_2}
\leq {m}{(m-2)^{-1}}s
\leq 3 s, \quad \text{and}\\
4 (\tr(AU))^2
\leq 2 (m-1)^{-2}{m^2a_1^2}
\leq 2 (m-1)^{-1} {m}  \norm{A}_F^2
\leq 3 \norm{A}_F^2.
\end{gather*}
In the expansion of $\tr(B^2U)$, the terms with all $B_{ij}$
indices $i,j\in \{1,2\}$ sum to
$(B_{11}+B_{22})\tr(BU)=0$.
Therefore, 
$$ \textstyle
2\sqrt{2} |\tr(B^2U)|
\leq 2\sum_{j=3}^m(B_{1j}^2+B_{2j}^2)
\leq {\norm{B}_F^2-2B_{12}^2}.
$$
Since the left upper $2\times 2$ block of $U^2$ is $I_2/2$, we have $\tr(AU^2)=s/2$.
An application of \eqref{eq:trace-hessian}, using
$\tr(A^2U)=s\tr(AU)$ 
and dropping the nonnegative term $3q(\tr(AU))^2$, yields
\begin{equation}\label{eq:trace-second-test}
\begin{aligned}
2^{-1}\lambda_{\max}(H(X)-H(Y))
&\geq3s-q-6|\tr(AU)\tr(B^2U)|
             -6s(\tr(AU))^2\\
&\geq3s-q- 9s(2^{-1}-B_{12}^2).
\end{aligned}
\end{equation}
Here the last inequality uses
$\norm{A}_F^2+\norm{B}_F^2=1$.

Multiply \eqref{eq:trace-first-test} by $3s/2$ and
\eqref{eq:trace-second-test} by $2s-q>0$, and add them up.
The terms containing $B_{12}^2$ cancel, yielding
$$
2^{-1}\lambda_{\max}(H(X)-H(Y))
\geq
\frac{(q-s)^2+s^2/2}{7s/2-q}
\geq\frac q6
\geq\frac{q}{m+3}.
$$
The middle inequality can be verified by a direct expansion. 
Step 2 is completed.

\medskip

\emph{Step 3. Construction of the operator $F$ and completion of the proof.}

Let $\mathcal K\subset \Sym(d)$ be defined by
$$
\mathcal K:=\{H(X)\mid X\in \Sym_0(m),~ \norm{X}_F=1\}.
$$
Here $H(X)\in \Sym(d)$ is its matrix representation under $\iota$. Hence, $\mathcal K=\{D^2 w(x)\mid x\in \R^d,~ |x|=1\}$.
Define
\begin{equation}\label{eq:trace-operator}
F(M):=\min_{T\in\mathcal K}
\bigl\{\operatorname{Tr}(M-T)
+\kappa\lambda_{\max}(M-T)\bigr\},
\quad M\in\Sym(d).
\end{equation}
The minimum exists since $\mathcal K$ is compact.
Clearly, $F$ satisfies \eqref{eq:ellipticity} with $\lambda=1$ and $\Lambda=1+\kappa$. 

By Step 2, one can easily see 
 $F=0$ on $\mathcal K$. From this and the homogeneity of $w$, we obtain
$$F(D^2w)=0\quad \text{in }\R^d\setminus\{0\}\quad \text{in the pointwise sense.} $$

Now we prove that $w$ is a viscosity solution of $F(D^2 w)=0$ across $0$.
Let $\varphi\in C^2$ touch $w$ from below at $0$.
Choose $r>0$ such that 
$w\geq\varphi$ on $\overline{B_r}$ and $w(0)=\varphi(0)$.
Since $w\in C^1$, we have $D\varphi(0)=Dw(0)=0$.
Fix $\delta>0$ and a unit vector $e\in\R^d$.
For $0<\varepsilon<\delta r$, let
$$
g_\varepsilon(x):=
w(x)-\varphi(x)+\delta|x|^2-\varepsilon e\cdot x.
$$
We have $g_\varepsilon(0)=0$ and
$Dg_\varepsilon(0)=-\varepsilon e$, so the minimum of
$g_\varepsilon$ on $\overline{B_r}$ is negative.
Moreover,
$$
g_\varepsilon(x)\geq\delta|x|^2-\varepsilon|x|.
$$
Consequently, any point $y_\varepsilon$ where this minimum
is attained satisfies
$0<|y_\varepsilon|<{\varepsilon}/{\delta}<r$.
Therefore,
$D^2\varphi(y_\varepsilon)-2\delta I_d
\leq D^2w(y_\varepsilon)$.
By ellipticity and the equation away from $0$,
$$
F\bigl(D^2\varphi(y_\varepsilon)-2\delta I_d\bigr)
\leq F\bigl(D^2w(y_\varepsilon)\bigr)=0.
$$
Letting first $\varepsilon\to0$ and then $\delta\to0$,
we obtain $F(D^2\varphi(0))\leq0$.
This proves $F(D^2 w)\le 0$ near $0$ in the viscosity sense.
A similar argument yields $F(D^2 w)\ge 0$ near $0$. Thus we have proved $F(D^2 w)=0$ in $\R^d$.

Finally, we check that $w$ is not twice differentiable at $0$.
If not, then the homogeneity of $w$ gives
$2w(x)=x^TD^2w(0)x$ for all $x$.
The right side is even, while $w$ is odd. Hence, $w\equiv 0$, a contradiction. Therefore, $w$ is not twice differentiable at $0$. Theorem \ref{thm:trace-cubic} is proved.
\end{proof}

We next prove the assertions in Remark~\ref{rem:parameters}.

\begin{example}\label{ex:dependence}
Fix any $m\geq 3$ and let $n:=d_m+1$. There exist an operator
$F:\Sym(n)\to\R$ satisfying \eqref{eq:ellipticity}
and a sequence $\{u_j\}\subset C^{1,1}(\R^n)$ such that $\sup_{j\geq1}\norm{u_j}_{L^\infty(B_1)}<\infty$,
$$ \textstyle
 F(D^2u_j)=0\quad\hbox{in }\R^n
 \quad\hbox{in the viscosity sense},
$$
and the following two assertions hold. Denote 
$$
 P_j(z):=u_j(0)+Du_j(0)\cdot z
             + 2^{-1} z^TD^2u_j(0)z.
$$
\begin{enumerate}[label=\textup{(\roman*)}]
\item
There exist fixed orthonormal vectors
$\nu_1,\ldots,\nu_{n-2}\in \R^n$, radii
$\rho_j\in(0,1/8]$ with $\rho_j\downarrow 0$, and
smooth hypersurfaces $S_{1,j},\ldots,S_{n-2,j}$ through $0$
such that $\rho_j$ is admissible for $S_{i,j}$ at $0$ and
$\partial_{\nu_i}u_j=\text{constant}$ on $S_{i,j}\cap B_{\rho_j}$,
for every $1\leq i\leq n-2$.
In particular, $\delta(\nu_1,\ldots,\nu_{n-2})=1$.
However, for every $\beta>0$ and $c\in(0,1)$,
$$
 \lim_{j\to\infty}
 \sup_{0<|z|<c\rho_j}
 {|u_j(z)-P_j(z)|}\big/{|z|^{2+\beta}}=+\infty.
$$

\item
There exist smooth hypersurfaces
$T_{1,j},\ldots,T_{n-2,j}$ through $0$ and linearly independent
constant unit vectors $\mu_{1,j},\ldots,\mu_{n-2,j}\in\R^n$
such that the fixed radius $\rho_0=1/8$ is admissible for $T_{i,j}$ at $0$,
$\partial_{\mu_{i,j}}u_j=\text{constant}$ on $T_{i,j}\cap B_{\rho_0}$ for every $1\leq i\leq n-2$,
and
$\delta_j:=\delta(\mu_{1,j},\ldots,\mu_{n-2,j})\to 0^+$.
However, for every $\beta>0$ and $c\in(0,1)$,
$$
 \lim_{j\to\infty}
 \sup_{0<|z|<c\rho_0}
 {|u_j(z)-P_j(z)|}\big/{|z|^{2+\beta}}=+\infty.
$$
\end{enumerate}
\end{example}

\begin{proof}
Choose an orthonormal basis $E_1,\dots,E_{d_m}$ of
$\Sym_0(m)$ with
$$
 E_1=6^{-1/2} {\diag(2,-1,-1,0,\dots,0)}
 \quad \text{and} \quad
 E_2=2^{-1/2}{\diag(0,1,-1,0,\dots,0)}.
$$
 Let $w=w_m$ and $F_m$ be given by
Theorem \ref{thm:trace-cubic} under this basis.
Write $z=(x,y)\in\R^{d_m}\times\R$, denote the standard coordinate
vectors of $\R^{d_m}$ by $e_i$ for $i=1,\dots,d_m$, and set $e_y:=(0,1)\in\R^n$.

By the regularity and homogeneity of $w$, we can choose
$L\geq2$ such that
\begin{equation}\label{eq:W-bounds}
 \begin{aligned}
 |Dw(x)-Dw(\widehat x)|\leq 2^{-1} L|x-\widehat x|,
 \quad x,\widehat x\in\R^{d_m},\quad \text{and}\\
 \norm{D^2w(x)}+|x|\norm{D^3w(x)}\leq 2^{-1} L,\quad 
 x\ne 0.
 \end{aligned}
\end{equation}
Here $\|D^3w(x)\|$ denotes the supremum of
$|\sum_{i,j,k} w_{ijk}(x)\xi_i\eta_j\zeta_k|$
over all unit vectors $\xi,\eta,\zeta\in\R^{d_m}$.
Since $w(0)=0$ and $Dw(0)=0$, the first bound also
gives $|w(x)|\leq L|x|^2/4$ for $x\in \R^{d_m}$.

\medskip

\emph{Construction of $u_j$ and $F$.}
For each $j\ge 1$, let $r_j:=2^{-j}$ and 
$$
 u_j(x,y):=w(x+r_je_1)+2^{-1}L|x|^2+x_1 y\quad \text{for }(x,y)\in \R^n.
$$
Each $u_j$ belongs to $C^{1,1}(\R^n)$. Moreover, $|x+r_je_1|\leq2$ and $|x_1y|\leq1/2$
on $B_1$, so
$$
 \norm{u_j}_{L^\infty(B_1)}
 \leq L+ L/2+1/2\leq2L.
$$
For $M\in\Sym(n)$, let $M_{xx}$ be its upper left
$d_m\times d_m$ block and let $M_{yy}$ be its $(n,n)$ entry. Define
$$
 F(M):=F_m(M_{xx}-LI_{d_m})+M_{yy},\quad \text{for } M\in \Sym(n).
$$
This operator is independent of $j$. Clearly, $F$ has the same ellipticity constants as those of $F_m$.

We verify that $F(D^2 u_j)=0$ in $\R^n$ in the viscosity sense. 
Indeed, let $\phi$ touch $u_j$ from above at $(x_0,y_0)$.
Then
$x\to \phi(x,y_0)- 2^{-1}L|x|^2-x_1y_0$
touches $w(x+r_je_1)$ from above at $x_0$. Hence,
$F_m(D^2_{xx}\phi(x_0,y_0)-LI_{d_m})\geq0$.
Since $u_j(x_0,y)$ is affine in $y$,
$\phi_{yy}(x_0,y_0)\geq0$.
Therefore, $F(D^2\phi(x_0,y_0))\geq 0$. This proves $F(D^2 u_j)\ge 0$ in $\R^n$.
A similar argument yields $F(D^2 u_j)\le 0$, and thus $F(D^2 u_j)=0$ in $\R^n$. 

We next derive a remainder estimate used in proving (i) and (ii). The chosen basis gives
$$
 w(e_1+te_2)=h(t^2),\quad t\in \R,
$$
where 
$$
 h(s):=\frac{1-3s}{\sqrt{6(1+s)}},\quad s\ge 0. $$
Since
$h''(s)>0$ for $s\geq 0$,
we have
$$  \textstyle
 h(t^2)-h(0)-h'(0)t^2
 =\int_0^{t^2}(t^2-s)h''(s)\,ds>0
 \quad \text{for }t>0.
$$
Since $u_j$ and $w(x+r_je_1)$ differ by a quadratic, the homogeneity of $w$ yields that, for every
fixed $t>0$ and $\beta>0$,
\begin{equation}\label{eq:error-scaling}
 \frac{|u_j(r_jte_2,0)-P_j(r_jte_2,0)|}
 {|(r_jte_2,0)|^{2+\beta}}
 =\frac{h(t^2)-h(0)-h'(0)t^2}{t^{2+\beta}}r_j^{-\beta}
 \to +\infty.
\end{equation}

\medskip

\emph{Proof of \textup{(i)}.}
Let
$\rho_j:={r_j}/{8}\downarrow 0$,
 and let
 $$
 \nu_i:=(e_i,0)\quad\text{for }1\leq i\leq n-3,\quad \text{and}
 \quad \nu_{n-2}:=e_y.
$$
Since $\{\nu_i\}$ are orthonormal, we have $\delta(\nu_1,\dots,\nu_{n-2})=1$.

For $1\leq i\leq n-3$, define
 \begin{align*}
 g_{i,j}(x,y)
 &:=\partial_{x_i}u_j(x,y)-\partial_{x_i}u_j(0)\\
 &=\partial_iw(x+r_je_1)-\partial_iw(r_je_1)
   +Lx_i+\delta_{i1}y,
 \end{align*}
where $\delta_{i1}$ is the Kronecker symbol. Let
$$
 S_{i,j}:=\{g_{i,j}=0\}\cap B_{2\rho_j}\quad \text{for }1\le i\le n-3,\quad \text{and}
 \quad
 S_{n-2,j}:=\{x_1=0\}\cap B_{2\rho_j}.
$$
Clearly, $0\in S_{i,j}$ for every $i$. 
On $B_{2\rho_j}$, we have $|x+r_je_1|\geq r_j/2$.
Thus $g_{i,j}$ is smooth there, and \eqref{eq:W-bounds} gives
\[
 \partial_{x_i}g_{i,j}
 =\partial_{ii}w(x+r_je_1)+L\ge  L/2 \quad \text{and}
 \quad
 \norm{D^2g_{i,j}}\leq  L/{r_j}\quad \text{on }B_{2\rho_j}.
\]
The implicit function theorem shows that $S_{i,j}$ is a
smooth hypersurface.
Recall that the second fundamental form of a regular level set $\{g=0\}$,
is, up to sign, the restriction
of $D^2g/|Dg|$ to tangent vectors. From the above, we obtain $ \norm{\II_{S_{i,j}}}\leq {2}/{r_j}$ and thus
$$ 
 \rho_j\sup_{S_{i,j}\cap B_{\rho_j}}
 \norm{\II_{S_{i,j}}}\leq 1/4\quad \text{for }1\le i\le n-3.
$$
This also holds for $i=n-2$, since $S_{n-2,j}$ is flat. By continuity, $S_{i,j}\cap B_{\rho_j}$ is relatively closed in
$B_{\rho_j}$.
Hence, $\rho_j$ is admissible for $S_{i,j}$ at $0$. 

From the definition of $g_{i,j}$ and the identity
$\partial_yu_j=x_1$, we have $\partial u_j /\partial \nu_i=\text{constant}$ on $S_{i,j}$ for every $i$.
For any fixed $\beta>0$ and $c\in(0,1)$, choose $t=c/16$.
Then $|(r_jte_2,0)|=c\rho_j/2$, and
\eqref{eq:error-scaling} gives the asserted divergence in (i). Hence, (i) is proved.

\medskip

\emph{Proof of \textup{(ii)}.}
Let $\rho_0:=1/8$, and let
$$
 \mu_{i,j}:=\frac{\tau_j(e_i,0)+e_y}{\sqrt{1+\tau_j^2}}
 \quad\text{for }1\leq i\leq n-3,\quad \text{and}
 \quad \mu_{n-2,j}:=e_y.
$$
where $ \tau_j:={r_j^2}/{(2L)}\downarrow 0$. Clearly, $\mu_{1,j},\dots, \mu_{n-2,j}$ are linearly independent unit vectors.
Moreover, the definition of the
independence number $\delta_j$ gives
$$
 0<\delta_j\leq
 {|\mu_{1,j}-e_y|}/{\sqrt2}\to 0.
$$
Here we used the coefficients $1/\sqrt2$ and $-1/\sqrt2$
for the first and last directions, with all others zero.

Let
$$
 T_{i,j}:=\{f_{i,j}=0\}\cap B_{2\rho_0}\quad \text{for }1\le i\le n-3,\quad \text{and}  \quad T_{n-2,j}:=\{x_1=0\}\cap B_{2\rho_0},
$$
where  $f_{i,j}(x,y):=x_1+\tau_j g_{i,j}(x,y)$ and $g_{i,j}$ is defined as in the proof of (i).
Clearly, $0\in T_{i,j}$ for every $i$.
In the following, we prove that each $T_{i,j}$ is a smooth hypersurface.
We first check that the zero level set
of $f_{i,j}$ avoids the singularity of $w(x+r_je_1)$.
By \eqref{eq:W-bounds} and $L\geq 2$, we have
$$
 |g_{i,j}(x,y)|
 \leq 2^{-1} L|x|+L|x_i|+|y|\leq2L|(x,y)|.
$$
At every point of $T_{i,j}$, the above gives
$$
 |x_1|=\tau_j|g_{i,j}(x,y)|
 \leq r_j^2|(x,y)|< {r_j^2}/{4},
$$
and hence $|x+r_je_1|\geq x_1+r_j\geq r_j/2$.
Therefore, $f_{i,j}$ is smooth near each point of $T_{i,j}$.
At these points, as in the proof of (i), we have $|Dg_{i,j}|\leq 2L$ and
$\norm{D^2g_{i,j}}\leq L/{r_j}$.
Therefore, 
$$
 |Df_{i,j}|
 \geq1-r_j^2\geq 1/2 \quad \text{and}
 \quad
 \norm{D^2f_{i,j}}\leq {r_j}/{2}\quad \text{near }T_{i,j}.
$$
The implicit function theorem shows that $T_{i,j}$ is a smooth hypersurface.
As in the proof of (i), we have
$\norm{\II_{T_{i,j}}}\leq r_j$, and therefore,
$$
 \rho_0\sup_{T_{i,j}\cap B_{\rho_0}}
 \norm{\II_{T_{i,j}}}\leq\rho_0r_j\leq 1 \quad \text{for }1\le i\le n-3.
$$
This also holds for $i=n-2$, since $T_{n-2,j}$ is flat.
By continuity, $T_{i,j}\cap B_{\rho_0}$ is relatively closed in
$B_{\rho_0}$. Hence, $\rho_0$ is admissible for $T_{i,j}$ at $0$.

A computation yields
$$
 \partial_{\mu_{i,j}}u_j-\partial_{\mu_{i,j}}u_j(0)
 ={f_{i,j}}{({1+\tau_j^2})^{-1/2}}\quad \text{for }1\le i\le n-3.
$$
This and the identity
$\partial_yu_j=x_1$ imply $\partial u_j/\partial \mu_{i,j}=\text{constant}$ on $T_{i,j}$ for every $i$.
Finally, since $\rho_j\leq\rho_0$, the asserted divergence in \textup{(ii)} follows from that in \textup{(i)}.
\end{proof}

\appendix

\section{\texorpdfstring
  {The cubics $\tr(X^3)$ on $\Sym_0(m)$}
  {The cubics tr(X\textasciicircum3) on Sym\textunderscore0(m)}}
\label{app-sec-realizations}

We give explicit isometries $\iota_m: \R^{d_m} \to \Sym_0(m)$ to identify $\tr(X^3)$ with the Cartan cubic $P(x)$ when $m=3$, and to represent the Hsiang cubic by $D(x)$ when $m=4$. The choices of $\iota_m$ are not unique; any other
choices differ by an orthogonal change of variables.

\subsection{The Cartan cubic}
For $x\in\R^5$, take
$$
 \iota_3(x)=\frac1{\sqrt6}
 \begin{pmatrix}
 2x_1&\sqrt3x_3&\sqrt3x_4\\
 \sqrt3x_3&-x_1+\sqrt3x_2&\sqrt3x_5\\
 \sqrt3x_4&\sqrt3x_5&-x_1-\sqrt3x_2
 \end{pmatrix}.
$$
Then $\iota_3$ is a linear isometry onto $\Sym_0(3)$.

We next compute $\tr(\iota_3(x)^3)$. 
A direct computation gives
$\det (\sqrt{6} \iota_3(x))
 =2P(x)$.
For a traceless $3\times3$ symmetric matrix $X$, we have
$\tr(X^3)
=3\det X$. Therefore,
$$ \sqrt{6}\tr(\iota_3(x)^3)=P(x).
$$
It follows that $\sqrt{6} w_3(x)=P(x)/|x|$.

\subsection{The Hsiang cubic}
In \cite[Example 1]{Hsiang1967},
the Hsiang cubic is originally defined as the coefficient $\sigma_3(X)$
of $t$ in $-\det(X-tI_4)$ for $X\in\Sym_0(4)$. Let
$\lambda_1,\ldots,\lambda_4$ be the eigenvalues of $X$. Since their
sum is zero, Newton's identity yields
$$ \textstyle \sigma_3(X):=
 \sum_{i<j<k}\lambda_i\lambda_j\lambda_k
 = 3^{-1}\sum_i\lambda_i^3= 3^{-1}\tr(X^3).
$$

For $x\in\R^9$, take
$$
 \iota_4(x)=\frac12
 \begin{pmatrix}
 x_1+x_5+x_9&x_6-x_8&x_7-x_3&x_2-x_4\\
 x_6-x_8&x_1-x_5-x_9&x_2+x_4&x_3+x_7\\
 x_7-x_3&x_2+x_4&-x_1+x_5-x_9&x_6+x_8\\
 x_2-x_4&x_3+x_7&x_6+x_8&-x_1-x_5+x_9
 \end{pmatrix}.
$$
Then $\iota_4$ is a linear
isometry onto $\Sym_0(4)$.

We next compute $\tr(\iota_4(x)^3)$.  
For every symmetric $4\times4$ matrix $K$,
$$ \textstyle
 \tr(K^3)=\sum_i K_{ii}^3
  +3\sum_{i<j}(K_{ii}+K_{jj})K_{ij}^2
  +6\sum_{i<j<k}K_{ij}K_{jk}K_{ki}.
$$
For $K=2\iota_4(x)$, the three terms on the right hand side are
 \begin{gather*} \textstyle
 \sum_iK_{ii}^3=24x_1x_5x_9,\\ \textstyle
 3\sum_{i<j}(K_{ii}+K_{jj})K_{ij}^2
 =-24(x_1x_6x_8+x_3x_5x_7+x_2x_4x_9), \quad \text{and}   \\  \textstyle
 6\sum_{i<j<k}K_{ij}K_{jk}K_{ki}
 =24(x_2x_6x_7+x_3x_4x_8).
 \end{gather*}
Adding them yields
$$
 \tr(\iota_4(x)^3)=3D(x).
$$
From this, the representation of the Hsiang cubic is exactly $D(x)$ in the above
coordinates.
Also, it follows that $w_4(x)=3 D(x)/|x|$.


\begin{thebibliography}{99}
\bibitem{ASC}
S. Armstrong, L. Silvestre, and C. K. Smart,
\emph{Partial regularity of solutions of fully nonlinear, uniformly elliptic equations},
Comm. Pure Appl. Math. \textbf{65} (2012), 1169--1184.


\bibitem{CC-jmpa}
X. Cabr\'e and L. A. Caffarelli,
\emph{Interior $C^{2,\alpha}$ regularity theory for a class of
nonconvex fully nonlinear elliptic equations},
J. Math. Pures Appl. \textbf{82} (2003), 573--612.

\bibitem{Caffarelli-89}
L. A. Caffarelli,
\emph{Interior a priori estimates for solutions of fully nonlinear equations},
Ann. of Math. (2) \textbf{130} (1989), 189--213.


\bibitem{Caffarelli1995FullyNE}
L. A. Caffarelli and X. Cabr\'e,
\emph{Fully Nonlinear Elliptic Equations},
Amer. Math. Soc. Colloq. Publ., vol.~43,
American Mathematical Society, Providence, RI, 1995.





\bibitem{CS}
L. A. Caffarelli and L. Silvestre,
\emph{On the {E}vans-{K}rylov theorem},
Proc. Amer. Math. Soc. \textbf{138},
(2010), 263--265.


\bibitem{CY}
L. A. Caffarelli and Y. Yuan,
\emph{A priori estimates for solutions of fully nonlinear equations with convex level set},
Indiana Univ. Math. J. \textbf{49},
(2000), 681--695. 

\bibitem{Cartan}
E. Cartan,
\emph{Familles de surfaces isoparam\'etriques dans les espaces \`a{} courbure constante},
Ann. Mat. Pura Appl. \textbf{17},
(1938), 177--191. 


\bibitem{Chu}
B.Z. Chu,
\emph{On singular sets of fully nonlinear uniformly elliptic equations},
\newblock \href{https://arxiv.org/abs/2608.18217v2}{arXiv:2608.18217v2} [math.AP]. 

\bibitem{CIL}
M. G. Crandall, H. Ishii, and P.-L. Lions,
\emph{User's guide to viscosity solutions of second order partial
differential equations},
Bull. Amer. Math. Soc. (N.S.) \textbf{27} (1992), 1--67.


\bibitem{Evans-82}
L. C. Evans,
\emph{Classical solutions of fully nonlinear, convex, second-order elliptic equations}, Comm. Pure Appl. Math. \textbf{35} (1982), 333--363. 

\bibitem{GT}
D. Gilbarg and N. S. Trudinger,
\emph{Elliptic partial differential equations of second order},
2nd ed., Springer-Verlag, Berlin, 1983.


\bibitem{Hsiang1967}
W.-Y. Hsiang,
\emph{Remarks on closed minimal submanifolds in the standard
Riemannian $m$-sphere},
J. Differential Geom. \textbf{1} (1967), 257--267.


\bibitem{Ishii-89}
H. Ishii,
\emph{On uniqueness and existence of viscosity solutions of fully nonlinear second-order elliptic {PDE}s}, Comm. Pure Appl. Math. \textbf{42} (1989), 
15--45. 


\bibitem{Jensen-88}
R. Jensen,
\emph{The maximum principle for viscosity solutions of fully nonlinear second order partial differential equations}, Arch. Rational Mech. Anal. \textbf{101} (1988), 
1--27. 



\bibitem{Kazdan}
J. L. Kazdan,
\emph{Prescribing the curvature of a {R}iemannian manifold},
CBMS Regional Conference Series in Mathematics 57, 1985.

\bibitem{Krylov-82}
N. V. Krylov,
\emph{Boundedly nonhomogeneous elliptic and parabolic equations},
Izv. Akad. Nauk SSSR Ser. Mat. \textbf{46} (1982), 487--523; English transl. in Math. USSR Izv. \textbf{20} (1983), 459--492.


\bibitem{Krylov-83}
{\sc ---}
\emph{Boundedly nonhomogeneous elliptic and parabolic equations in a domain},
Izv. Akad. Nauk SSSR Ser. Mat. \textbf{47} (1983), 75--108; English transl. in Math. USSR Izv. \textbf{22} (1984), 67--97.




\bibitem{NV}
N. Nadirashvili and S. Vl\u{a}du\c{t},
\emph{On axially symmetric solutions of fully nonlinear elliptic equations},
Math. Z. \textbf{270} (2012), 331--336.



\bibitem{NTV}
N.~Nadirashvili, V.~Tkachev, and S.~Vl\u{a}du\c{t},
\emph{A non-classical solution to a Hessian equation from Cartan
isoparametric cubic},
Adv. Math. \textbf{231} (2012), 1589--1597.

\bibitem{NTV-book}
{\sc ---}
\emph{Nonlinear elliptic equations and nonassociative algebras},
Math. Surveys Monogr. 200, American Mathematical Society, Providence, RI, 2014.

\bibitem{Nirenberg}
L. Nirenberg,
\emph{On nonlinear elliptic partial differential equations and
H\"older continuity},
Comm. Pure Appl. Math. \textbf{6} (1953), 103--156.

\bibitem{S}
L. Silvestre,
\emph{Singular solutions to parabolic equations in nondivergence
form},
Ann. Sc. Norm. Super. Pisa Cl. Sci. \textbf{23} (2022), 993--1011.



\bibitem{SS}
L. Silvestre and B. Sirakov,
\emph{Boundary regularity for viscosity solutions of fully nonlinear
elliptic equations},
Comm. Partial Differential Equations \textbf{39} (2014), 1694--1717.


\bibitem{Tkachev2010}
V. Tkachev,
\emph{Minimal cubic cones via Clifford algebras},
Complex Anal. Oper. Theory \textbf{4} (2010), 685--700.

\bibitem{Tkachev2014}
{\sc ---}
\emph{A Jordan algebra approach to the cubic eiconal equation},
J. Algebra \textbf{419} (2014), 34--51.


\bibitem{Tkachev2019}
{\sc ---}
\emph{Spectral properties of nonassociative algebras and breaking
regularity for nonlinear elliptic type PDEs},
St. Petersburg Math. J. \textbf{31} (2020), 223--240.


\bibitem{Trudinger-89}
N. S. Trudinger,
\emph{On regularity and existence of viscosity solutions of nonlinear second order elliptic equations}, Progr. Nonlinear Differential Equations Appl. 2, 939--957, Birkh\"auser, Boston, MA, 1989.

\bibitem{Trudinger-twice-differentiability}
{\sc ---}
\emph{On the twice differentiability of viscosity solutions of nonlinear elliptic equations},
Bull. Austral. Math. Soc. \textbf{39} (1989), 443--447.

\end{thebibliography}
\end{document}